\newtheorem{theorem}{Theorem}[section]
\newtheorem{corollary}[theorem]{Corollary}
\newtheorem{lemma}[theorem]{Lemma}
\newtheorem{proposition}[theorem]{Proposition}
\newtheorem{definition}[theorem]{Definition}
\newtheorem{remark}[theorem]{Remark}
\newtheorem{fact}[theorem]{Fact}
\numberwithin{equation}{section}
\DeclareMathOperator{\co}{co}
\DeclareMathOperator{\Des}{Des}
\newcommand{\SSSS}{\mathfrak S}
\newcommand{\ab}{\av\bv}
\newcommand{\av}{{\bf a}}
\newcommand{\bv}{{\bf b}}
\newcommand{\cd}{\cv\dv}
\newcommand{\cv}{{\bf c}}
\newcommand{\dv}{{\bf d}}
\newcommand{\zab}{\Zzz\langle\av,\bv\rangle}
\newcommand{\Rrr}{\mathbb{R}}
\newcommand{\Qqq}{\mathbb{Q}}
\newcommand{\Zzz}{\mathbb{Z}}
\begin{document}

\author{Richard Ehrenborg and N.\ Bradley Fox}
\title{The Descent Set Polynomial Revisited}
\date{}

\maketitle

\begin{abstract}
We continue to explore cyclotomic factors
in the descent set polynomial $Q_{n}(t)$,
which was introduced by
Chebikin, Ehrenborg, Pylyavskyy and Readdy.
We obtain large classes of factors of the form
$\Phi_{2s}$ or $\Phi_{4s}$ where $s$ is an odd integer,
with many of these being of the form
$\Phi_{2p}$ where $p$ is a prime.
We also show that if $\Phi_{2}$ is a factor
of $Q_{2n}(t)$ then it is a double factor.
Finally, we give conditions for an odd prime power
$q = p^{r}$ for which 
$\Phi_{2p}$ is a double factor
of~$Q_{2q}(t)$ and of $Q_{q+1}(t)$.
\end{abstract}

\section{Introduction}

For a permutation $\pi$ in the symmetric group~$\SSSS_{n}$,
define the descent set
of $\pi$ to be the subset of $[n-1] = \{1,2, \ldots, n-1\}$
given by
$\Des(\pi) = \{i \in [n-1] \: : \: \pi_{i} > \pi_{i+1}\}$.
The descent set statistics~$\beta_{n}(S)$ are defined for
subsets $S$ of $[n-1]$ by
$$  \beta_{n}(S)
   =
      \left|
        \{ \pi \in \SSSS_{n} \: : \: \Des(\pi) = S \}
      \right|   .   $$
Chebikin, Ehrenborg, Pylyavskyy and Readdy~\cite{C_E_P_R}
defined the $n$th descent set polynomial to be
$$    Q_{n}(t)
    =
        \sum_{S \subseteq [n-1]} t^{\beta_{n}(S)} .  $$
They observed that this polynomial has many factors
that are cyclotomic polynomials.
The most common of these cyclotomic polynomials
is $\Phi_{2} = t+1.$
It is direct that having $\Phi_{2}$ as a factor implies that
the number of subsets of $[n-1]$ having an even descent set
statistic is the same as
the number of subsets having an odd descent set
statistic.
Consider the proportion
of odd entries among the descent set statistics in the 
symmetric group $\mathfrak{S}_n$, that is,
$$
  \rho(n)
    =
  \frac{|\{S\subseteq [n-1]\text{ : } \beta_n(S)\equiv 1 \bmod 2\}|}{2^{n-1}} .
$$ 
Chebikin et al.\ showed that this proportion depends
on the number of $1$'s in the binary expansion
of~$n$. We quote their paper with the following table.
Only the values $2^k-1$ are included in the table since
$\rho(2^k-1)$ is the same as $\rho(n)$ if $n$ has
$k$ 1's in its binary expansion.
\begin{table}[h]
$$
\begin{array}{c | c c c c c}
n & 1 & 3 & 7 & 15 & 31 \\ \hline
\rho(n) & 1 & {1}/{2} & {1}/{2} & {29}/{2^{6}} & {3991}/{2^{13}} 
\end{array}
$$
\caption{The proportion $\rho(n)$.}
\label{table_rho}
\end{table}
Hence when $n$ has two or three $1$'s in its binary
expansion we obtain $\Phi_{2}$ as a factor
in the descent set polynomial $Q_{n}(t)$.
Note that the proportion is not known for six or more $1$'s
in the binary expansion.

Chebikin et al.\ gave more results for cyclotomic factors
in the descent set polynomial:
\begin{enumerate}[(i)]
\item
When $n = 2^{j} \geq 4$ then $\Phi_{4}$ divides $Q_{n}(t)$.

\item
When $q = p^{r}$ is an odd prime power with
two or three $1$'s in its binary expansion 
and $q \neq 3$ or~$7$,
then
$\Phi_{2p}$ divides $Q_{q}(t)$.

\item
When $q = p^{r}$ is an odd prime power with
two or three $1$'s in its binary expansion, 
then
$\Phi_{2p}$ divides $Q_{2q}(t)$.
\end{enumerate}
They also found cases when
there were double factors in the descent set polynomial:
\begin{enumerate}
\item[(iv)]
If the binary expansion of $n$ has two $1$'s in its binary
expansion and $n > 3$,
then $\Phi_{2}$
is a double factor of $Q_{n}(t)$.

\item[(v)]
If $n = 2^{j} \geq 4$ then $\Phi_{4}$
is a double factor of $Q_{n}(t)$.

\item[(vi)]
When $q = p^{r}$ is an odd prime power
and $q$ has two $1$'s in its binary expansion, 
then $\Phi_{2p}$ 
is a double factor of $Q_{n}(t)$.
\end{enumerate}

We continue their work in explaining cyclotomic factors
in these polynomials.  
In Section~\ref{section_preliminaries}
we review some preliminary notions
and tools that will help
in developing our results.
We introduce a simplicial complex
in Section~\ref{section_complex}
that determines the parity of the 
descent statistics.
Namely, the reduced Euler characteristic
of an induced subcomplex gives the
descent statistics modulo~$2$.
In Section~\ref{section_one_digit} we 
prove for $s$ an odd integer when $\Phi_{4s}$ 
is a factor of $Q_{n}(t)$ with $n$ being a power
of $2$.
In Section~\ref{section_two_digits}
we show
for $s$ an odd integer when $\Phi_{2s}$ is
a factor of $Q_{n}(t)$ when $n$ has two
non-zero digits in its binary expansion.
We prove a multitude of cases in this section
when we set $s$ to be a prime number $p$.
Similarly, when $n$ has three digits in
its binary expansion, we develop cases
when $\Phi_{2s}$, and likewise $\Phi_{2p}$, is
a factor of $Q_{n}(t)$
in Section~\ref{section_three_digits}.

We also continue the work on double factors
in the descent set polynomial $Q_{n}(t)$
in Sections~\ref{section_double_factor_Phi_2}
through~\ref{section_q_plus_1}.
In fact, the two results (iv) and (vi) both need 
the condition that
the number of $1$'s in the binary expansion of~$n$
is exactly two.
Furthermore, the result (vi) applies only (so far) to the five Fermat
primes and the prime power~$3^{2}$,
whereas our results apply
when there are two or three $1$'s
in the binary expansion.
First in Theorem~\ref{theorem_double_factor}
we show that if $\Phi_{2}$ is a factor of
$Q_{2n}(t)$ then it is a double factor.
Next in
Theorems~\ref{theorem_double_factor_two}
and~\ref{theorem_double_factor_three}
we find the double factor $\Phi_{2p}$ in
$Q_{2q}(t)$ and $Q_{q+1}(t)$ where $q = p^{r}$
is an odd prime power.
The corresponding proofs in~\cite{C_E_P_R}
depend on substituting 
values for the variables in the $\ab$-index
of the Boolean algebra,
whereas our proofs rely on evaluating
a more general linear function;
see Proposition~\ref{proposition_2n_n}.
The underlying reason for these results is
that the descent set statistic
is straightforward to compute modulo the prime $p$;
see Lemma~\ref{lemma_q} and
equation~\eqref{equation_q_plus_1}.

A summary of cyclotomic factors of $Q_{n}(t)$ that Chebikin et al.\  found, as well as 
which ones were explained by their and our results,
can be found in Table~\ref{table_P}.
We end with open questions in the concluding remarks.

\section{Preliminaries}        
\label{section_preliminaries}

Let $[i,j]$ denote the interval $\{i, i+1, \ldots, j\}$.
Furthermore, let $\triangle$ denote the symmetric
difference of two sets, that is,
$S \triangle T = S \cup T - S \cap T$.
Finally, let $S - k$ denote the shifting of the set by $k$,
that is, $S - k = \{s-k \: : \: s \in S\}$.

MacMahon's Multiplication Theorem~\cite[Article 159]{MacMahon}
relates the descent set statistics of two
sets that differ by only one element, stated as
$$   \beta_{n}(S) + \beta_{n}(S \triangle \{k\})
     =
     \binom{n}{k}
        \cdot
     \beta_{k}(S \cap [k-1]) 
        \cdot
     \beta_{n-k}(S \cap [k+1, n-1] - k)  . $$
This result is usually written with the assumption
$k \not\in S$ and the left hand-side as
$\beta_{n}(S) + \beta_{n}(S \cup \{k\})$,
whereas we find it more convenient to work with the symmetric difference.

One way to compute the descent set statistics is via
the flag $f$-vector of the Boolean algebra.
For $S = \{s_{1} < s_{2} < \cdots < s_{k}\} \subseteq [n-1]$,
let $\co(S) = \vec{c} = (c_{1}, c_{2}, \ldots, c_{k+1})$
be the associated composition of $n$
where $c_{i} = s_{i} - s_{i-1}$, where we let $s_{0} = 0$
and $s_{k+1} = n$.
Then the flag $f$-vector of the Boolean algebra~$B_{n}$
is given by the multinomial coefficient
$$  f_{S}  = \binom{n}{\vec{c}} = \binom{n}{c_{1}, c_{2}, \ldots, c_{k+1}}  ,  $$
and the descent set statistics is given by
the inclusion-exclusion
\begin{equation}
  \beta_n(S) = \sum_{T \subseteq S} (-1)^{|S-T|} \cdot f_{T}  . 
\label{equation_inclusion-exclusion}
\end{equation}
An efficient encoding of all the flag $f$-vector entries
of the Boolean algebra is by
the quasi-symmetric function. For a composition
$\vec{c} = (c_{1}, c_{2}, \ldots, c_{k})$
let $M_{\vec{c}}$ denote the monomial quasi-symmetric function
defined by
$$    M_{\vec{c}} 
    =
       \sum_{1 \leq i_{1} < i_{2} < \cdots < i_{k}}
                x_{i_{1}}^{c_{1}} \cdot
                x_{i_{2}}^{c_{2}} \cdots
                x_{i_{k}}^{c_{k}}  .   $$
The algebra of quasi-symmetric functions is the linear span
of the monomial quasi-symmetric functions.
Multiplication of monomial quasi-symmetric functions
is described in Lemma~3.3 in~\cite{Ehrenborg_Hopf}.
Now the quasi-symmetric function of the Boolean algebra
is given in~\cite{Ehrenborg_Hopf} by
$$   F(B_{n})  = (x_{1} + x_{2} + \cdots)^{n}
                      = M_{(1)}^{n}
                      = \sum_{\vec{c}}  \binom{n}{\vec{c}} \cdot M_{\vec{c}} .  $$
The purpose of quasi-symmetric functions is that they allow
efficient computations of the flag $f$-vector modulo a prime $p$,
using the classical relation $(x+y)^{p} \equiv x^{p} + y^{p} \bmod p$.
Finally, using the inclusion-exclusion
equation~\eqref{equation_inclusion-exclusion},
we obtain information about the descent set statistics.
Below is a lemma,
adapted from Lemma 3.2~in~\cite{C_E_P_R},
to compute
the quasi-symmetric function of the Boolean algebra
$F(B_{n})=M_{(1)}^{n}$ modulo a prime.
\begin{lemma}
For $p$ prime and
$n=d_{1} p^{j_{1}}+d_{2} p^{j_{2}}+\cdots+d_{k} p^{j_{k}}$
with $j_{1} > \cdots > j_{k} \geq 0$,
the quasi-symmetric function of the Boolean algebra
$B_{n}$ modulo $p$ is given by
$F(B_{n}) \equiv \prod_{i=1}^k M_{(p^{j_{i}})}^{d_{i}} \bmod p$.
\label{lemma_power_prime}
\end{lemma}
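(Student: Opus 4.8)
The plan is to work entirely inside the ring of formal power series, equivalently the algebra of quasi-symmetric functions, and to exploit the identity $F(B_{n}) = M_{(1)}^{n}$ recorded above together with the Frobenius congruence $(a+b)^{p} \equiv a^{p} + b^{p} \bmod p$. Since this ring is commutative, iterating the congruence gives $(a_{1} + \cdots + a_{m})^{p} \equiv a_{1}^{p} + \cdots + a_{m}^{p} \bmod p$ for any finite sum, and then a further induction on $j$ yields $(a_{1} + \cdots + a_{m})^{p^{j}} \equiv a_{1}^{p^{j}} + \cdots + a_{m}^{p^{j}} \bmod p$.

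First I would establish the special case $M_{(1)}^{p^{j}} \equiv M_{(p^{j})} \bmod p$. Any given monomial appearing in $M_{(1)}^{p^{j}} = (x_{1} + x_{2} + \cdots)^{p^{j}}$ involves only finitely many of the variables, so the congruence for finite sums above applies coefficient by coefficient and gives $M_{(1)}^{p^{j}} \equiv \sum_{i \geq 1} x_{i}^{p^{j}} = M_{(p^{j})} \bmod p$. Alternatively one can argue directly from the multinomial expansion, using that $\binom{p^{j}}{a}$ is divisible by $p$ for $0 < a < p^{j}$ by Lucas' theorem, so that $\binom{p^{j}}{a_{1}, a_{2}, \ldots}$ vanishes modulo $p$ unless exactly one $a_{i}$ equals $p^{j}$ and the rest are zero.

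Finally, from $n = d_{1} p^{j_{1}} + \cdots + d_{k} p^{j_{k}}$ and commutativity I would factor
$$ F(B_{n}) = M_{(1)}^{n} = \prod_{i=1}^{k} \bigl(M_{(1)}^{p^{j_{i}}}\bigr)^{d_{i}} \equiv \prod_{i=1}^{k} M_{(p^{j_{i}})}^{d_{i}} \bmod p, $$
applying the previous step to each factor. I do not anticipate a genuine obstacle: the only points needing a little care are the passage from finitely to infinitely many variables in the Frobenius step, and the remark that the conclusion does not depend on which representation $n = \sum_{i} d_{i} p^{j_{i}}$ with $j_{1} > \cdots > j_{k} \geq 0$ one uses, the base-$p$ expansion with $0 \leq d_{i} \leq p-1$ being the one intended in later applications.
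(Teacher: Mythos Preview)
Your proof is correct and follows essentially the same route as the paper: establish $M_{(1)}^{p^{j}} \equiv M_{(p^{j})} \bmod p$ via the Frobenius congruence, then factor $F(B_{n}) = M_{(1)}^{n}$ according to the base-$p$ expansion of $n$ and apply this to each factor. The extra care you take with the passage to infinitely many variables and the alternative Lucas-theorem argument are fine but not needed beyond what the paper records.
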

\begin{proof}
The congruence
$(x+y)^{p^{m}} \equiv x^{p^{m}}+y^{p^{m}} \bmod p$
extends to monomial quasi-symmetric functions as
$M_{(1)}^{p^{m}} \equiv M_{(p^{m})} \bmod p$.
Hence
the quasi-symmetric function of Boolean algebra $B_{n}$
is evaluated as follows:
\begin{align*}
F(B_{n})
=
M_{(1)}^{d_{1} p^{j_{1}}+d_{2} p^{j_{2}}+\cdots+d_{k} p^{j_{k}}}
& =
\left( M_{(1)}^{p^{j_{1}}} \right)^{d_{1}}
\cdot
\left( M_{(1)}^{p^{j_{2}}} \right)^{d_{2}}
\cdots
\left( M_{(1)}^{p^{j_{k}}} \right)^{d_{k}}
\\
& \equiv
M_{(p^{j_{1}})}^{d_{1}}
\cdot
M_{(p^{j_{2}})}^{d_{2}}
\cdots
M_{(p^{j_{k}})}^{d_{k}}
\bmod p.
\qedhere
\end{align*}

\end{proof}

Chebikin et al.\ defined essential elements in the
case of base $2$, and we extend this notion to base~$p$
for any prime $p$.

\begin{definition}
Let $p$ be a prime and $1 \leq k \leq n-1$.
We say $k$ is {\em essential} for $n$ in base $p$
if we expand both $n$ and $k$ in base $p$,
that is,
$n = \sum_{i \geq 0} n_{i} \cdot p^{i}$
and
$k = \sum_{i \geq 0} k_{i} \cdot p^{i}$
where $0 \leq k_{i}, n_{i} < p$, and
the inequality $k_{i} \leq n_{i}$ holds for all indices $i$.  Otherwise we say $k$ is {\em non-essential} for $n$ in base $p$.
\end{definition}

A different way to state that $k$ is essential for $n$ in base $p$
is that when adding $k$ and $n-k$ in base~$p$ there are no carries.
Directly from this interpretation we have the following natural symmetry:
\begin{lemma}
The element $k$ is essential for $n$ in base $p$
if and only if
$n-k$ is essential for $n$ in base~$p$.
\label{lemma_essential}
\end{lemma}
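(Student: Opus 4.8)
The plan is to turn the informal ``no carries'' description in the preceding paragraph into a precise biconditional and then invoke the commutativity of addition. Write $n = \sum_{i \geq 0} n_{i} p^{i}$ and $k = \sum_{i \geq 0} k_{i} p^{i}$ in base $p$ with $0 \leq n_{i}, k_{i} < p$, put $m = n - k$, and let $m_{i}$ denote the base-$p$ digits of $m$.

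The first step, and the only one requiring an argument, is the claim that $k$ is essential for $n$ in base $p$ if and only if the base-$p$ addition $k + m$ produces no carries. If $k_{i} \leq n_{i}$ for every $i$, then each $n_{i} - k_{i}$ lies in $\{0, 1, \ldots, p-1\}$ and $\sum_{i} (n_{i} - k_{i}) p^{i} = n - k = m$; by uniqueness of the base-$p$ expansion this forces $m_{i} = n_{i} - k_{i}$, whence $k_{i} + m_{i} = n_{i} < p$ at every position and no carry ever arises. Conversely, if $k + m$ carries nowhere, then the $i$-th digit of the sum is simply $k_{i} + m_{i}$; since the sum equals $n$, we get $k_{i} + m_{i} = n_{i}$, so $k_{i} \leq n_{i}$ and $k$ is essential for $n$.

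Once this equivalence is in hand, the lemma is immediate: the condition ``$k + (n-k)$ has no carries in base $p$'' is visibly unchanged when the two summands are interchanged, and $n - (n-k) = k$, so $n - k$ is essential for $n$ exactly when $k$ is. There is no genuine obstacle here; the one spot needing a touch of care is the uniqueness-of-representation step, which is precisely where the hypothesis $k_{i} \leq n_{i}$ (rather than merely $\sum_{i}(n_{i}-k_{i})p^{i} = m$) is actually used, since it is what guarantees the $n_{i} - k_{i}$ are legitimate digits.
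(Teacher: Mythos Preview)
Your proof is correct and follows exactly the approach the paper intends: the paper simply states the ``no carries'' reformulation as evident and then declares the lemma an immediate consequence, without writing out any details. You have merely supplied the verification (via uniqueness of the base-$p$ expansion) that the digit-wise condition $k_{i}\le n_{i}$ is equivalent to carry-free addition of $k$ and $n-k$, which the paper leaves to the reader.
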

Another alternative interpretation is as follows:
\begin{lemma}
\label{lemma_binom}
The element $k$ is essential for $n$ in base $p$
if and only if $\binom{n}{k} \not\equiv 0 \bmod p$.
\end{lemma}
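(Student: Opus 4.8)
The plan is to deduce this from Kummer's classical theorem on binomial coefficients. Recall that Kummer's theorem states that the exponent of a prime $p$ in the factorization of $\binom{n}{k}$ equals the number of carries that occur when $k$ and $n-k$ are added in base $p$. In particular, $\binom{n}{k} \not\equiv 0 \bmod p$ if and only if this addition $k + (n-k) = n$ involves no carries. But that is precisely the reformulation of essentiality recorded in the paragraph preceding Lemma~\ref{lemma_essential}: $k$ is essential for $n$ in base $p$ exactly when adding $k$ and $n-k$ in base $p$ produces no carries. Hence the two conditions are identical, which is the assertion.

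An alternative route, should a more elementary derivation be preferred, is via Lucas' theorem. Writing $n = \sum_{i \geq 0} n_{i} \cdot p^{i}$ and $k = \sum_{i \geq 0} k_{i} \cdot p^{i}$ with $0 \leq k_{i}, n_{i} \leq p-1$, Lucas' theorem gives $\binom{n}{k} \equiv \prod_{i \geq 0} \binom{n_{i}}{k_{i}} \bmod p$. For each index $i$, the integer $n_{i}!$ is not divisible by $p$ because $n_{i} \leq p-1$, so $\binom{n_{i}}{k_{i}}$ is a positive integer coprime to $p$ whenever $k_{i} \leq n_{i}$, and it vanishes when $k_{i} > n_{i}$. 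Therefore the product is nonzero modulo $p$ if and only if $k_{i} \leq n_{i}$ for every $i$, which is the definition of $k$ being essential for $n$ in base $p$. If one wishes instead to stay inside the quasi-symmetric framework of this section, one can extract the coefficient of $M_{(k,n-k)}$ from both sides of the congruence $F(B_{n}) = M_{(1)}^{n} \equiv \prod_{i} M_{(p^{j_{i}})}^{d_{i}} \bmod p$ furnished by Lemma~\ref{lemma_power_prime}: the left side contributes $\binom{n}{k}$, and determining which quasi-shuffle products of the factors $M_{(p^{j_{i}})}$ can build the composition $(k,n-k)$ recovers the same carry-free criterion.

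There is no substantial obstacle here, as all three approaches are short. The only point demanding a small amount of care is verifying that a binomial coefficient $\binom{a}{b}$ with $0 \leq b \leq a \leq p-1$ is genuinely a unit modulo $p$; this is immediate since none of the factorials $a!$, $b!$, $(a-b)!$ contains the prime $p$. I would present the Kummer-based argument as the main proof, since it makes the connection to the carry interpretation transparent and needs only one invoked fact.
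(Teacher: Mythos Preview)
Your proposal is correct. The paper's own proof takes your second route via Lucas' theorem, essentially verbatim: it writes $\binom{n}{k} \equiv \prod_{i} \binom{n_{i}}{k_{i}} \bmod p$ and observes that each factor is nonzero modulo $p$ exactly when $k_{i} \leq n_{i}$. Your preferred Kummer-based argument is an equally short and valid alternative that ties directly to the carry interpretation the paper records before Lemma~\ref{lemma_essential}; either approach suffices, and the difference is purely a matter of which classical theorem one cites.
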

\begin{proof}
By Lucas' theorem, see~\cite[Chapter~XXIII, Section~228]{Lucas},
we have that
$$ \binom{n}{k}
         \equiv
     \prod_{i \geq 0}
       \binom{n_{i}}{k_{i}}
     \bmod p  .  $$
Observe that for $0 \leq k_{i}, n_{i} \leq p-1$
we have that
$\binom{n_{i}}{k_{i}} \not\equiv 0 \bmod p$
if and only if $k_{i} \leq n_{i}$.
\end{proof}

Note that for an element $k$ which is non-essential in base $p$, 
the previous lemma implies that $p$ divides $\binom{n}{k}$.
This allows the following lemma to apply for this number $k$ when
we set the integer $m$ to be the prime $p$.

\begin{lemma}
Let $m$ and $k$ be positive integers such that $1 \leq k \leq n-1$
and $m$ divides $\binom{n}{k}$.
For a subset $S$ of $[n-1]$
the following holds
\begin{align*}
\beta_{n}(S)
& \equiv
-\beta_{n}(S \triangle \{k\}) \bmod m . 
\end{align*}
\label{lemma_flip}
\end{lemma}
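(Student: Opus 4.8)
The plan is to derive this immediately from MacMahon's Multiplication Theorem, recorded above, which states that for $1 \leq k \leq n-1$ and any $S \subseteq [n-1]$,
\begin{equation*}
  \beta_{n}(S) + \beta_{n}(S \triangle \{k\})
     =
     \binom{n}{k}
        \cdot
     \beta_{k}(S \cap [k-1])
        \cdot
     \beta_{n-k}(S \cap [k+1, n-1] - k)  .
\end{equation*}
As the excerpt notes, this form with the symmetric difference is valid whether or not $k \in S$, so no case distinction is needed. The two trailing factors $\beta_{k}(S \cap [k-1])$ and $\beta_{n-k}(S \cap [k+1,n-1]-k)$ are nonnegative integers (they are cardinalities of sets of permutations, and both $\SSSS_{k}$ and $\SSSS_{n-k}$ make sense since $1 \leq k$ and $1 \leq n-k$), hence the entire right-hand side is an integer multiple of $\binom{n}{k}$.

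First I would observe that since $m$ divides $\binom{n}{k}$ by hypothesis, the right-hand side is divisible by $m$, so
\begin{equation*}
  \beta_{n}(S) + \beta_{n}(S \triangle \{k\}) \equiv 0 \bmod m .
\end{equation*}
Rearranging gives $\beta_{n}(S) \equiv -\beta_{n}(S \triangle \{k\}) \bmod m$, which is the claim. One small point to state explicitly is that the hypothesis $m \mid \binom{n}{k}$ is exactly what is supplied in the intended applications: by Lemma~\ref{lemma_binom}, if $k$ is non-essential for $n$ in base $p$ then $p \mid \binom{n}{k}$, so the lemma applies with $m = p$.

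There is essentially no obstacle here: the content of the lemma is entirely carried by MacMahon's identity, and the proof is a one-line divisibility observation. If anything, the only thing to be careful about is citing MacMahon's theorem in the symmetric-difference form (rather than the more classical $\beta_n(S) + \beta_n(S \cup \{k\})$ form with $k \notin S$), but this has already been set up in the preliminaries, so it can be used directly.
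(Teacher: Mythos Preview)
Your proof is correct and follows essentially the same approach as the paper: apply MacMahon's Multiplication Theorem in its symmetric-difference form and use the hypothesis $m \mid \binom{n}{k}$ to conclude that the right-hand side vanishes modulo $m$. The paper's proof is just the same one-line divisibility observation.
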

\begin{proof}
By MacMahon's multiplication theorem we have that
$$   \beta_{n}(S) + \beta_{n}(S \triangle \{k\})
     =
     \binom{n}{k}
        \cdot
     \beta_{k}(S \cap [k-1]) 
        \cdot
     \beta_{n-k}(S \cap [k+1, n-1] - k)  ,   $$
and the result follows 
by the assumption that
$\binom{n}{k} \equiv 0 \bmod m$.
\end{proof}

For $0 \leq j \leq m-1$ define
$a_{m,j}$ to be the number of subsets $S \subseteq [n-1]$
such that $\beta_{n}(S) \equiv j \bmod m$.
Note that we suppress the dependency on $n$.
Furthermore, if $m$ is clear from the context, we simply
write $a_{j}$. 

\begin{lemma}
\label{lemma_flip_sign}
Let $m$ be a positive integer and $1 \leq k \leq n-1$.
If $m$ divides $\binom{n}{k}$
then the equality $a_{m,j} = a_{m,-j}$ holds for all $j$.
\end{lemma}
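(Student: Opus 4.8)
The plan is to exploit the involution $S \mapsto S \triangle \{k\}$ on the power set of $[n-1]$. Since $1 \leq k \leq n-1$, the element $k$ lies in $[n-1]$, so the map $\sigma \colon 2^{[n-1]} \to 2^{[n-1]}$ defined by $\sigma(S) = S \triangle \{k\}$ is well defined; moreover $\sigma \circ \sigma$ is the identity, so $\sigma$ is a bijection of $2^{[n-1]}$ onto itself.

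Next I would invoke Lemma~\ref{lemma_flip} with the given integer $m$: because $m$ divides $\binom{n}{k}$, for every subset $S \subseteq [n-1]$ we have $\beta_{n}(S) \equiv -\beta_{n}(\sigma(S)) \bmod m$. Hence $\beta_{n}(S) \equiv j \bmod m$ holds if and only if $\beta_{n}(\sigma(S)) \equiv -j \bmod m$, which says exactly that $\sigma$ carries the set $\{S \subseteq [n-1] \: : \: \beta_{n}(S) \equiv j \bmod m\}$ into the set $\{S \subseteq [n-1] \: : \: \beta_{n}(S) \equiv -j \bmod m\}$. Applying the same observation to $\sigma^{-1} = \sigma$ shows the inclusion also goes the other way, so $\sigma$ restricts to a bijection between these two finite sets.

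Since a bijection between finite sets preserves cardinality, we conclude $a_{m,j} = a_{m,-j}$ for every $j$. There is essentially no obstacle here: the only point to verify is that $\sigma$ is a genuine involution on the correct domain, which is immediate from $k \in [n-1]$, and the sign flip is precisely the content of Lemma~\ref{lemma_flip}. The mildly degenerate case $2j \equiv 0 \bmod m$ needs no separate treatment, as the claimed identity $a_{m,j} = a_{m,-j}$ is then trivially true.
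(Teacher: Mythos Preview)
Your proof is correct and follows essentially the same approach as the paper: both use Lemma~\ref{lemma_flip} together with the involution $S \mapsto S \triangle \{k\}$ to obtain a bijection between the subsets counted by $a_{m,j}$ and those counted by $a_{m,-j}$.
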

\begin{proof}
By Lemma~\ref{lemma_flip}
we have that
$\beta_{n}(S) \equiv -\beta_{n}(S \triangle \{k\}) \bmod m$.
Hence the map sending $S$ to the symmetric difference
$S \triangle \{k\}$ yields a bijection between the sets counted 
by $a_{m,j}$ and~$a_{m,-j}$.
\end{proof}

The following are consequences of Theorem~2.1 in~\cite{C_E_P_R}, which gives information about the proportion of even or odd descent statistics $\beta_{n}(S)$ depending on the number of 1's in the binary expansion of $n$.  We apply their result to achieve equalities involving $a_{i,j}$.

\begin{theorem}[Chebikin et al.]
\label{theorem_binary_exp}
\begin{enumerate}[(a)]
\item If $n$ has only one 1 in its binary expansion, i.e. $n=2^{a}$,
then
$\beta_{n}(S)\equiv 1 \bmod 2$ for all subsets $S\subseteq [n-1]$.

\item If $n$ has either two or three 1's in its binary expansion, then there is an identical number of even descent statistics as there is of odd descent statistics.
\end{enumerate}
\end{theorem}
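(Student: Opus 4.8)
The plan is to compute the flag $f$-vector of the Boolean algebra modulo $2$ and then feed it through the inclusion--exclusion formula~\eqref{equation_inclusion-exclusion}, which determines the parities of the descent statistics $\beta_{n}(S)$; part (b) then reduces to evaluating a sign sum. For part (a), write $n = 2^{a}$ and apply Lemma~\ref{lemma_power_prime} with $p = 2$ to obtain $F(B_{n}) \equiv M_{(2^{a})} \bmod 2$. Thus $f_{S}$ is odd exactly when $\co(S) = (2^{a})$, that is, only for $S = \emptyset$. Since $(-1)^{|S-T|}$ is odd, equation~\eqref{equation_inclusion-exclusion} gives $\beta_{n}(S) \equiv \sum_{T \subseteq S} f_{T} \equiv 1 \bmod 2$ for every $S$, because $\emptyset$ is the unique subset of $S$ lying in that support.

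For part (b), Lemma~\ref{lemma_power_prime} gives $F(B_{n}) \equiv M_{(2^{a})} M_{(2^{b})} \bmod 2$ when $n = 2^{a} + 2^{b}$ with $a > b$, and $F(B_{n}) \equiv M_{(2^{a})} M_{(2^{b})} M_{(2^{c})} \bmod 2$ when $n = 2^{a} + 2^{b} + 2^{c}$ with $a > b > c \geq 0$. I would expand these products using the multiplication rule for monomial quasi-symmetric functions from Lemma~3.3 in~\cite{Ehrenborg_Hopf}; since the exponents $2^{a}, 2^{b}, 2^{c}$ and their pairwise sums are pairwise distinct, every coefficient that appears equals $1$. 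Hence modulo $2$ the support $\mathcal{F} = \{T \subseteq [n-1] : f_{T} \text{ odd}\}$ is an explicit small family: for two binary digits $\mathcal{F} = \{\emptyset,\ \{2^{a}\},\ \{2^{b}\}\}$, and for three binary digits $\mathcal{F}$ consists of $\emptyset$, the six singletons $\{2^{a}\}, \{2^{b}\}, \{2^{c}\}, \{2^{a}+2^{b}\}, \{2^{a}+2^{c}\}, \{2^{b}+2^{c}\}$, and the six pairs $\{2^{a},2^{a}+2^{b}\}, \{2^{a},2^{a}+2^{c}\}, \{2^{b},2^{a}+2^{b}\}, \{2^{b},2^{b}+2^{c}\}, \{2^{c},2^{a}+2^{c}\}, \{2^{c},2^{b}+2^{c}\}$. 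One then checks that the six underlying special elements $2^{a}, 2^{b}, 2^{c}, 2^{a}+2^{b}, 2^{a}+2^{c}, 2^{b}+2^{c}$ are pairwise distinct and all lie in $[n-1]$ (when $n = 7$ they exhaust $[6]$).

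With this, equation~\eqref{equation_inclusion-exclusion} yields $\beta_{n}(S) \equiv |\{T \in \mathcal{F} : T \subseteq S\}| \bmod 2$, so the number of subsets with even $\beta_{n}(S)$ minus the number with odd $\beta_{n}(S)$ equals $Q_{n}(-1) = \sum_{S \subseteq [n-1]} (-1)^{\beta_{n}(S)} = \sum_{S \subseteq [n-1]} \prod_{T \in \mathcal{F}} (-1)^{[T \subseteq S]}$, where $(-1)^{[T \subseteq S]}$ denotes $-1$ if $T \subseteq S$ and $+1$ otherwise. Encoding $S$ by its $\{0,1\}$-indicator vector $(y_{k})_{k \in [n-1]}$ and writing $(-1)^{[T \subseteq S]} = 1 - 2\prod_{k \in T} y_{k}$ turns this into a sum of a polynomial over $\{0,1\}^{[n-1]}$. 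The coordinates $y_{k}$ with $k$ not a special element are free and contribute a power of $2$, while the factor coming from $\emptyset \in \mathcal{F}$ is the constant $-1$. In the two-digit case what remains is $-2^{n-3} \sum_{u,v \in \{\pm 1\}} u v = 0$. In the three-digit case, each of the three merged elements $2^{a}+2^{b}, 2^{a}+2^{c}, 2^{b}+2^{c}$ appears in exactly three members of $\mathcal{F}$ and no member of $\mathcal{F}$ contains two of them, so summing out their indicator variables factors the sum and reduces it to $-2^{n-7} \sum_{u,v,w \in \{\pm 1\}} uvw\,(1-uv)(1-uw)(1-vw)$, where $u,v,w$ are the $\pm 1$-substitutions attached to $2^{a}, 2^{b}, 2^{c}$. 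This vanishes term by term, since among three signs two must coincide, killing one of the factors $1-uv$, $1-uw$, $1-vw$. Thus $Q_{n}(-1) = 0$ in both cases, which is part (b).

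The bulk of the work, and the likeliest place for an error, is the three-digit case: performing the quasi-shuffle expansion of $M_{(2^{a})} M_{(2^{b})} M_{(2^{c})}$ modulo $2$ correctly so as to pin down the thirteen-set family $\mathcal{F}$, verifying the distinctness and range of the six special elements, and, crucially, checking the structural fact that no member of $\mathcal{F}$ contains two of the merged elements $2^{a}+2^{b}, 2^{a}+2^{c}, 2^{b}+2^{c}$ --- this independence is exactly what allows the sign sum to factor and collapse to zero. The remaining manipulations are routine.
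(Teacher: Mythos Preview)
Your proof is correct. Note that the paper itself does not prove this theorem; it is quoted from Chebikin, Ehrenborg, Pylyavskyy and Readdy~\cite{C_E_P_R} as a consequence of their Theorem~2.1, so there is no in-paper argument to compare against directly.

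That said, your argument dovetails with the machinery the paper develops later in Section~\ref{section_complex}. Your family $\mathcal{F}$ is precisely the face set (including the empty face) of the simplicial complex~$\Delta_{n}$, and your parity formula $\beta_{n}(S)\equiv|\{T\in\mathcal{F}:T\subseteq S\}|\bmod 2$ is exactly Theorem~\ref{theorem_Euler} read modulo~$2$. In the three-digit case $\Delta_{n}$ is the hexagon of Figure~\ref{figure_one}, whose $1+6+6=13$ faces match your thirteen-element~$\mathcal{F}$, and the structural fact you single out as ``crucial'' --- that no member of $\mathcal{F}$ contains two of the merged elements --- is just the statement that no edge of the hexagon joins two midpoint vertices. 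What you add beyond the paper is the explicit evaluation of $Q_{n}(-1)$: the substitution $(-1)^{[T\subseteq S]}=1-2\prod_{k\in T}y_{k}$, the independent summation over the three merged indicator variables, and the pigeonhole collapse of $uvw(1-uv)(1-uw)(1-vw)$. This gives a clean, self-contained derivation of part~(b) that the present paper does not spell out.
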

In terms of the proportion introduced in the introduction,
we have
$\rho(2^{a}) = 1$,
$\rho(2^{b} + 2^{a}) = 1/2$
and
$\rho(2^{c} + 2^{b} + 2^{a}) = 1/2$
for non-negative integers $c > b > a$.
As a direct corollary we have
\begin{corollary}
\label{corollary_binary_exp}
Let $s$ be an odd positive integer.
\begin{enumerate}[(a)]
\item If $n$ has only one $1$ in its binary expansion,
then for $j$ even $a_{2s,j} = 0$ holds.

\item If $n$ has either two or three $1$'s in its binary expansion,
then
$$ \sum_{\substack{j=0 \\ j \text{ even}}}^{2s-2} a_{2s,j}
    = \sum_{\substack{j=0 \\ j \text{ odd}}}^{2s-1} a_{2s,j} . $$
\end{enumerate}
\end{corollary}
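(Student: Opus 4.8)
The plan is to translate the statement about residues modulo $2s$ into a statement about parity, which is possible precisely because $s$ is odd and hence $2s$ is even. The key elementary observation is that if $\beta_n(S) \equiv j \bmod 2s$, then $\beta_n(S)$ and $j$ differ by a multiple of $2s$, hence by an even number, so $\beta_n(S) \equiv j \bmod 2$. In particular, among the residues $j \in \{0, 1, \ldots, 2s-1\}$, the even values $\{0, 2, \ldots, 2s-2\}$ collect exactly the subsets $S$ with $\beta_n(S)$ even, while the odd values $\{1, 3, \ldots, 2s-1\}$ collect exactly those with $\beta_n(S)$ odd. Thus $\sum_{j \text{ even}} a_{2s,j}$ equals the number of $S \subseteq [n-1]$ with $\beta_n(S)$ even, and likewise for the odd sum.

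For part (a), I would invoke Theorem~\ref{theorem_binary_exp}(a): when $n = 2^{a}$ is a power of two, $\beta_n(S) \equiv 1 \bmod 2$ for every $S \subseteq [n-1]$. By the observation above, any subset contributing to $a_{2s,j}$ with $j$ even would have $\beta_n(S)$ even, which is impossible; hence $a_{2s,j} = 0$ for each even $j$.

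For part (b), I would invoke Theorem~\ref{theorem_binary_exp}(b): when $n$ has two or three $1$'s in its binary expansion, the number of subsets $S$ with $\beta_n(S)$ even equals the number with $\beta_n(S)$ odd. Applying the parity bookkeeping from the first paragraph, the left-hand sum $\sum_{j \text{ even},\, 0 \leq j \leq 2s-2} a_{2s,j}$ counts the former and the right-hand sum $\sum_{j \text{ odd},\, 1 \leq j \leq 2s-1} a_{2s,j}$ counts the latter, so the two sums coincide.

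There is essentially no obstacle here: the corollary is a direct repackaging of Theorem~\ref{theorem_binary_exp}. The only point requiring a moment's care is the use of the hypothesis that $s$ is odd — it guarantees the modulus $2s$ is even, which is exactly what lets one read off the parity of $\beta_n(S)$ from its residue class modulo $2s$. Were $2s$ replaced by an odd modulus, this translation would fail.
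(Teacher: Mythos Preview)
Your argument is correct and matches the paper's treatment: the corollary is stated there as an immediate consequence of Theorem~\ref{theorem_binary_exp} with no separate proof, and your parity bookkeeping is exactly the intended one-line deduction.

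One small remark on your closing comment: the hypothesis that $s$ is odd is not actually needed here, since $2s$ is even for \emph{every} integer $s$, and that is all your parity argument uses. The oddness of $s$ is carried along in the statement only because it is required in the later applications (e.g.\ Lemma~\ref{factor_lemma} and Theorems~\ref{theorem_two_digits}--\ref{theorem_three_digits}), not for this corollary itself.
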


We end with a well-known fact from algebra.
\begin{fact}
If $f(t)$ is a polynomial in $\Qqq[t]$
with $e^{{2\pi i}/{j}}$ as a root of
multiplicity $r$ then the $j$th cyclotomic polynomial
$\Phi_{j}(t)$ is a factor of order $r$ of $f(t)$.
\end{fact}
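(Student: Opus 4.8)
The plan is to reduce the statement to two classical facts about cyclotomic polynomials. First, $\Phi_{j}(t)$ is irreducible over $\Qqq$, so it is precisely the minimal polynomial over $\Qqq$ of the primitive root $\zeta = e^{2\pi i/j}$. Second, $\Phi_{j}(t)$ has only simple roots, namely the primitive $j$th roots of unity: this is immediate because $\Phi_{j}(t)$ divides $t^{j}-1$, and $t^{j}-1$ is squarefree in $\Qqq[t]$ since $\gcd(t^{j}-1,\, j\,t^{j-1}) = 1$ in characteristic $0$. The irreducibility of $\Phi_{j}$ is the one genuinely nontrivial input, and I would simply cite it (it goes back to Gauss); everything else is bookkeeping.

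Assuming $f(t) \neq 0$ (otherwise root multiplicity is not meaningful), I would next invoke unique factorization in $\Qqq[t]$. Since $\Phi_{j}(t)$ is irreducible, it is a prime element of $\Qqq[t]$, so one may write $f(t) = \Phi_{j}(t)^{m} \cdot u(t)$ with $u(t) \in \Qqq[t]$ and $\Phi_{j}(t) \nmid u(t)$, where the exponent $m \geq 0$ is by definition the order of $\Phi_{j}$ as a factor of $f$. It then remains to show $m = r$.

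For this I would pass to $\mathbb{C}[t]$ and compare the multiplicity of $\zeta$ on both sides of $f = \Phi_{j}^{m} \cdot u$. By hypothesis this multiplicity is $r$. On the other hand, the linear factor $(t-\zeta)$ occurs exactly $m$ times inside $\Phi_{j}^{m}$, because $\Phi_{j}$ has $\zeta$ as a simple root by the second fact above; and $(t-\zeta)$ does not divide $u$, because $u(\zeta) = 0$ would make $\zeta$ a root of the polynomial $u \in \Qqq[t]$, forcing its minimal polynomial $\Phi_{j}$ to divide $u$, contrary to the choice of $m$. Hence the multiplicity of $\zeta$ as a root of $f$ equals $m$, so $m = r$, which is the claim.

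The only real obstacle is the irreducibility of $\Phi_{j}$ over $\Qqq$, and since this is standard and quoted rather than reproved, the argument is otherwise routine. A small point to handle explicitly is the degenerate case $r = 0$: then $\zeta$ is not a root of $f$, so $\Phi_{j} \nmid f$ by the minimal-polynomial property, and $\Phi_{j}^{0} = 1$ is a factor of $f$ of order $0$, consistent with the statement.
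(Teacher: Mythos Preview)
Your proof is correct and follows essentially the same approach as the paper, which simply records in one line that the result holds because $\Phi_{j}(t)$ is the minimal polynomial of $e^{2\pi i/j}$ over $\Qqq$. You have just filled in the routine bookkeeping (simple roots, unique factorization, the $r=0$ case) that the paper leaves implicit.
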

This follows since the cyclotomic polynomial is
the minimal polynomial of
$e^{{2\pi i}/{j}}$
over the rational field~$\Qqq$.

\section{The simplicial complex $\Delta_{n}$}
\label{section_complex}

We now introduce a simplicial complex, which will
encode the descent set statistics modulo $2$,
via the reduced Euler characteristic.
Let $\Delta_{n}$ be a simplicial complex on the vertex set $[n-1]$.
Let $F$ be a face of
$\Delta_{n}$ if when adding the entries of the associated composition
$\co(F) = (c_{1}, c_{2}, \ldots, c_{k+1})$,
that is, the sum $c_{1} + c_{2} + \cdots + c_{k+1} = n$
has no carries in base $2$.

Notice that $\{i\}$ is a vertex of $\Delta_{n}$ if and only
if $i$ is an essential element of $n$ in base $2$.
In fact, the simplicial complex $\Delta_{n}$ is completely described
by the number of $1$'s in the binary expansion of~$n$.
For $n$ with $k$ $1$'s in its binary expansion, the complex
$\Delta_{n}$ is the barycentric subdivision of
the boundary of a $(k-1)$-dimensional simplex.
A different way to describe it
is that $\Delta_{n}$ is the boundary of the dual of the
$(k-1)$-dimensional permutahedron.

\begin{theorem}
The quasi-symmetric function of $B_{n}$ modulo $2$, is given
by
$$   F(B_{n}) 
   \equiv
     \sum_{F \in \Delta_{n}}
            M_{\co(F)}
   \bmod 2 .  $$
\end{theorem}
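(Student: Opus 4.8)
The plan is to start from the known expansion of the quasi-symmetric function of the Boolean algebra,
$$
  F(B_{n}) = M_{(1)}^{n} = \sum_{\vec{c}} \binom{n}{\vec{c}} \cdot M_{\vec{c}},
$$
the sum being over all compositions $\vec{c} = (c_{1}, c_{2}, \ldots, c_{k+1})$ of $n$, and then to determine modulo $2$ precisely which multinomial coefficients $\binom{n}{\vec{c}}$ are odd. By the very definition of $\Delta_{n}$, the compositions indexing faces $F \in \Delta_{n}$ are exactly those for which the addition $c_{1} + c_{2} + \cdots + c_{k+1} = n$ has no carries in base $2$; equivalently, the binary supports of $c_{1}, \ldots, c_{k+1}$ are pairwise disjoint and their union is the binary support of $n$. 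So the theorem reduces to the claim that $\binom{n}{\vec{c}}$ is odd if and only if $F \in \Delta_{n}$, with every such coefficient reducing to $1$.

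For this claim I would write the multinomial coefficient as a telescoping product of binomials, $\binom{n}{\vec{c}} = \prod_{i=1}^{k+1} \binom{n - c_{1} - \cdots - c_{i-1}}{c_{i}}$, and apply Kummer's theorem to each factor (equivalently, iterate the Lucas-type criterion used to prove Lemma~\ref{lemma_binom}): the $i$th factor is odd exactly when adding $c_{i}$ to $c_{i+1} + \cdots + c_{k+1}$ in base $2$ produces no carry, so the whole product is odd exactly when the full addition $c_{1} + \cdots + c_{k+1}$ is carry-free, which is the condition $F \in \Delta_{n}$. An alternative route, more in the spirit of the preceding section, is to invoke Lemma~\ref{lemma_power_prime} with $p = 2$: since the binary digits of $n$ are all $0$ or $1$, it gives $F(B_{n}) \equiv \prod_{i=1}^{k} M_{(2^{j_{i}})} \bmod 2$ where $n = 2^{j_{1}} + \cdots + 2^{j_{k}}$, and then one expands this product with the quasi-shuffle multiplication rule for monomial quasi-symmetric functions (Lemma~3.3 in~\cite{Ehrenborg_Hopf}). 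The resulting terms $M_{\vec{c}}$ are indexed by ordered set partitions of the bit positions $\{j_{1}, \ldots, j_{k}\}$, each block contributing the part equal to the sum of its powers of $2$; these are again exactly the faces of $\Delta_{n}$.

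Combining either computation, modulo $2$ only the compositions $\co(F)$ with $F \in \Delta_{n}$ survive, each with coefficient $1$, which yields
$$
  F(B_{n}) \equiv \sum_{F \in \Delta_{n}} M_{\co(F)} \bmod 2.
$$

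The main obstacle is the parity count itself: establishing that the $2$-adic valuation of $\binom{n}{\vec{c}}$ equals the number of carries in the base-$2$ addition of its parts (the multinomial form of Kummer's theorem). In the quasi-shuffle alternative the corresponding delicate point is verifying that distinct ordered set partitions of the bit positions of $n$ produce distinct compositions $\vec{c}$ — which holds because each part, being a sum of distinct powers of $2$, recovers the block of bits it came from — and that each such composition therefore appears with coefficient exactly $1$ (so that no cancellation mod $2$ occurs). Everything else is bookkeeping: translating the carry-free condition into membership in $\Delta_{n}$ via $\co$.
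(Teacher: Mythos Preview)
Your proposal is correct. Your second (``alternative'') route --- applying Lemma~\ref{lemma_power_prime} with $p=2$ to obtain $F(B_{n}) \equiv \prod_{i} M_{(2^{j_{i}})} \bmod 2$ and then expanding via the quasi-shuffle rule --- is exactly the paper's proof, including the key observation that distinct ordered set partitions of the bit positions yield distinct compositions (the paper phrases this as $\{2^{j_{1}},\ldots,2^{j_{k}}\}$ being a \emph{knapsack partition}, citing~\cite{Ehrenborg_Readdy}). Your first route, reading off the parity of each multinomial coefficient $\binom{n}{\vec{c}}$ directly via the telescoping product of binomials and Kummer's theorem, is a genuine alternative the paper does not take; it is somewhat more self-contained in that it bypasses the quasi-shuffle multiplication formula, at the modest cost of the iterated carry argument. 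Either route establishes the theorem.
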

\begin{proof}
Write $n$ as a sum of 2-powers, that is,
$n = 2^{j_{1}} + 2^{j_{2}} + \cdots + 2^{j_{k}}$
where $j_{1} > j_{2} > \cdots > j_{k}$.
By Lemma~\ref{lemma_power_prime}
we have have the identity
$$
F(B_{n}) \equiv
M_{2^{j_{1}}} \cdot M_{2^{j_{2}}} \cdots M_{2^{j_{k}}} 
\bmod 2.
$$
Now when multiplying out these $k$ monomial quasi-symmetric functions
we obtain a sum over monomial quasi-symmetric functions, where the
indexing composition has parts consisting of sums of the $2$-powers
$2^{j_{1}}$, $2^{j_{2}}$, \ldots, $2^{j_{k}}$.
Furthermore, each $2$-power can only appear in exactly one part
and only once in that part.
Also note no composition can be created in two different ways;
in the language of the article~\cite{Ehrenborg_Readdy},
the partition
$\{2^{j_{1}}, 2^{j_{2}}, \ldots, 2^{j_{k}}\}$ is a knapsack partition.
Finally, translating the compositions of $n$ into subsets of $[n-1]$
proves the result.
\end{proof}

In other words, the flag $f$-vector entry $f_{S}(B_{n})$ is odd
if and only if
$S$ is a face of the complex $\Delta_{n}$.
Let $\Delta_{n}\vrule_{S}$ denote the simplicial complex
$\Delta_{n}$ restricted to vertex set $S$, that is,
$$  \Delta_{n}\vrule_{S} 
   =
    \{F \subseteq S \: : \: F \in \Delta_{n}\}  .  $$

\begin{theorem}
The descent set statistic
$\beta_{n}(S)$ modulo $2$ is given by
the reduced Euler characteristic of the
induced subcomplex $\Delta_{n}\vrule_{S}$,
that is,
$$  \beta_{n}(S) \equiv \widetilde{\chi}(\Delta_{n}\vrule_{S}) \bmod 2. $$
\label{theorem_Euler}
\end{theorem}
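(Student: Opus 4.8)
The plan is to show that, modulo $2$, both $\beta_{n}(S)$ and $\widetilde{\chi}(\Delta_{n}\vrule_{S})$ equal the parity of the total number of faces of the induced subcomplex $\Delta_{n}\vrule_{S}$, where we count the empty face. Once both quantities have been identified with this common count, the congruence is immediate.

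First I would start from the inclusion-exclusion formula~\eqref{equation_inclusion-exclusion}, namely $\beta_{n}(S) = \sum_{T \subseteq S} (-1)^{|S-T|} \cdot f_{T}$, where $f_{T} = f_{T}(B_{n})$. Reducing modulo $2$ kills the signs, so $\beta_{n}(S) \equiv \sum_{T \subseteq S} f_{T} \bmod 2$. By the previous theorem the entry $f_{T}$ is odd precisely when $T$ is a face of $\Delta_{n}$, hence $f_{T} \bmod 2$ is the indicator of the event $T \in \Delta_{n}$, and therefore
$$ \beta_{n}(S) \equiv \bigl| \{T \subseteq S \: : \: T \in \Delta_{n}\} \bigr| = \bigl| \Delta_{n}\vrule_{S} \bigr| \bmod 2 , $$
where $\bigl| \Delta_{n}\vrule_{S} \bigr|$ counts all faces including the empty face $\emptyset$, which always lies in $\Delta_{n}$ since its associated composition $(n)$ trivially has no carries in base $2$.

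Next I would unwind the right-hand side: by definition $\widetilde{\chi}(\Delta_{n}\vrule_{S}) = \sum_{F \in \Delta_{n}\vrule_{S}} (-1)^{\dim F}$, the sum ranging over all faces $F$ including the empty face with $\dim \emptyset = -1$. Again reducing modulo $2$ collapses the signs, so $\widetilde{\chi}(\Delta_{n}\vrule_{S}) \equiv \bigl| \Delta_{n}\vrule_{S} \bigr| \bmod 2$, and comparing with the previous congruence finishes the proof. The argument is short; the only point that requires care is the bookkeeping of the empty face: it is crucial that the inclusion-exclusion sum runs over every subset $T \subseteq S$, including $T = \emptyset$ with $f_{\emptyset} = 1$, and correspondingly that one uses the \emph{reduced} Euler characteristic, whose contribution $-1$ from the empty simplex matches that term. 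This is exactly why the statement is phrased with $\widetilde{\chi}$ rather than the ordinary Euler characteristic, and the same argument yields the clean reformulation that $\beta_{n}(S)$ is even if and only if $\Delta_{n}\vrule_{S}$ has an even number of faces.
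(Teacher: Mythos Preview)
Your proof is correct and follows essentially the same approach as the paper's: both start from the inclusion--exclusion formula~\eqref{equation_inclusion-exclusion}, use the previous theorem to replace $f_{T}$ by the indicator of $T \in \Delta_{n}$, and identify the result with $\widetilde{\chi}(\Delta_{n}\vrule_{S})$. The only cosmetic difference is that the paper carries the sign $(-1)^{|T|-1}$ through the computation so that the final line is literally the definition of the reduced Euler characteristic, whereas you drop all signs mod~$2$ first and then separately observe that $\widetilde{\chi}$ also reduces to the face count mod~$2$.
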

\begin{proof}
By a direct computation
\begin{align*}
\beta_{n}(S)
 & \equiv
\sum_{T \subseteq S} (-1)^{|S-T|} \cdot f_{T}(B_{n}) \\
 & \equiv
\sum_{T \subseteq S} (-1)^{|T|-1} \cdot f_{T}(B_{n}) \\
 & \equiv
\sum_{T \subseteq S, \: T \in \Delta_{n}} (-1)^{|T|-1} \\
 & \equiv
\widetilde{\chi}(\Delta_{n}\vrule_{S}) \bmod 2 . \qedhere
\end{align*}
\end{proof}

\section{One binary digit}
\label{section_one_digit}

In this section we explore cyclotomic factors
in the descent set polynomial $Q_{n}(t)$
where $n$ is a power of $2$, that is, $n$ has
one $1$ in its binary expansion.
First we have a result showing conditions on the
values of $a_{m,j}$ when we have
a cyclotomic factor in the general $n$th descent
set polynomial.
Note that we abbreviate $a_{m,j}$ as $a_{j}$.

\begin{lemma}
\label{factor_lemma}
Let $m$ be an even positive integer.
The cyclotomic polynomial $\Phi_{m}$ is a factor of the descent set polynomial $Q_{n}(t)$ if the following equations hold:
\begin{align}
a_{j} & = a_{-j} , 
\label{equation_vertical} \\
a_{j} & = a_{m/2-j} ,
\label{equation_horizontal} 
\end{align}
for all integers $j$.
\end{lemma}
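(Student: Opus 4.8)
The descent set polynomial is
$Q_{n}(t) = \sum_{S \subseteq [n-1]} t^{\beta_{n}(S)}$, and by grouping
subsets according to the residue of $\beta_{n}(S)$ modulo $m$ we can write,
modulo $t^{m}-1$,
$$
  Q_{n}(t) \equiv \sum_{j=0}^{m-1} a_{j} \cdot t^{j} \pmod{t^{m}-1}.
$$
Since $\Phi_{m}(t)$ divides $t^{m}-1$, to show $\Phi_{m} \mid Q_{n}(t)$ it
suffices to show that $e^{2\pi i/m}$ is a root of the polynomial
$P(t) = \sum_{j=0}^{m-1} a_{j} t^{j}$; this is the content of the Fact quoted
at the end of Section~\ref{section_preliminaries}. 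So the whole problem
reduces to evaluating $P(\zeta)$ at the primitive $m$th root of unity
$\zeta = e^{2\pi i/m}$ and checking that the two stated symmetries force it
to vanish.

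\textbf{Using the two symmetries.} The plan is to exploit
equation~\eqref{equation_vertical} and equation~\eqref{equation_horizontal}
separately. First, the ``vertical'' symmetry $a_{j} = a_{-j}$ (indices read
modulo $m$) makes $P(\zeta)$ real: pairing the term $j$ with the term $m-j$
gives $a_{j}(\zeta^{j} + \zeta^{-j}) = 2 a_{j}\cos(2\pi j/m)$, so
$P(\zeta) = a_{0} + (-1)^{?} a_{m/2} + \sum 2 a_{j}\cos(2\pi j/m) \in \Rrr$
(here $\zeta^{m/2} = -1$ since $m$ is even). Next, the ``horizontal''
symmetry $a_{j} = a_{m/2 - j}$ pairs the term $j$ with the term $m/2 - j$;
since $\zeta^{m/2} = -1$, we get
$a_{j}\zeta^{j} + a_{m/2-j}\zeta^{m/2-j}
  = a_{j}\bigl(\zeta^{j} + \zeta^{m/2}\zeta^{-j}\bigr)
  = a_{j}\zeta^{j}\bigl(1 - \zeta^{m/2-2j}\cdot(-1)\,\zeta^{\,?}\bigr)$.
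The cleaner way to run this is: reindex $P(\zeta)$ by $j \mapsto m/2 - j$ and
use $\zeta^{m/2} = -1$ to obtain $P(\zeta) = -\,\overline{P(\zeta)}$ or
$P(\zeta) = -P(\zeta)$ after also invoking the first symmetry, forcing
$P(\zeta) = 0$. Concretely: by $a_{m/2-j}=a_{j}$,
$$
  P(\zeta) = \sum_{j} a_{m/2-j}\,\zeta^{j}
           = \sum_{k} a_{k}\,\zeta^{m/2-k}
           = \zeta^{m/2}\sum_{k} a_{k}\,\zeta^{-k}
           = -\sum_{k} a_{k}\,\zeta^{-k}
           = -\,\overline{P(\zeta)},
$$
and since the vertical symmetry already gave $\overline{P(\zeta)} = P(\zeta)$,
we conclude $P(\zeta) = -P(\zeta)$, hence $P(\zeta) = 0$.

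\textbf{Finishing.} Having shown $P(\zeta)=0$, the Fact gives
$\Phi_{m} \mid P(t)$, and since $P(t) \equiv Q_{n}(t) \pmod{t^{m}-1}$ and
$\Phi_{m} \mid t^{m}-1$, we get $\Phi_{m} \mid Q_{n}(t)$. The only mild care
needed is bookkeeping of indices modulo $m$ (the statement writes $a_{j}$ for
all integers $j$, so $a_{-j}$, $a_{m/2-j}$, etc. are all understood cyclically
with $a_{j} = a_{j+m}$), and the parity observation that $m$ even makes
$\zeta^{m/2} = -1$, which is exactly what couples the horizontal symmetry to a
sign change. I do not anticipate a genuine obstacle here; the one place to be
slightly careful is making sure the reindexing $j \mapsto m/2-j$ is a genuine
permutation of $\Zzz/m\Zzz$ and that the middle term $j = m/4$ (present only
when $4 \mid m$) is handled by the same pairing argument rather than as an
exception — in that case the horizontal symmetry relates $a_{m/4}$ to itself
and contributes $a_{m/4}\,\zeta^{m/4}$, which is purely imaginary and hence
must already vanish by the reality of $P(\zeta)$, consistent with $P(\zeta)=0$.
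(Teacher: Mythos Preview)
Your argument is correct and is essentially the same as the paper's. The paper substitutes the primitive $m$th root of unity $\omega$ into $Q_{n}$ and observes the four-term identity $\omega^{-j}+\omega^{j}+\omega^{m/2-j}+\omega^{m/2+j}=0$, which packages both symmetries at once; you instead apply the two symmetries sequentially (the vertical one forces $P(\zeta)\in\Rrr$, the horizontal one forces $P(\zeta)=-\overline{P(\zeta)}$), reaching the same conclusion $P(\zeta)=0$. Your reduction via $P(t)\equiv Q_{n}(t)\pmod{t^{m}-1}$ and the Fact is exactly how the paper (implicitly) passes from $Q_{n}(\omega)=0$ to $\Phi_{m}\mid Q_{n}(t)$, so there is no substantive difference in approach---only in bookkeeping. (You may want to tidy the exploratory lines with ``$(-1)^{?}$'' before the clean argument, but the proof itself is sound.)
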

\begin{proof}
Consider the primitive $m$th root of unity
$\omega=e^{i \pi\slash m}$.  In order for $\Phi_{m}$ to be a factor of 
$Q_{n}(t)$, we must have $Q_{n}(\omega)=0$.
Since $\omega^{m} = 1$, we need to show 
$$ Q_{n}(\omega)
   =
\sum_{S\subseteq[n-1]} \omega^{\beta_n(S)}
   =
   a_{0} + a_{1} \cdot \omega + a_{2} \cdot \omega^{2} +
          \cdots +a_{m-1} \cdot \omega^{m-1}$$
is zero.
By reflection in the real and the imaginary axis in
the complex plane we have
$\omega^{-j} + \omega^{j} + \omega^{m/2-j} + \omega^{m/2+j} = 0$,
from which the result follows.
\end{proof}

Assume that $s$ is an odd positive integer.
We consider which values of $s$ such that
the $4s$th cyclotomic polynomial, $\Phi_{4s}$, divides the descent set polynomial $Q_{n}(t)$
when $n$ is a power of $2$.

\begin{theorem}
\label{theorem_one_digit}
Let $n=2^{a}$ where $a \geq 2$.
Assume that $s$ is an odd integer such that
$s$ divides the central binomial coefficient
$\binom{n}{n/2}$
and
$s$ divides $\binom{n}{k}$ for some $k \neq n/2$.
Then the cyclotomic polynomial $\Phi_{4s}(t)$
divides the descent set polynomial $Q_{n}(t)$.
\end{theorem}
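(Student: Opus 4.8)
The plan is to verify the two hypotheses of Lemma~\ref{factor_lemma} for $m = 4s$: writing $a_j = a_{4s,j}$ for the number of $S \subseteq [n-1]$ with $\beta_n(S) \equiv j \bmod 4s$, I will establish the ``vertical'' identity $a_j = a_{-j}$ and the ``horizontal'' identity $a_j = a_{2s-j}$ for all $j$. Note $4s$ is even, so the lemma then immediately gives that $\Phi_{4s}$ divides $Q_n(t)$.

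For the vertical identity I would exhibit a single element $k$ with $4s \mid \binom{n}{k}$ and quote Lemma~\ref{lemma_flip_sign}. The hypothesis supplies some $k \neq n/2$ with $s \mid \binom{n}{k}$; since $n = 2^{a}$ and $1 \leq k \leq n-1$ with $k \neq 2^{a-1}$, the $2$-adic valuation of $\binom{n}{k}$ equals the number of carries in adding $k$ and $n-k$ in base $2$ (Kummer's theorem, which is just the ``no carries'' language already in Section~\ref{section_preliminaries}), and this is $a - v_2(k) \geq 2$. Hence $4 \mid \binom{n}{k}$, and since $s$ is odd, $4s = \operatorname{lcm}(4,s)$ divides $\binom{n}{k}$; Lemma~\ref{lemma_flip_sign} then yields $a_j = a_{-j}$.

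For the horizontal identity — which I expect to be the crux — I would apply MacMahon's Multiplication Theorem at the splitting point $k = n/2 = 2^{a-1}$ (legitimate since $a \geq 2$), obtaining
$$\beta_n(S) + \beta_n(S \triangle \{n/2\}) = \binom{n}{n/2} \cdot \beta_{n/2}\bigl(S \cap [n/2-1]\bigr) \cdot \beta_{n/2}\bigl(S \cap [n/2+1,n-1] - n/2\bigr).$$
The key observations are that $n/2 = 2^{a-1}$ is again a power of $2$, so by Theorem~\ref{theorem_binary_exp}(a) both $\beta_{n/2}$-factors on the right are odd and hence so is their product; and that $v_2\binom{n}{n/2} = a-(a-1) = 1$ while $s \mid \binom{n}{n/2}$ with $s$ odd, forcing $\binom{n}{n/2} = 2s\cdot w$ with $w$ odd. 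Thus the right-hand side is $2s$ times an odd integer, which is congruent to $2s$ modulo $4s$. Therefore $\beta_n(S) + \beta_n(S \triangle \{n/2\}) \equiv 2s \bmod 4s$ for every $S$, so $S \mapsto S \triangle \{n/2\}$ is an involution carrying $\{S : \beta_n(S) \equiv j\}$ bijectively onto $\{S : \beta_n(S) \equiv 2s-j\}$, giving $a_j = a_{2s-j}$. Lemma~\ref{factor_lemma} then completes the argument.

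The only delicate point is the exact $2$-adic bookkeeping: one must separate the fact that $\binom{n}{n/2}$ is divisible by $2$ but not by $4$ from the fact that $\binom{n}{k}$ is divisible by $4$ for every other admissible $k$, and confirm that the two small factors produced by MacMahon's theorem at $k=n/2$ are indeed governed by Theorem~\ref{theorem_binary_exp}(a) because $n/2$ is itself a power of two. Everything else is routine.
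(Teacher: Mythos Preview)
Your proposal is correct and follows essentially the same argument as the paper's proof: apply MacMahon at $k=n/2$ together with Kummer's theorem and Theorem~\ref{theorem_binary_exp}(a) to obtain $a_j=a_{2s-j}$, and use the carry count for the auxiliary $k\neq n/2$ to get $4s\mid\binom{n}{k}$ and hence $a_j=a_{-j}$ via Lemma~\ref{lemma_flip_sign}, then invoke Lemma~\ref{factor_lemma}. The only cosmetic differences are the order in which you treat the two symmetries and your explicit formula $v_2\binom{2^a}{k}=a-v_2(k)$, where the paper simply notes there are at least two carries.
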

\begin{proof}
Observe that there is one carry in the addition
$n/2 + n/2 = n$ in base $2$. Hence by Kummer's
theorem,
see~\cite[Pages 115--116]{Kummer},
$2$ is the largest $2$-power dividing
$\binom{n}{n/2}$. In other words,
$\binom{n}{n/2} \equiv 2 \bmod 4$.
Combining this with the fact that $s$ divides this central
binomial coefficient, we have
$\binom{n}{n/2} \equiv 2s \bmod 4s$.
Thus, MacMahon's multiplication theorem gives that
\begin{align*}
\beta_{n}(S)+\beta_{n}(S \triangle \{n/2\})
&=
\binom{n}{n/2}
\cdot
\beta_{n/2}(S \cap [1,n/2-1])
\cdot
\beta_{n/2}(S \cap [n/2+1, n-1]-n/2) .
\end{align*}
Since $\beta_{n/2}$ only takes odd values as shown in Theorem~\ref{theorem_binary_exp}(a), we obtain
that
\begin{align*}
\beta_{n}(S)+\beta_{n}(S \triangle \{n/2\})
& \equiv
2  s \bmod 4  s.
\end{align*}
Thus, the statement $\beta_{n}(S) \equiv j \bmod 4s$ is equivalent to 
$\beta_{n}(S \triangle \{n/2\}) \equiv 2s-j \bmod 4s$. 
That is,  the map
$S \longmapsto S \triangle \{n/2\}$
yields a bijection that proves $a_{j}=a_{2s-j}$
for all $j$.

Next since the addition $k + (n-k) = n$ in base $2$ has
at least two carries, we obtain that $2^{2} = 4$ divides the
binomial coefficient $\binom{n}{k}$.
Hence, $4  s$ divides $\binom{n}{k}$ 
and by Lemma~\ref{lemma_flip_sign}
the equality $a_{j} = a_{-j}$ holds for all $j$.
We now have that both equations~\eqref{equation_vertical}
and~\eqref{equation_horizontal}
from Lemma~\ref{factor_lemma} are upheld; thus,
the cyclotomic polynomial $\Phi_{4s}$ divides $Q_{n}(t)$.
\end{proof}

\begin{remark}
{\rm
The case $n = 32 = 2^{5}$ and $k=15$ is particularly nice.
We have that
$\binom{32}{16}
  = 2 \cdot 3^{2} \cdot  5 \cdot 17 \cdot 19 \cdot 23 \cdot 29 \cdot 31$
and
$\binom{32}{15} = 16/17 \cdot \binom{32}{16}$.
Hence, for any divisor $s$ of
$3^{2} \cdot  5 \cdot 19 \cdot 23 \cdot 29 \cdot 31$
and there are $96$ such divisors,
we obtain the cyclotomic factor $\Phi_{4s}$
of $Q_{32}(t)$.
Furthermore, we do not obtain any more cyclotomic factors
by changing $k$,
that is, all the the odd divisors of $\binom{32}{k}$ for $k \leq 14$
are divisors of $\binom{32}{15}$.
}
\label{remark_32}
\end{remark}

\begin{table}
\begin{center}
  \begin{tabular}{ | c | c | c | c | l | l |}
  \hline
$n$ & $s$ & $k$ & Chebikin et & Our \\ 
&&& al.\ statement & statement \\ \hline
    \hline
4 & 1 & 1 & Thm.\ 3.5 & Thm.~\ref{theorem_one_digit}   \\ \hline
8 & 1 & 1 & Thm.\ 3.5 & Thm.~\ref{theorem_one_digit}   \\ \hline
8 & 7 & 2 & & Thm.~\ref{theorem_one_digit}   \\ \hline
16 & 1 & 1 & Thm.\ 3.5 & Thm.~\ref{theorem_one_digit}  \\ \hline
16 & 5, 11, 13, 55 & 7 & & Thm.~\ref{theorem_one_digit}  \\
& 65, 143, 715 && & \\ \hline
16 & 3, 15 & 5 & & Thm.~\ref{theorem_one_digit}  \\ \hline
16 & 39  & 2 & & Thm.~\ref{theorem_one_digit}  \\ \hline
32 & all the divisors & 15 & & Rem.~\ref{remark_32} \\
     & of 17678835 & & & \\ \hline
\end{tabular}
\end{center}
\caption{Examples of cyclotomic factors of 
$Q_{n}(t)$ of the form $\Phi_{4s}$ where $n=2^{a}$.}
\label{table_factors_1}
\end{table}

See Table~\ref{table_factors_1} for examples of cyclotomic factors of $Q_{2^a}(t)$ 
that are explained by Theorem~\ref{theorem_one_digit}, along the $k$ value in 
which $s$ divides $\binom{2^a}{k}$.

\section{Two binary digits}
\label{section_two_digits}

Now we state the result that lets us deduce
cases when the cyclotomic polynomial
$\Phi_{2s}$, where $s$ is an odd positive integer,
 is a factor of the descent set polynomial
$Q_{n}(t)$ when $n$ has 
two 1's in its binary expansion.

\begin{theorem}
\label{theorem_two_digits}
Let $n=2^{b}+2^{a}$, where $b>a$
and $s$ is an odd positive integer.
Assume that $s$ divides~$\binom{n}{2^{a}}$.
Furthermore, assume there is an integer $k$
which is non-essential in base $2$
(that is, $k \neq 2^{a}, 2^{b}$)
and such that $s$ divides $\binom{n}{k}$.
Then the cyclotomic polynomial $\Phi_{2s}$
is a factor of $Q_{n}(t)$.
\end{theorem}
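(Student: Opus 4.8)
The plan is to verify the two hypotheses of Lemma~\ref{factor_lemma} for the even modulus $m = 2s$, namely equation~\eqref{equation_vertical}, $a_{j} = a_{-j}$, and equation~\eqref{equation_horizontal}, which here reads $a_{j} = a_{s-j}$ since $m/2 = s$. The argument runs parallel to the proof of Theorem~\ref{theorem_one_digit}: one of the two symmetries will come from flipping the non-essential element $k$, and the other from MacMahon's multiplication theorem applied at the essential element $2^{a}$.

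For equation~\eqref{equation_vertical} I would use the non-essential element $k$. Since $k$ is non-essential for $n$ in base $2$, Lemma~\ref{lemma_binom} gives $2 \mid \binom{n}{k}$; combined with the hypothesis $s \mid \binom{n}{k}$ and the fact that $s$ is odd, this yields $2s \mid \binom{n}{k}$. Then Lemma~\ref{lemma_flip_sign}, applied with $m = 2s$, gives $a_{j} = a_{-j}$ for all $j$.

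For equation~\eqref{equation_horizontal} I would apply MacMahon's multiplication theorem at $k = 2^{a}$. The first step is to pin down $\binom{n}{2^{a}} = \binom{2^{b}+2^{a}}{2^{a}}$ modulo $2s$: adding $2^{a}$ and $2^{b}$ in base $2$ involves no carries because $b \neq a$, so by Kummer's theorem $\binom{n}{2^{a}}$ is odd; since $s$ divides it and $s$ is odd, it follows that $\binom{n}{2^{a}} \equiv s \bmod 2s$. Now MacMahon's theorem gives
\[
  \beta_{n}(S) + \beta_{n}(S \triangle \{2^{a}\})
  =
  \binom{n}{2^{a}} \cdot \beta_{2^{a}}(S \cap [2^{a}-1]) \cdot \beta_{2^{b}}(S \cap [2^{a}+1, n-1] - 2^{a}),
\]
and since $2^{a}$ and $n - 2^{a} = 2^{b}$ are each powers of $2$, Theorem~\ref{theorem_binary_exp}(a) forces both $\beta$-factors on the right to be odd. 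Hence the right-hand side is congruent to $s$ modulo $2s$, so $\beta_{n}(S) \equiv j \bmod 2s$ holds if and only if $\beta_{n}(S \triangle \{2^{a}\}) \equiv s - j \bmod 2s$. The involution $S \mapsto S \triangle \{2^{a}\}$ is a bijection matching the subsets counted by $a_{j}$ with those counted by $a_{s-j}$, which is exactly equation~\eqref{equation_horizontal}. With both equations established, Lemma~\ref{factor_lemma} concludes that $\Phi_{2s}$ divides $Q_{n}(t)$.

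The main (indeed essentially the only) delicate point is computing $\binom{n}{2^{a}}$ modulo $2s$ rather than merely modulo $s$: it is the parity information supplied by Kummer's theorem that makes this residue exactly $s$, and that is precisely what is needed so that flipping at $2^{a}$ realizes the horizontal symmetry $a_{j} = a_{s-j}$. Everything else is a routine assembly of Lemmas~\ref{lemma_binom}, \ref{lemma_flip_sign} and~\ref{factor_lemma} together with Theorem~\ref{theorem_binary_exp}(a).
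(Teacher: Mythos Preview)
Your proof is correct and essentially identical to the paper's own argument: both establish $a_{j}=a_{-j}$ via Lemma~\ref{lemma_flip_sign} using that $2s$ divides $\binom{n}{k}$, and both establish $a_{j}=a_{s-j}$ via MacMahon's theorem at $2^{a}$ together with Kummer's theorem and Theorem~\ref{theorem_binary_exp}(a), then invoke Lemma~\ref{factor_lemma}. The only difference is the order in which you treat the two symmetries.
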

\begin{proof}
Since there are no carries in the addition
$2^{b} + 2^{a} = n$ in base $2$, by Kummer's theorem
we know that $\binom{n}{2^{a}}$ is odd.
Combining this fact with the congruence modulo $s$,
we obtain $\binom{n}{2^{a}} \equiv s \bmod 2 s$.
Therefore, by MacMahon's multiplication theorem,
we have that 
\begin{align}
\beta_{n}(S) + \beta_{n}(S \triangle \{2^{a}\})
   &=
\binom{n}{2^{a}}
  \cdot
\beta_{2^{a}}(S \cap [2^{a}-1])
  \cdot
\beta_{2^{b}}(S \cap [2^{a}+1, n-1]-2^{a})  \notag \\
  & \equiv
s \bmod 2  s,
\label{equation_2a_2b}
\end{align}
since both $\beta_{2^{a}}$ and $\beta_{2^{b}}$
are odd.
Hence, we use the bijective map $S\longmapsto S\triangle \{2^a\}$ to 
conclude that $a_{j} = a_{s-j}$ for all $j$.

Since the addition
$k + (n-k)$ has at least one carry in base $2$
the binomial coefficient $\binom{n}{k}$ is even.
Hence $\binom{n}{k}$ is divisible by $2  s$.
By Lemma~\ref{lemma_flip_sign}
the inequality $a_{j} = a_{-j}$ holds for all $j$.
Combining these two equalities using
Lemma~\ref{factor_lemma},
the result follows.
\end{proof}

We begin by two remarkable examples.
\begin{remark}
{\rm
Consider the case $n=18 = 2^{4} + 2^{1}$ and $k=4$.
Note that $\binom{18}{2} = 3^{2} \cdot 17 = 153$.
Furthermore note that
$\binom{18}{4} = 2^{2} \cdot 5 \cdot \binom{18}{2}$.
Hence for any divisor $s$ of $153$
we obtain that the cyclotomic polynomial $\Phi_{2s}$
divides the descent set polynomial $Q_{18}(t)$.
This argument explains all
the cyclotomic factors found in the descent set
polynomial $Q_{18}(t)$; see Table~\ref{table_P}.
}
\label{remark_18}
\end{remark}

\begin{remark}
{\rm
Consider the case $n=20 = 2^{4} + 2^{2}$ and $k=6$.
Now we have $\binom{20}{4} = 3 \cdot 5 \cdot 17 \cdot 19 = 4845$
and
$\binom{20}{6} = 2^{3} \cdot \binom{20}{4}$.
Hence for any divisor $s$ of $4845$
the cyclotomic polynomial $\Phi_{2s}$
is a factor in the descent set polynomial $Q_{20}(t)$,
explaining all the $16$ known cyclotomic factors;
see the longest row in Table~\ref{table_P}.  
}
\label{remark_20}
\end{remark}

We now continue to study the case when the
integer $s$ is an odd prime $p$.  Recall from
Lemma~\ref{lemma_binom} that $k$ being a non-essential
element in base $p$ implies that $p$ divides $\binom{n}{k}$.
Hence, to satisfy the assumptions in Theorem~\ref{theorem_two_digits} 
for this case, we need to show that $2^a$ and $k$ are 
non-essential in base~$p$ and that $k$ is non-essential in base 2.

Note however that for two relative prime integers $p$ and $q$,
a carry in the addition $k + (n-k) = n$ in base $p \cdot q$
does not imply a carry for this addition in both base $p$ and $q$.
An example the addition $12+3 = 15$.
In base $15$ there is a carry, where as in base $3$ there
is no carry.

The following lemma is useful in determining when $2^{a}$ is non-essential for $n$ in base $p$, where $p$ is prime,
in order to apply Theorem~\ref{theorem_two_digits}.  Although rarely cited during the subsequent arguments
since we often need the actual value of $i+j \bmod p$ instead of only the fact that it is at least $p$, it provides
reasoning for finding particular values of $n$.

\begin{lemma}
For $n=2^{a}+2^{b}$,
if $2^{a}\equiv i \bmod p$ and $2^{b} \equiv j \bmod p$ where 
$1\leq i,j\leq p-1$ and $i+j\geq p$,
then $2^{a}$ is non-essential for $n$ in base $p$.
\label{lemma_non-essential}
\end{lemma}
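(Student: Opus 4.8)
The plan is to work directly with the base-$p$ digit expansions of $n$, $2^a$, and $2^b$ and to produce an explicit digit position where a carry occurs. Recall the interpretation from the excerpt: $2^a$ is essential for $n$ in base $p$ precisely when adding $2^a$ and $n-2^a = 2^b$ in base $p$ produces no carries. So the statement to establish is that the hypothesis $i+j\ge p$, where $i\equiv 2^a$ and $j\equiv 2^b$ modulo $p$ with $1\le i,j\le p-1$, forces a carry somewhere in the addition $2^a + 2^b$ in base $p$.

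First I would look at the lowest-order base-$p$ digit, i.e.\ position $0$. By definition the $0$th digit of $2^a$ in base $p$ is $2^a \bmod p = i$, and similarly the $0$th digit of $2^b$ is $j$. Since $1\le i,j\le p-1$, both digits lie in $\{1,\dots,p-1\}$, and by hypothesis their sum $i+j$ is at least $p$. This means that in the column addition at position $0$ the two digits sum to a value that is $\ge p$, so a carry is generated out of position $0$ into position $1$. That is exactly the statement that the addition $2^a + 2^b = n$ has a carry in base $p$, hence $2^a$ is non-essential for $n$ in base $p$ by the carry characterization given just after the definition of essential elements.

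The only subtlety — and really the only thing worth spelling out — is the edge case $a = 0$ or $b = 0$: if, say, $a=0$ then $2^a = 1$ and its base-$p$ expansion is just the single digit $1$, which is consistent with $i = 1$; nothing changes in the argument since we only ever used the value of the $0$th digit. One should also note that $i$ and $j$ are genuinely the $0$th digits, not merely residues that could be affected by borrowing, because a digit of a nonnegative integer in base $p$ is literally its residue modulo $p$. There is no real obstacle here; the proof is a one-line observation about the units digit once the carry-free characterization of essentiality is invoked, and I would simply write:

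\begin{proof}
By the remark following the definition of essential elements, $2^{a}$ is essential for $n$ in base $p$ if and only if the addition $2^{a} + 2^{b} = n$ in base $p$ has no carries. The $0$th base-$p$ digit of $2^{a}$ equals $2^{a} \bmod p = i$ and the $0$th base-$p$ digit of $2^{b}$ equals $2^{b} \bmod p = j$. Since $i + j \geq p$, adding these two digits produces a carry out of position~$0$. Hence the addition $2^{a} + 2^{b} = n$ has a carry in base $p$, so $2^{a}$ is non-essential for $n$ in base~$p$.
\end{proof}
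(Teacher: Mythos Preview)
Your proof is correct and takes essentially the same approach as the paper: both arguments focus on the $0$th base-$p$ digit and observe that the hypothesis $i+j\ge p$ forces a failure of essentiality there. The only cosmetic difference is that the paper invokes the digit-comparison definition directly (showing the last digit $i$ of $2^{a}$ exceeds the last digit $(i+j)\bmod p$ of $n$), whereas you use the equivalent carry-free characterization; these are two phrasings of the same observation.
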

\begin{proof}
Since $i,j\leq p-1$ and $i+j\geq p$, we have $i > i+j \bmod p$.  Therefore, the last digit of the
base $p$ expansion of $2^{a}$ is larger than the last digit of the base $p$ expansion of $n$,
causing $2^{a}$ to be non-essential in base $p$.
\end{proof}

The following theorems provide conditions for the prime $p$,
the multiplicative order $g$ of~$2$ in $\Zzz_{p}^{*}$,
and the exponents $a$
and~$b$ that allow Theorem~\ref{theorem_two_digits} to be applied to show that $\Phi_{2p}$ is a factor of~$Q_{n}(t)$.

\begin{theorem}
\label{theorem_even_order_one}
Assume that 2 has order $g$ in the multiplicative group $\Zzz_{p}^{*}$
where $g$ is even.  Let $n=2^{b}+2^{a}$ where we assume $b>a$ and $n\geq 9$.
If we have $\{a,b\}\equiv \{0,{g}/{2}\} \bmod g$,
then $2^{a}$ is non-essential in base $p$.
Furthermore, the element $7$ is non-essential for
both base $2$ and base $p$.
Hence~$\Phi_{2p}$ is a factor of~$Q_{n}(t)$.
\end{theorem}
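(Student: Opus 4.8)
The plan is to check the two hypotheses of Theorem~\ref{theorem_two_digits} with $s$ equal to the prime $p$ and with the auxiliary non-essential element taken to be $k = 7$. Once that is done, the claimed non-essentiality statements will have been established along the way, and the divisibility $\Phi_{2p} \mid Q_{n}(t)$ follows at once. Concretely, I need to verify: (i) $2^{a}$ is non-essential for $n$ in base $p$ (equivalently, by Lemma~\ref{lemma_binom}, $p$ divides $\binom{n}{2^{a}}$); (ii) $7$ is non-essential for $n$ in base $2$, and $7$ is an admissible element, i.e.\ $7 \neq 2^{a}, 2^{b}$ and $1 \leq 7 \leq n-1$; and (iii) $7$ is non-essential for $n$ in base $p$ (equivalently $p$ divides $\binom{n}{7}$).

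For step (i), first note that $p$ is an odd prime, so $-1 \not\equiv 1 \bmod p$. Since $g$ is the multiplicative order of $2$ modulo $p$ and $g$ is even, the element $2^{g/2}$ satisfies $(2^{g/2})^{2} \equiv 1 \bmod p$ but $2^{g/2} \not\equiv 1 \bmod p$, hence $2^{g/2} \equiv -1 \bmod p$. Because the residue $2^{m} \bmod p$ depends only on $m$ modulo $g$, the hypothesis $\{a,b\} \equiv \{0, g/2\} \bmod g$ gives $\{2^{a}, 2^{b}\} \equiv \{1, -1\} \bmod p$, so that $n = 2^{a} + 2^{b} \equiv 0 \bmod p$. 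Therefore the last base-$p$ digit of $n$ is $0$, while the last base-$p$ digit of $2^{a}$ is $2^{a} \bmod p \in \{1, p-1\}$, which is positive; hence $2^{a}$ is non-essential for $n$ in base $p$. This is exactly Lemma~\ref{lemma_non-essential} applied with $\{i,j\} = \{1, p-1\}$, for which $i + j = p \geq p$.

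For steps (ii) and (iii), observe that $7 = 2^{2} + 2^{1} + 2^{0}$ has three $1$'s in its binary expansion whereas $n = 2^{b} + 2^{a}$ has only two, so some binary digit of $7$ is not dominated by the corresponding digit of $n$; thus $7$ is non-essential for $n$ in base $2$. Since $7$ is not a power of $2$ it is distinct from $2^{a}$ and $2^{b}$, and the hypothesis $n \geq 9$ yields $1 \leq 7 \leq n-1$. For base $p$: the multiplicative order of $2$ modulo $7$ equals $3$, which is odd, so the assumption that $g$ is even forces $p \neq 7$; consequently $7 \bmod p \neq 0$, and the last base-$p$ digit of $7$ is positive while that of $n$ is $0$ by step (i). Hence $7$ is non-essential for $n$ in base $p$, and by Lemma~\ref{lemma_binom} we obtain $p \mid \binom{n}{7}$.

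With (i)--(iii) in hand, Theorem~\ref{theorem_two_digits} applies with $s = p$ and $k = 7$ and gives that $\Phi_{2p}$ is a factor of $Q_{n}(t)$. The only step that is at all delicate is the base-$p$ non-essentiality of $7$: one must notice that the hypothesis ``$g$ even'' rules out the single exceptional prime $p = 7$, and that the hypothesis $n \geq 9$ is precisely what keeps $7$ inside the ground set $[n-1]$; everything else reduces to short digit computations.
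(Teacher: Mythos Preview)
Your proof is correct and follows essentially the same route as the paper: you establish $2^{g/2}\equiv -1\bmod p$, deduce $n\equiv 0\bmod p$ so that $2^{a}$ is non-essential in base $p$, and then verify that $7$ is non-essential in both bases (using that $g$ even forces $p\neq 7$) before invoking Theorem~\ref{theorem_two_digits} with $s=p$ and $k=7$. Your write-up is slightly more explicit than the paper's in checking that $7\in[n-1]$ and $7\neq 2^{a},2^{b}$, but the argument is otherwise identical.
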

\begin{proof}
Since
$2^{g/2} \not\equiv 1 \bmod p$
and
$(2^{g/2} - 1) \cdot (2^{g/2} + 1) = 2^{g} - 1 \equiv 0 \bmod p$
we know that
$2^{g/2}\equiv -1 \bmod p$ using that $p$ is a prime.
Hence the last digits of $2^a$ and $2^b$ in their
base $p$ expansions are $1$ and $p-1$, in some order.
Thus, we have 
$n=2^{b}+2^{a} \equiv 1+(p-1) \equiv 0 \bmod p$,
that is, the last digit in the base $p$ expansion of~$n$ is 0.
Hence $2^{a}$ is non-essential
in base $p$.

Notice that $7$ has three non-zero digits in its binary expansion compared 
to only $2$ such digits for~$n$,
making $7$ non-essential for $n$ in base $2$.
Since the order of $2$ in $\Zzz_{7}^{*}$ is $3$, which is odd,
we have $p \neq 7$.
Finally,
the last digit of the base $p$ expansion of~$7$ is non-zero
for all odd primes $p \neq 7$.
Hence $7$ is also non-essential for $n$ in base $p$,
completing the result.
\end{proof}

\begin{remark}
{\rm
The assumption in Theorem~\ref{theorem_even_order_one} of $n\geq 9$ was needed in order for $7$ 
to always be a non-essential element, but note that the theorem can still be applied when $n=6$ if $p=3$.  
The element~$5$ is instead chosen as the non-essential element in base 2 and in base $p$.
}
\label{remark_one}
\end{remark}

\begin{theorem}
\label{theorem_even_order_two}
Assume that 2 has order $g$ in the multiplicative group $\Zzz_{p}^{*}$
where $g$ is even.  Let $n=2^{b}+2^{a}$ where we assume $b>a$ and $n>2p-1$.
If we have $a\equiv b\equiv {g}/{2} \bmod g$,
then $2^{a}$ is non-essential in base $p$.  Furthermore, the element $2p-1$ is
non-essential for both base $2$ and base $p$.  
Hence~$\Phi_{2p}$ is a factor of~$Q_{n}(t)$.
\end{theorem}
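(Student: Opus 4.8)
The plan is to follow the pattern of the proof of Theorem~\ref{theorem_even_order_one}, the only difference being that now both exponents $a$ and $b$ lie in the residue class $g/2$ modulo $g$, rather than being split between $0$ and $g/2$. First I would record the same arithmetic fact as before: since $g$ is even and equals the multiplicative order of $2$, the residue $2^{g/2}$ satisfies $(2^{g/2})^{2}\equiv 1\bmod p$ but $2^{g/2}\not\equiv 1\bmod p$, and because $p$ is prime the only other square root of $1$ is $-1$, so $2^{g/2}\equiv -1\bmod p$. From $a\equiv b\equiv g/2\bmod g$ this gives $2^{a}\equiv 2^{b}\equiv p-1\bmod p$, hence $n=2^{b}+2^{a}\equiv 2(p-1)\equiv p-2\bmod p$. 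Thus the last base-$p$ digit of $n$ is $p-2$, while the last base-$p$ digit of $2^{a}$ is $p-1>p-2$; this is precisely the situation of Lemma~\ref{lemma_non-essential} with $i=j=p-1$, so $2^{a}$ is non-essential for $n$ in base $p$, and Lemma~\ref{lemma_binom} then yields $p\mid\binom{n}{2^{a}}$. This is the first hypothesis of Theorem~\ref{theorem_two_digits} with $s=p$.

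Next I would verify that $k=2p-1$ supplies the second hypothesis. Since $n>2p-1$ we have $1\le k\le n-1$, so $k$ is a legitimate index. In base $p$ the digits of $k$ are $(1,p-1)$, so its last digit $p-1$ again exceeds the last base-$p$ digit $p-2$ of $n$, making $k$ non-essential for $n$ in base $p$; hence $p\mid\binom{n}{k}$ by Lemma~\ref{lemma_binom}. For base $2$: because the order $g$ of $2$ in $\Zzz_{p}^{*}$ is at least $2$, we have $a\ge g/2\ge 1$, so $2^{a}$ and $2^{b}$ are both even while $2p-1$ is odd; thus $k\ne 2^{a},2^{b}$, which for an $n$ with two binary digits is exactly the statement that $k$ is non-essential for $n$ in base $2$. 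With both hypotheses of Theorem~\ref{theorem_two_digits} in hand, we conclude that $\Phi_{2p}$ is a factor of $Q_{n}(t)$.

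I do not expect a serious obstacle here; the argument is entirely a matter of tracking units digits in base $2$ and base $p$, as in the previous theorem. The one point needing a moment's care is the base-$2$ claim for $k$, which rests on $a\ge 1$, i.e.\ on $2$ having order at least $2$ modulo $p$ — true for every odd prime, including the boundary case $p=3$ with $g=2$, where the smallest admissible $n$ is $10$. It is also worth checking that $p-2\ge 1$, so that the last base-$p$ digit of $n$ really is $p-2$ rather than $0$, which holds since $p\ge 3$; this is what keeps $2^{a}$ and $k$ from being accidentally essential in base $p$.
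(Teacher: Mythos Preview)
Your proof is correct and follows essentially the same approach as the paper's: both compute $n\equiv p-2\bmod p$ from $2^{a}\equiv 2^{b}\equiv p-1$, compare last base-$p$ digits to show $2^{a}$ and $2p-1$ are non-essential in base $p$, and use the parity observation (your $a\ge g/2\ge1$, the paper's ``$a,b\neq0$'') to show $2p-1$ is non-essential in base $2$. Your extra remarks that $p-2\ge1$ and that $k\neq2^{a},2^{b}$ exhausts the base-$2$ essentiality condition are welcome sanity checks the paper leaves implicit.
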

\begin{proof}
Similar to part of the previous proof, we have in this case that 
$2^{a}\equiv 2^{b}\equiv 2^{{g}/{2}}\equiv p-1 \bmod p$.  Therefore, 
$n=2^{a}+2^{b}\equiv (p-1)+(p-1)\equiv p-2 \bmod p$.
Thus, the last digit of the base $p$ expansion of $n$ is $p-2$ while the last digit
of the expansion of $2^{a}$ is $p-1$, making $2^{a}$ be non-essential in base $p$.

Since $2p-1$ is odd, the last digit in its base~$2$ expansion is~$1$,
but the last digit of the base~$2$
expansion of $n$ is $0$ because $a,b \neq 0$.
Hence $2p-1$ is non-essential in base $2$. 
Additionally,
$2p-1\equiv p-1 > p-2 \bmod p$,
thus it is non-essential in base $p$ as well.
\end{proof}

\begin{remark}
{\rm
The equivalence conditions on the exponents
within Theorems~\ref{theorem_even_order_one} 
and~\ref{theorem_even_order_two} are not the only such
conditions that makes the theorem hold true when $g$ is even. 
These are many such conditions, especially if $2$
is a generator of $\Zzz_{p}^{*}$ 
since the powers of $2$ contain every 
possible non-zero value as the last digit,
and all that is needed is for the argument in the 
proof of $2^{a}$ being non-essential in base $p$
is for the sum of these digits 
to be at least $p$, as shown in Lemma~\ref{lemma_non-essential}.
In this case of 
$2$ being a generator of $\Zzz_{p}^{*}$ for $p=2r+1$, there are exactly 
$r\cdot (r+1)$ of pairs of possible exponents modulo~$g$ that will work.
One still needs to find element $k$ that is non-essential 
in base $2$ and in base~$p$.  Finding this~$k$ value is easy if given 
a particular pair of $n$ and $p$ values, but this step causes a further generalization 
of the proof to be difficult.

Table~\ref{equivalencies_table} includes all of
the equivalence conditions modulo the order $g$ for four 
odd primes that lead to $2^{a}$ being non-essential for $n$ in base $p$.  
For examples of finding the non-essential $k$ value,
see Table~\ref{table_factors_2}.
}
\label{remark_two}
\end{remark}

\begin{table}
\begin{center}
\begin{tabular}{ | c | c | c | }
  \hline
   $p$ & $g$ & $\{a,b\} \bmod g$ \\ \hline
    \hline
    3 & 2 & $\{0,1\}, \{1,1\}$   \\ \hline
    5 & 4 & $\{0,2\}, \{1,2\},\{1,3\},\{2,2\},\{2,3\},\{3,3\}$ \\ \hline
    11 & 10 & $\{0,5\},\{1,5\},\{1,6\},\{2,3\},\{2,5\},\{2,6\},\{2,7\},\{3,4\},\{3,3\},\{3,5\},$ \\ & & $\{3,6\},\{3,7\},
   \{3,8\},\{3,9\},\{4,5\},\{4,6\},\{4,7\},\{4,9\},\{5,5\},\{5,6\},$ \\ & & $\{5,7\},\{5,8\},\{5,9\},\{6,6\},\{6,7\},\{6,8\},\{6,9\},\{7,7\},
    \{7,9\},\{9,9\}$ \\ \hline
    17 & 8 & $\{0,4\},\{1,4\},\{1,5\},\{2,4\},\{2,5\},\{2,6\},\{3,4\},\{3,5\}, \{3,6\},\{3,7\},$ \\ & & $\{4,4\},\{4,5\},\{4,6\},\{4,7\},
    \{5,5\},\{5,6\},\{5,7\},\{6,6\},\{6,7\},\{7,7\}$ \\ \hline
\end{tabular}
\end{center}
\caption{Examples of equivalency conditions for small prime numbers.}
\label{equivalencies_table}
\end{table}

\begin{remark}
\label{remark_three}
{\rm
If $2$ has multiplicative order $g$ in $\Zzz_{p}^{*}$,
then its order $G$ in $\Zzz_{p^{\hspace{.03cm}l}}^{*}$
is a divisor of $p^{\hspace{.05cm}l-1} \cdot g$.
The order $g$ gives the length of the repeating sequence 
of the last digit of the base $p$ expansions of the powers $2^{a}$,
and likewise, the order~$G$ gives
the length of the repeating sequence of
the last $l$ digits of those powers of $2$. 
Similar reasoning to Lemma~\ref{lemma_non-essential}
applies when adding together any pair of
digits together, not just the last digit.
Thus, there are equivalencies modulo~$G$
that cause $2^{a}$
to be non-essential in base $p$ because of a carry in 
one of the last $l$ digits.  As an example, when $p=3$ 
the order of $2$ in~$\Zzz_{9}^{*}$ is~$6$, hence
the last two digits of $2^{a}$ cycle through the six values 01, 02, 11, 22, 21 and 12 as $a$ increases.  
Therefore, when $\{a,b\}\equiv \{2,4\}\bmod 6$, the last two digits of $n$ in base 3 are $11+21\equiv 02$, so the
second digit from the right is larger for $2^{a}$ than for $n$, making it non-essential in base $p$.
}
\end{remark}

\begin{theorem}
\label{theorem_case_two}
Let $n=2^{b}+2^{a}$ where we assume $b>a$ and $n\geq 5$, and also
assume that $p>3$.
If we have $a,b\equiv g-1 \bmod g$
where $g$ is the multiplicative order of~$2$,
then $2^{a}$ is non-essential in base $p$.  Furthermore, the element $3$ 
is non-essential in both base 2 and base $p$.
Hence~$\Phi_{2p}$ is a factor of~$Q_{n}(t)$.
\end{theorem}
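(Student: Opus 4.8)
The plan is to mimic the proofs of Theorems~\ref{theorem_even_order_one} and~\ref{theorem_even_order_two}: I will verify the three digit-comparison facts needed to feed Theorem~\ref{theorem_two_digits} with $s=p$ and $k=3$, namely that $2^a$ is non-essential for $n$ in base $p$, that $3$ is non-essential for $n$ in base $2$, and that $3$ is non-essential for $n$ in base $p$. Lemma~\ref{lemma_binom} then turns the last two into $p\mid\binom{n}{3}$ and the first into $p\mid\binom{n}{2^a}$, after which Theorem~\ref{theorem_two_digits} delivers the cyclotomic factor $\Phi_{2p}$ of $Q_{n}(t)$.

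For the first fact I would compute the base-$p$ residues directly. Since $2$ has multiplicative order $g$ in $\Zzz_p^*$ and $a\equiv b\equiv g-1 \bmod g$, we get $2^a\equiv 2^b\equiv 2^{g-1} \bmod p$; and since $2\cdot 2^{g-1}=2^g\equiv 1\bmod p$, this common residue is the inverse of $2$, which for odd $p$ is $(p+1)/2$. Thus $n=2^a+2^b\equiv p+1\equiv 1\bmod p$, so the last base-$p$ digit of $n$ is $1$. Applying Lemma~\ref{lemma_non-essential} with $i=j=(p+1)/2$ (note $1\le (p+1)/2\le p-1$ and $i+j=p+1\ge p$) shows $2^a$ is non-essential for $n$ in base $p$; equivalently, the last base-$p$ digit of $2^a$, being $(p+1)/2\ge 3$, exceeds that of $n$.

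For the element $3$: it is not a power of $2$, hence $3\notin\{2^a,2^b\}$, which (since also $3\le n-1$ by $n\ge 5$) is exactly the statement that $3$ is non-essential for $n$ in base $2$. And because $p>3$, the integer $3$ occupies a single digit in base $p$, so it is essential for $n$ in base $p$ only if the last base-$p$ digit of $n$ is at least $3$; but that digit is $1$, so $3$ is non-essential for $n$ in base $p$ as well. Now Lemma~\ref{lemma_binom} gives $p\mid\binom{n}{2^a}$ and $p\mid\binom{n}{3}$, and Theorem~\ref{theorem_two_digits} (with $s=p$, $k=3$) finishes the proof.

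I do not anticipate a serious obstacle here; the argument is a short residue computation parallel to the two preceding theorems. The one place to be careful is pinning down where the hypothesis $p>3$ is genuinely used: it is needed so that $3<p$ (so that $3$ is a single base-$p$ digit and comparison against the last digit of $n$ suffices) and, relatedly, so that $(p+1)/2\ge 3>1$; for $p=3$ the element $3$ equals $10$ in base $3$ and is typically essential for $n=2^a+2^b$ (for instance $n=34$), so that case must be excluded.
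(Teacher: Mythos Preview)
Your proof is correct and follows essentially the same approach as the paper's: compute the last base-$p$ digit of $n$ as $1$ via $2^{a}+2^{b}\equiv 2\cdot 2^{g-1}\equiv 2^{g}\equiv 1\bmod p$, conclude that $2^{a}$ is non-essential in base $p$, and then verify that $3$ is non-essential in both base $2$ and base $p$ before invoking Theorem~\ref{theorem_two_digits}. Your version is slightly more explicit (identifying $2^{g-1}\equiv (p+1)/2$ and citing Lemma~\ref{lemma_non-essential}), but the substance is the same.
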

\begin{proof}
If the multiplicative order of $2$ is $g$, 
then $g$ is the smallest integer 
so that $2^{g}\equiv 1 \bmod p$.
Thus, $2^{a}\equiv 2^{b}\equiv 2^{g-1} > 1 \bmod p$, and 
$n=2^{a}+2^{b}\equiv 2^{g-1}+2^{g-1}\equiv 2^{g}\equiv 1\bmod p$.
Hence $2^{a}$ is non-essential in base $p$
because the last digit in its base $p$ expansion is larger than that of $n$.

The element $3$ is non-essential for $n$ in base $2$ since our assumption 
of $n\geq 5$ implies that $b\geq 2$.  Because of our assumption that $p>3$, 
the element $3$ is also non-essential
in base $p$ since the last digit of the base~$p$ expansion
for $n$ is $1<3$, concluding the result.
\end{proof}

Note that we omitted $p=3$ from the previous result because this was already 
proven for $p=3$ in Theorem~\ref{theorem_even_order_two} due to the order
of $2$ being $g=2$, making ${g}/{2}=g-1$. 

\begin{remark}
\label{remark_Mersenne}
{\rm
Assuming $p>3$, if $p$ is a Mersenne prime,
that is, $p$ has the form $2^{q}-1$
implying that $q$ is also a prime number, the equivalence
condition on the exponents in Theorem~\ref{theorem_case_two}
is the only such condition modulo~$g$ for which
$\Phi_{2p}$ is a factor of~$Q_{n}(t)$.
The first examples of Mersenne primes after~$3$
are $p=7$, $31$ and~$127$.
}
\end{remark}

\begin{table}
\begin{center}
  \begin{tabular}{ | c | c | c | c | l | l |}
  \hline
$n$ & $s$ & $\{a,b\}\bmod g$ & $k$ & Chebikin et & Our \\ 
&&&& al.\ statement & statement \\ \hline
    \hline
    6 & 3 & $\{0,1\}$ & 5 & 
    Thm.~5.6 & Rem.~\ref{remark_one}   \\ \hline
    6 & 5 & $\{1,2\}$ & 3
    && Rem.~\ref{remark_two}  \\ \hline
    9 & 3 & $\{0,1\}$ & 7 &
    Thm.~5.5 & Thm.~\ref{theorem_even_order_one} \\ \hline
    9 & 9 && 2 &
    & Thm.~\ref{theorem_two_digits} \\ \hline
    10 & 3 & $\{1,1\}$ & 5
    && Thm.~\ref{theorem_even_order_two} \\ \hline
    10 & 5 & $\{1,3\}$ & 1 & 
    Thm.~5.6 & Rem.~\ref{remark_two}  \\ \hline
    10 & 9 && 5 & 
    &  Thm.~\ref{theorem_two_digits} \\ \hline
    10 & 15 && 3 & 
    &  Thm.~\ref{theorem_two_digits} \\ \hline
    12 & 3 & $\{0,1\}$ & 7 
    && Thm.~\ref{theorem_even_order_one} \\ \hline
    12 & 5 & $\{2,3\}$ & 3 
    && Rem.~\ref{remark_two} \\ \hline
    12 & 11 & $\{2,3\}$ & 2 
    && Rem.~\ref{remark_two}  \\ \hline
    12 & 55 && 3 
    && Thm.~\ref{theorem_two_digits} \\ \hline
    12 & 9, 33, 99 && 5 
    && Thm.~\ref{theorem_two_digits} \\ \hline
    17 & 17 & $\{0,4\}$ & 7 & 
    Thm.~5.5 & Thm.~\ref{theorem_even_order_one} \\ \hline
    18 & 17 & $\{1,4\}$ & 3
    && Rem.~\ref{remark_two} \\ \hline
    18 & 9, 51, 153 && 4 
    && Rem.~\ref{remark_18} \\ \hline
    20 & 3 & $\{2,4\} \bmod 6$ & 3 
    && Rem.~\ref{remark_three}  \\ \hline
    20 & 5 & $\{0,2\}$ & 7 
    && Thm.~\ref{theorem_even_order_one} \\ \hline
    20 & 17 & $\{2,4\}$ & 5 
    && Rem.~\ref{remark_two} \\ \hline
    & 15, 19, 51, 57, 85, &&
    && \\
    20 & 95, 255, 285, 323, && 6
    && Rem.~\ref{remark_20} \\
    & 969, 1615, 4845 &&
    && \\ \hline \hline
    72 & 3 & $\{0,1\}$ & 7 
    && Thm.~\ref{theorem_even_order_one} \\ \hline
    528 & 31 & $\{4,4\}$ & 3 
    && Thm.~\ref{theorem_case_two} \\ \hline
    1088 & 5 & $\{2,2\}$ & 9 
    && Thm.~\ref{theorem_even_order_two} \\ \hline
\end{tabular}
\end{center}
\caption{Examples of cyclotomic factors of 
$Q_{n}(t)$ of the form $\Phi_{2s}$,
where the binary expansion of $n$ has two $1$'s.}
\label{table_factors_2}
\end{table}

Table~\ref{table_factors_2} summarizes particular
values of $n$ and $s$ for which $\Phi_{2s}$ is a factor of 
$Q_{n}(t)$ with $n$ having two binary digits. 
An element $k$ that is non-essential in base~$2$ and base~$p$
and the statement explaining why it is a factor are also included.  
For the cases in which $s$ is a prime $p$, the set of exponents modulo the
multiplicative order $g$ of $2$ is also listed. 
The top portion includes factors that were known by Chebikin
et al., although many were left unexplained in their work.
Cases that were proven by Chebikin et al.\ are included
within the statement column. 
The bottom portion displays just a few examples of factors 
that are explained by our results that were previously unknown.  

\section{Three binary digits}
\label{section_three_digits}

We now continue to explore cyclotomic factors $\Phi_{2s}$,
where $s$ is an odd positive integer, in the descent set polynomial
$Q_{n}(t)$ where $n$ has three 1's in its binary expansion.

\begin{theorem}
Let $n=2^{c}+2^{b}+2^{a}$ where $c>b>a$
and $s$ is an odd positive integer.
Assume that 
$s$ divides
the three binomial coefficients
$\binom{n}{2^{a}}$,
$\binom{n}{2^{b}}$ and
$\binom{n}{2^{c}}$.
Assume furthermore that there is an element $k$
which is non-essential in base $2$,
that is,
$k \not\in \{2^{a}, 2^{b}, 2^{a}+2^{b}, 2^{c}, 2^{c}+2^{a}, 2^{c}+2^{b}\}$,
such that
$s$ divides~$\binom{n}{k}$.
Then the cyclotomic polynomial $\Phi_{2s}$
is a factor of the descent set polynomial $Q_{n}(t)$.
\label{theorem_three_digits}
\end{theorem}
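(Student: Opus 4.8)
The plan is to verify the two conditions \eqref{equation_vertical} and \eqref{equation_horizontal} of Lemma~\ref{factor_lemma} with $m = 2s$, that is, $a_{j} = a_{-j}$ and $a_{j} = a_{s-j}$ for the quantities $a_{j} = a_{2s,j}$, and then invoke that lemma to conclude that $\Phi_{2s}$ divides $Q_{n}(t)$. Equation~\eqref{equation_vertical} is the easy half and goes exactly as in the two-digit case: the element $k$ is non-essential for $n$ in base $2$, so by Lemma~\ref{lemma_binom} the coefficient $\binom{n}{k}$ is even; since $s$ is odd and $s \mid \binom{n}{k}$, we get $2s \mid \binom{n}{k}$, and Lemma~\ref{lemma_flip_sign} yields $a_{j} = a_{-j}$ for all $j$.

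The real content is equation~\eqref{equation_horizontal}. The new difficulty compared with Theorem~\ref{theorem_two_digits} is that splitting off a single power $2^{i}$ from $n$ leaves the block $n - 2^{i}$, which has two binary digits, so the factor $\beta_{n - 2^{i}}(\cdot)$ appearing in MacMahon's multiplication theorem is no longer automatically odd. First I would record that $2^{a}$, $2^{b}$ and $2^{c}$ are each essential for $n$ in base $2$, so by Kummer's theorem the three coefficients $\binom{n}{2^{a}}$, $\binom{n}{2^{b}}$ and $\binom{n}{2^{c}}$ are odd, and, together with the divisibility hypotheses, each is congruent to $s$ modulo $2s$. Applying MacMahon's theorem to the flip at $2^{a}$,
$$
\beta_{n}(S) + \beta_{n}(S \triangle \{2^{a}\})
=
\binom{n}{2^{a}}
\cdot
\beta_{2^{a}}(S \cap [2^{a}-1])
\cdot
\beta_{2^{b}+2^{c}}(S \cap [2^{a}+1,n-1]-2^{a}),
$$
and since $\binom{n}{2^{a}} \equiv s \bmod 2s$ and $\beta_{2^{a}}(\cdot)$ is odd by Theorem~\ref{theorem_binary_exp}(a), the right-hand side is $\equiv s \bmod 2s$ when $\beta_{2^{b}+2^{c}}(S \cap [2^{a}+1,n-1]-2^{a})$ is odd, and $\equiv 0 \bmod 2s$ otherwise. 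By Theorem~\ref{theorem_Euler}, and because $\Delta_{2^{b}+2^{c}}$ consists of the two isolated vertices $\{2^{b}\}$ and $\{2^{c}\}$, that factor is odd precisely when the shifted set contains both or neither of $2^{b}$ and $2^{c}$, equivalently when $S$ contains both or neither of $2^{a}+2^{b}$ and $2^{a}+2^{c}$. The analogous relations for the flips at $2^{b}$ and at $2^{c}$ read: $\beta_{n}(S) + \beta_{n}(S \triangle \{2^{b}\}) \equiv s \bmod 2s$ exactly when $S$ contains both or neither of $2^{a}+2^{b}$ and $2^{b}+2^{c}$, and $\beta_{n}(S) + \beta_{n}(S \triangle \{2^{c}\}) \equiv s \bmod 2s$ exactly when $S$ contains both or neither of $2^{a}+2^{c}$ and $2^{b}+2^{c}$.

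Next I would assemble an involution $\sigma$ on the subsets of $[n-1]$ satisfying $\beta_{n}(S) + \beta_{n}(\sigma(S)) \equiv s \bmod 2s$ for every $S$. Let $x$, $y$, $z \in \{0,1\}$ record whether $2^{a}+2^{b}$, $2^{a}+2^{c}$, $2^{b}+2^{c}$, respectively, belong to $S$; by the pigeonhole principle at least two of these three bits coincide. Define $\sigma(S) = S \triangle \{2^{a}\}$ if $x = y$; otherwise $\sigma(S) = S \triangle \{2^{b}\}$ if $x = z$; and otherwise $\sigma(S) = S \triangle \{2^{c}\}$, in which case necessarily $y = z$. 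Since none of $2^{a}$, $2^{b}$, $2^{c}$ equals any of $2^{a}+2^{b}$, $2^{a}+2^{c}$, $2^{b}+2^{c}$, the selected flip leaves $x$, $y$, $z$ unchanged, so $S$ and $\sigma(S)$ fall in the same case, $\sigma$ is well defined, and $\sigma \circ \sigma$ is the identity. By construction $\sigma$ always uses a flip for which the congruence above evaluates to $s$, so $\sigma$ is a bijection between the subsets counted by $a_{j}$ and those counted by $a_{s-j}$, giving equation~\eqref{equation_horizontal}. With both equations in hand, Lemma~\ref{factor_lemma} finishes the proof that $\Phi_{2s} \mid Q_{n}(t)$.

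I expect the central obstacle to be the bookkeeping in the middle paragraph: correctly identifying the parity of the two-digit factor $\beta_{2^{i}+2^{j}}(\cdot)$ as a membership condition on a specific pair of the doubleton elements $2^{i}+2^{j}$, and then checking that the pigeonhole recipe really defines a single well-defined involution, i.e., that the selected flip does not move $S$ into a different case. The remaining ingredients---Kummer's theorem, MacMahon's multiplication theorem, and Lemmas~\ref{lemma_flip_sign} and~\ref{factor_lemma}---enter in a routine way.
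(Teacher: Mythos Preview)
Your proof is correct and takes a genuinely different, cleaner route than the paper's.

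The paper establishes $a_{j}=a_{s-j}$ by flipping at the two \emph{complementary} essential elements $2^{a}+2^{b}$ and $2^{c}$; the controlling data are the cardinalities $|S\cap\{2^{a},2^{b}\}|$ and $|S\cap\{2^{c}+2^{a},2^{c}+2^{b}\}|$. When both equal $1$, neither flip gives the desired congruence, and the paper must analyze the hexagonal complex $\Delta_{n}$ through four subcases to show that $\beta_{n}(S)$ and $\beta_{n}(S\triangle\{2^{a},2^{b}\})$ have opposite parity, and then bring $k$ into the involution via $\phi(S)=S\triangle\{2^{a},2^{b},k\}$.

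You instead flip at the three single powers $2^{a},2^{b},2^{c}$ and record the three bits $x,y,z$ indicating membership of the pair-sums $2^{a}+2^{b},2^{a}+2^{c},2^{b}+2^{c}$. The pigeonhole observation that two of $x,y,z$ must agree guarantees that at least one of the three flips always yields the $\equiv s \bmod 2s$ congruence, so no residual ``hard case'' remains and the simplicial complex $\Delta_{n}$ never enters; only the trivial two-point complexes $\Delta_{2^{i}+2^{j}}$ are needed. Because the single powers are disjoint from the pair-sums, the priority rule is stable under the chosen flip, so your $\sigma$ is a genuine involution. The element $k$ is used solely for $a_{j}=a_{-j}$, exactly as in the two-digit theorem.

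Both arguments consume the same hypotheses (divisibility of all three $\binom{n}{2^{i}}$ by $s$, and the non-essential $k$ with $s\mid\binom{n}{k}$), so neither is more general; yours is simply more symmetric and avoids the four-subcase Euler-characteristic computation that occupies most of the paper's proof.
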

\begin{proof}
Since there is an element $k$
which is non-essential in base $2$, we know that
$2$ divides $\binom{n}{k}$.
Thus $2  s$ divides $\binom{n}{k}$,
and  Lemma~\ref{lemma_flip_sign} gives that $a_{j} = a_{-j}$ for all $j$.
Next our major goal is to show that
$a_{j} = a_{s-j}$.
We do that by constructing an involution
$\phi$ on all subsets of $[n-1]$ such that
$\beta_{n}(S) + \beta_{n}(\phi(S)) \equiv s \bmod 2s$.
Hence for every contribution to $a_{j}$
there is a corresponding contribution
to $a_{s-j}$.
The form of the involution $\phi$ will be
$\phi(S) = S \triangle X$ where the
subset $X$ depends on how $S$ intersects the
four element set
$\{2^{a}, 2^{b}, 2^{c}+2^{a}, 2^{c}+2^{b}\}$.

Since the elements $2^{c}$ and $2^{b}+2^{a}$ are both essential in base $2$, we apply 
MacMahon's theorem to get
\begin{align*}
\beta_{n}(S)+\beta_{n}(S \triangle \{2^{b}+2^{a}\})
&=
\binom{n}{2^{b}+2^{a}}
\cdot
\beta_{2^{b}+2^{a}}(S\cap[1,2^{b}+2^{a}-1]) \notag \\
&
\hspace*{23 mm}
\cdot
\beta_{2^{c}}(S\cap[2^{b}+2^{a}+1, n-1]-(2^{b}+2^{a}))
\notag \\
&\equiv \begin{cases}
       0 & \text{if } |S\cap\{2^{a},2^{b}\}|=1, \\
       1 & \text{if } |S\cap\{2^{a},2^{b}\}|=0 \text{ or } 2
           \end{cases}
\:\:\:\:
          \bmod 2 ,\\
\intertext{}
\beta_{n}(S)+\beta_{n}(S \triangle \{2^{c}\})
&=
\binom{n}{2^{c}}
\cdot
\beta_{2^{c}}(S\cap[2^{c}-1])
\cdot
\beta_{2^{b}+2^{a}}(S\cap[2^{c}+1,n-1]-2^{c})
\notag \\
&\equiv
\begin{cases}
       0 & \text{if } |S\cap\{2^{c}+2^{a},2^{c}+2^{b}\}|=1, \\
       1 & \text{if } |S\cap\{2^{c}+2^{a},2^{c}+2^{b}\}|=0 \text{ or } 2
\end{cases}
\:\:\:\:
          \bmod 2 ,
\end{align*}
since the two binomial coefficients
$\binom{n}{2^{b}+2^{a}} = \binom{n}{2^{c}}$ are both odd
and the descent set statistics involving
$\beta_{2^{c}}$
are also odd by
Theorem~\ref{theorem_binary_exp} (a).
Therefore, the sums of these descent
set statistics are determined by the values for $\beta_{2^{b}+2^{a}}$,
which we examine 
by considering the complex $\Delta_{2^{b}+2^{a}}$ and using
Theorem~\ref{theorem_Euler}.  
This complex consists of only of 
two isolated vertices at $2^{b}$ and $2^{a}$.  Thus, the induced subcomplex 
$\Delta_n|_{S\cap [1,2^{b}+2^{a}-1]}$ is a single vertex if $|S\cap\{2^{a},2^{b}\}|=1$ with a
reduced Euler characteristic of 0. Otherwise, it is two isolated vertices or the empty complex, 
both of which have a reduced Euler characteristic of $1 \bmod 2$. The reasoning behind the
second sum is identical once the set $S$ is shifted down by $2^{c}$.

Since $s$ divides
$\binom{n}{2^{c}} = \binom{n}{2^{a}+2^{b}}$,
we have by Lemma~\ref{lemma_flip} that
$\beta_{n}(S) + \beta_{n}(S\triangle\{2^{b}+2^{a}\})
     \equiv
  \beta_{n}(S) + \beta_{n}(S\triangle \{2^{c}\}) \equiv 0 \bmod s$.
Combining this with the modulo $2$ sums,
we have the following results modulo~$2s$
\begin{align}
\beta_{n}(S)+\beta_{n}(S \triangle \{2^{b}+2^{a}\})
&\equiv \begin{cases}
       0 & \text{if } |S\cap\{2^{a},2^{b}\}|=1, \\
       s & \text{if } |S\cap\{2^{a},2^{b}\}|=0 \text{ or } 2
           \end{cases}
\:\:\:\:
          \bmod 2s ,
\label{equation_a_new} \\      
\beta_{n}(S)+\beta_{n}(S \triangle \{2^{c}\})
&\equiv
\begin{cases}
       0 & \text{if } |S\cap\{2^{c}+2^{a},2^{c}+2^{b}\}|=1, \\
       s & \text{if } |S\cap\{2^{c}+2^{a},2^{c}+2^{b}\}|=0 \text{ or } 2
\end{cases}
\:\:\:\:
          \bmod 2s.
\label{equation_b_new}          
\end{align}

We now begin to construct the involution $\phi$.
Assume that
$|S \cap \{2^{a},2^{b}\}| = 0$ or $2$.
Then by equation~\eqref{equation_a_new}, 
$\beta_{n}(S) + \beta_{n}(S \triangle \{2^{b}+2^{a}\})
     \equiv
s \bmod 2s$.
Hence in this case let the involution be given by
$\phi(S) = S \triangle \{2^{b}+2^{a}\}$.

The symmetric case is as follows.
Assume that we have $|S \cap \{2^{a},2^{b}\}| = 1$
and $|S\cap \{2^{c}+2^{a},2^{c}+2^{b}\}| = 0$ or~$2$.
By equation~\eqref{equation_b_new}, 
$\beta_{n}(S) + \beta_{n}(S \triangle \{2^{c}\})
     \equiv
s \bmod 2s$
and let the involution be given by
$\phi(S) = S \triangle \{2^{c}\}$.

The case that remains is when
the set $S$ satisfies
$|S \cap \{2^{a},2^{b}\}| = 1$
and 
$|S \cap \{2^{c}+2^{a},2^{c}+2^{b}\}| = 1$.
By equations~\eqref{equation_a_new}          
and~\eqref{equation_b_new}
we have that      
$$   \beta_{n}(S) \equiv
       \beta_{n}(S \triangle \{2^{b}+2^{a},2^{c}\}) \equiv
       - \beta_{n}(S \triangle \{2^{b}+2^{a}\}) \equiv
       - \beta_{n}(S \triangle \{2^{c}\})
           \bmod 2s   .   $$
Especially,
these four descent set statistics all have the same parity. 
In order to determine this parity,
we need to consider the complex~$\Delta_{n}$,
displayed in Figure~\ref{figure_one}, and then apply
Theorem~\ref{theorem_Euler}.

\begin{figure}[t]
\setlength{\unitlength}{1.5mm}
\begin{center}
\begin{picture}(20,18)(0,0)
\put(5,0){\circle*{1}}
\put(15,0){\circle*{1}}
\put(0,8.66){\circle*{1}}
\put(20,8.66){\circle*{1}}
\put(5,17.32){\circle*{1}}
\put(15,17.32){\circle*{1}}
\qbezier(5,0)(5,0)(15,0)
\qbezier(15,0)(15,0)(20,8.66)
\qbezier(20,8.66)(20,8.66)(15,17.32)
\qbezier(15,17.32)(15,17.32)(5,17.32)
\qbezier(5,17.32)(5,17.32)(0,8.66)
\qbezier(0,8.66)(0,8.66)(5,0)
\put(1,18.5){$2^{b}+2^{a}$}
\put(16,18.5){$2^{a}$}
\put(-3,8){$2^{b}$}
\put(1,-3){$2^{c}+2^{b}$}
\put(15,-3){$2^{c}$}
\put(21,8){$2^{c}+2^{a}$}
\end{picture}
\end{center}
\caption{The complex $\Delta_{n}$ for $n = 2^{c} + 2^{b} + 2^{a}$.
Note that the essential elements are $2^{a},2^{b},2^{b}+2^{a},2^{c},2^{c}+2^{a},2^{c}+2^{b}$
in base $2$, corresponding to the vertices.}
\label{figure_one}
\end{figure}
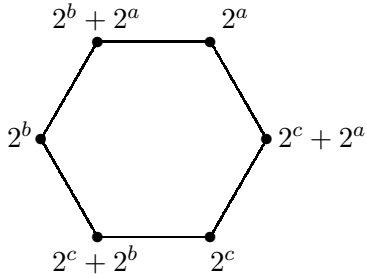

We now have four subcases to consider.
\begin{itemize}
\item[--]
First consider sets~$S$ such that
$S\cap \{2^{a},2^{b},2^{c}+2^{a},2^{c}+2^{b}\}=\{2^{a},2^{c}+2^{a}\}$.
Note that the four induced subcomplexes
$\Delta_{n}\vrule_{S}$,
$\Delta_{n}\vrule_{S \triangle \{2^{b}+2^{a}\}}$,
$\Delta_{n}\vrule_{S \triangle \{2^{c}\}}$ and
$\Delta_{n}\vrule_{S \triangle \{2^{b}+2^{a},2^{c}\}}$
are all contractible and hence have reduced Euler characteristic $0$.
Hence in this case
$\beta_{n}(S)$,
$\beta_{n}(S \triangle \{2^{b}+2^{a}\})$,
$\beta_{n}(S \triangle \{2^{c}\})$ and
$\beta_{n}(S \triangle \{2^{b}+2^{a},2^{c}\})$
are all even.

\item[--]
Second, when $S\cap \{2^{a},2^{b},2^{c}+2^{a},2^{c}+2^{b}\}=\{2^{b},2^{c}+2^{b}\}$,
by considering the reverse sets of the previous case,
the four sets
$S$,
$S \triangle \{2^{b}+2^{a}\}$,
$S \triangle \{2^{c}\}$ and
$S \triangle \{2^{b}+2^{a},2^{c}\}$
have even descent set statistics because their corresponding induced subcomplexes are contractible.

\item[--]
Third, consider sets~$S$ such that
$S\cap \{2^{a},2^{b},2^{c}+2^{a},2^{c}+2^{b}\}=\{2^{a},2^{c}+2^{b}\}$.
Now the four induced subcomplexes
$\Delta_{n}\vrule_{S}$,
$\Delta_{n}\vrule_{S \triangle \{2^{b}+2^{a}\}}$,
$\Delta_{n}\vrule_{S \triangle \{2^{c}\}}$ and
$\Delta_{n}\vrule_{S \triangle \{2^{b}+2^{a},2^{c}\}}$
are all homotopy equivalent to two points
and hence have reduced Euler characteristic $1$.
Hence in this case
the descent set statistics of the four sets
$S$,
$S \triangle \{2^{b}+2^{a}\}$,
$S \triangle \{2^{c}\}$ and
$S \triangle \{2^{b}+2^{a},2^{c}\}$
are all odd.

\item[--]
The fourth and last case is when
$S\cap \{2^{a},2^{b},2^{c}+2^{a},2^{c}+2^{b}\}=\{2^{b},2^{c}+2^{a}\}$.
Again, the four induced subcomplexes
$\Delta_{n}\vrule_{S}$,
$\Delta_{n}\vrule_{S \triangle \{2^{b}+2^{a}\}}$,
$\Delta_{n}\vrule_{S \triangle \{2^{c}\}}$ and
$\Delta_{n}\vrule_{S \triangle \{2^{b}+2^{a},2^{c}\}}$
are all homotopy equivalent to two points
and hence have reduced Euler characteristic $1$.
Therefore,
the descent set statistics of the four sets
$S$,
$S \cup \{2^{b}+2^{a}\}$,
$S \cup \{2^{c}\}$ and
$S \cup \{2^{b}+2^{a},2^{c}\}$
are all odd.
\end{itemize}
From these four subcases above we know that
$\beta_{n}(S) \equiv 1 + \beta_{n}(S \triangle \{2^{a},2^{b}\}) \bmod 2$.
Next, since $2^{a}$ and~$2^{b}$
both satisfy
$\binom{n}{2^{a}} \equiv \binom{n}{2^{b}} \equiv 0 \bmod s$,
we have that
$\beta_{n}(S) \equiv 
-\beta_{n}(S \triangle \{2^{a}\}) \equiv 
\beta_{n}(S \triangle \{2^{a},2^{b}\})  \bmod s$.
Combining these two statements and using that
$2s$ divides $\binom{n}{k}$ we conclude that
$$  \beta_{n}(S)
  \equiv s + \beta_{n}(S \triangle \{2^{a},2^{b}\}) 
  \equiv s - \beta_{n}(S \triangle \{2^{a},2^{b},k\})   \bmod 2s  . $$
Thus, the third and final case of the definition of $\phi$
is $\phi(S) = S \triangle \{2^{a},2^{b},k\}$.
This proves that the equality $a_{j} = a_{s-j}$.
With the proper equalities holding,
Lemma~\ref{factor_lemma} proves the theorem.
\end{proof}

One might ask if it is possible for $\Phi_{2s}$ to be a factor of $Q_{n}(t)$
if the binary expansion of $n$
has more than three binary digits.
Although the equations within Lemma~\ref{factor_lemma} are only sufficient
conditions and not necessary conditions for this cyclotomic polynomial
to be a factor, it is easy to see why
this lemma cannot be used unless there are equal numbers
of even and odd descent set statistics, as this
is implied by the combination of
equations~\eqref{equation_vertical}
and~\eqref{equation_horizontal}.
Chebikin et al.\ showed that there are not equal numbers
when the binary expansion of $n$ has 4 or 5 digits, although
it is not known if there is a $k>3$ for which this condition
is true when $n$ has $k$ binary digits.

Similar to Remark~\ref{remark_18} and~\ref{remark_20}
we have the next remark about $21$ and $22$.
\begin{remark}
{\rm
Consider $n = 21=2^{4} + 2^{2} + 1$ and $k=2$.
Observe that
$\gcd\left(\binom{21}{16}, \binom{21}{4}, \binom{21}{1}\right) = 21$.
Furthermore, observe that $\binom{21}{2}$ is a multiple of $21$.
Hence we obtain for each divisor $s$ of $21$ that
the cyclotomic polynomial $\Phi_{2s}$ divides $Q_{21}(t)$.
Similarly, for $n=22=2^{4} + 2^{2} + 2^{1}$ and $k=3$,
we have that
$\gcd\left(\binom{22}{16}, \binom{22}{4}, \binom{22}{2}\right) = 77$
divides $\binom{22}{3}$. Hence 
for each divisor $s$ of $77$ we conclude that
$\Phi_{2s}$ divides~$Q_{22}(t)$.}
\label{remark_21_22}
\end{remark}

We continue to consider the case when the integer $s$
is an odd prime $p$.
The following theorems give conditions for $p$
and the exponents $a$, $b$ and $c$ that provide the 
assumptions made for applying Theorem~\ref{theorem_three_digits}.

\begin{theorem}
\label{theorem_2_case_one}
Let $n=2^{c}+2^{b}+2^{a}$ where we assume $c>b>a$,
$n \geq 11$, and that the order~$g$ of~$2$ in
the multiplicative group $\Zzz_{p}^{*}$ is even.
If we have $\{a,b,c\} \equiv \{1,{g}/{2},{g}/{2}\} \bmod g$,
then~$2^{c}$, $2^{b}$ and $2^{a}$ are non-essential in base $p$.
Furthermore, the element $7$ is non-essential for
both base $2$ and base $p$.
Hence~$\Phi_{2p}$ is a factor of~$Q_{n}(t)$.
\end{theorem}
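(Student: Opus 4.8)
The plan is to follow the pattern of Theorem~\ref{theorem_even_order_one}, verifying the hypotheses needed to apply Theorem~\ref{theorem_three_digits} with $s = p$ and $k = 7$: that $2^{a}$, $2^{b}$ and $2^{c}$ are non-essential for $n$ in base $p$, and that $7$ is non-essential for $n$ in base $2$ and in base $p$. By Lemma~\ref{lemma_binom} these statements are equivalent to $p$ dividing $\binom{n}{2^{a}}$, $\binom{n}{2^{b}}$, $\binom{n}{2^{c}}$ and $\binom{n}{7}$, which together with the non-essentiality of $7$ in base $2$ is exactly what Theorem~\ref{theorem_three_digits} requires.

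First I would argue, just as in the even-order results above, that since $g$ is even and $p$ is prime, from $(2^{g/2}-1)(2^{g/2}+1) = 2^{g}-1 \equiv 0 \bmod p$ and $2^{g/2} \not\equiv 1 \bmod p$ we get $2^{g/2} \equiv -1 \bmod p$. The hypothesis $\{a,b,c\} \equiv \{1,g/2,g/2\} \bmod g$ then says that the last base-$p$ digits of $2^{a}$, $2^{b}$, $2^{c}$ are, in some order, $2$, $p-1$, $p-1$; summing, $n \equiv 2 + (-1) + (-1) \equiv 0 \bmod p$, so the last base-$p$ digit of $n$ is $0$. As each of $2^{a}$, $2^{b}$, $2^{c}$ has a strictly positive last digit in base $p$, each of them is non-essential for $n$ in base $p$.

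Next I would deal with $k = 7$. In base $2$, the element $7 = 2^{2}+2^{1}+2^{0}$ has three ones, so for $7$ to be essential for $n$ the three lowest binary digits of $n$ would all have to equal $1$; since $n$ has exactly three ones in total, this would force $n = 7$, which is excluded by $n \geq 11$. Hence $7$ is non-essential for $n$ in base $2$ (in particular it avoids the six essential elements listed in Theorem~\ref{theorem_three_digits}). For base $p$, note that the multiplicative order of $2$ in $\Zzz_{7}^{*}$ is $3$, which is odd, so the assumption that $g$ is even forces $p \neq 7$; then $7 \not\equiv 0 \bmod p$, so the last base-$p$ digit of $7$ is positive while that of $n$ is $0$, and $7$ is non-essential for $n$ in base $p$ as well.

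With all four binomial divisibilities established via Lemma~\ref{lemma_binom} and $7$ shown non-essential in base $2$, Theorem~\ref{theorem_three_digits} applies and gives that $\Phi_{2p}$ divides $Q_{n}(t)$. The only place any care is needed is the two edge cases: the bound $n \geq 11$ is precisely what rules out $n = 7$, the unique three-digit binary integer for which $7$ is essential, and the evenness of $g$ is precisely what rules out $p = 7$; apart from these the digit comparisons are uniform in $p$, so there is no genuine obstacle once Theorem~\ref{theorem_three_digits} is in hand.
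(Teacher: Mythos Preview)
Your proof is correct and follows essentially the same approach as the paper: you verify $2^{g/2}\equiv -1\bmod p$, conclude $n\equiv 0\bmod p$ so that $2^{a},2^{b},2^{c}$ are non-essential in base~$p$, rule out $p=7$ via the odd order of $2$ in $\Zzz_{7}^{*}$, and observe that $7$ is non-essential in base~$2$ since $n\geq 11$ prevents $n=7$. The only cosmetic difference is that the paper phrases the base-$2$ step as ``$n\geq 11$ implies $c\geq 3$'' whereas you argue directly that $7$ essential would force $a=0$, $b=1$, $c=2$; the content is identical.
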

\begin{proof}
Using the same congruences as in the proof of
Theorem~\ref{theorem_even_order_one}, we have
\begin{align*}
n
= 2^{c}+2^{b}+2^{a}
\equiv 2^{g/2}+2^{g/2}+2^1 
\equiv (p-1)+(p-1)+2 
&\equiv 0 \bmod p, 
\end{align*}
hence the last digit in the base $p$ expansion of $n$ is $0$.
This makes $2^{c}$, $2^{b}$ and $2^{a}$ be non-essential in base
$p$ since the last digit for these powers of two are each greater
than 0.

The assumption that $n \geq 11$ implies that $c\geq 3$,
hence the number $7$ is non-essential
for $n$ in base~$2$.
Additionally, the last digit of the base $p$ expansion of $7$
is non-zero except when $p=7$,
but this case is not included for this theorem since the order of $2$
in $\Zzz_{7}^{*}$ is odd.
Therefore, $7$ is non-essential in base~$p$ as well,
which concludes the proof of the theorem.
\end{proof}

\begin{theorem}
\label{theorem_2_case_two}
Let $n=2^{c}+2^{b}+2^{a}$ where $c>b>a$, and assume that
$p$ is an odd prime greater than or equal to~$5$.
If $\{a,b,c\}\equiv \{g-2,g-2,g-1\} \bmod g$
where $g$ is the multiplicative order of~$2$ in~$\Zzz_{p}^{*}$,
then $2^{c}$, $2^{b}$ and $2^{a}$ are non-essential in base $p$.
Furthermore, the element $3$
is non-essential in both base $2$ and base $p$.
Hence~$\Phi_{2p}$ is a factor of~$Q_{n}(t)$.
\end{theorem}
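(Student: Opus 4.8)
The plan is to follow the pattern of Theorems~\ref{theorem_even_order_two}, \ref{theorem_case_two}, and~\ref{theorem_2_case_one}: I will check the digit-theoretic hypotheses that let Theorem~\ref{theorem_three_digits} be applied with $s=p$ and with the non-essential element $k=3$. Recall that, via Lemma~\ref{lemma_binom}, the divisibility conditions $p\mid\binom{n}{2^{a}},\binom{n}{2^{b}},\binom{n}{2^{c}}$ and $p\mid\binom{n}{3}$ demanded in Theorem~\ref{theorem_three_digits} are equivalent to $2^{a},2^{b},2^{c}$ being non-essential for $n$ in base $p$ and to $3$ being non-essential for $n$ in base $p$; and that the requirement $k=3\notin\{2^{a},2^{b},2^{a}+2^{b},2^{c},2^{c}+2^{a},2^{c}+2^{b}\}$ is exactly the statement that $3$ is non-essential for $n$ in base $2$. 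So the proof reduces to two digit chases.

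First I would determine the last base-$p$ digit of $n$. Since $g$ is the order of $2$ in $\Zzz_{p}^{*}$ we have $2^{g}\equiv 1\bmod p$, and since $p\geq 5$ we have $g\geq 3$ (the value $g=1$ is impossible and $g=2$ would force $p=3$). The multiset condition $\{a,b,c\}\equiv\{g-2,g-2,g-1\}\bmod g$ says that two of the exponents are $\equiv g-2$ and one is $\equiv g-1$; using $2\cdot 2^{g-2}=2^{g-1}$ and $2^{g-1}+2^{g-1}=2^{g}\equiv 1\bmod p$, this gives $n\equiv 2^{g-2}+2^{g-2}+2^{g-1}\equiv 1\bmod p$, so the last digit of $n$ in base $p$ is $1$.

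Next I would read off non-essentiality. Each of $2^{a},2^{b},2^{c}$ has last base-$p$ digit equal to $2^{g-2}\bmod p$ or $2^{g-1}\bmod p$; since $g\geq 3$, neither $g-2$ nor $g-1$ is a multiple of $g$, so both residues are $\geq 2$, hence strictly exceed the last digit $1$ of $n$. By the definition of an essential element this makes $2^{a},2^{b},2^{c}$ all non-essential in base $p$. For the element $3$: the congruence $\{a,b,c\}\equiv\{g-2,g-2,g-1\}\bmod g$ together with $g\geq 3$ forces $a\not\equiv 0\bmod g$, hence $a\geq 1$, so the last binary digit of $n$ is $0$ while that of $3$ is $1$; thus $3$ is non-essential in base $2$. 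And since $3<p$, the base-$p$ expansion of $3$ is the single digit $3>1$, which again exceeds the last digit of $n$, so $3$ is non-essential in base $p$ as well. With all of these conditions in hand, Theorem~\ref{theorem_three_digits} applies with $s=p$ and $k=3$, yielding that $\Phi_{2p}$ divides $Q_{n}(t)$.

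There is no genuine obstacle; the point to watch is the hypothesis $p\geq 5$, which is used in two places. It forces $g\geq 3$, and that is exactly what makes the last base-$p$ digits $2^{g-2}\bmod p$ and $2^{g-1}\bmod p$ of the three powers strictly larger than $1$, and also what gives $a\geq 1$ so that $3$ is genuinely non-essential in base $2$. For $p=3$ one has $g=2$ and $2^{g-2}\bmod 3=1$ coincides with the last digit of $n$, so this method would not deliver non-essentiality of the powers; that is why the theorem is stated only for $p\geq 5$.
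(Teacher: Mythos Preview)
Your proof is correct and follows essentially the same approach as the paper: compute $n\equiv 2^{g-2}+2^{g-2}+2^{g-1}\equiv 2^{g}\equiv 1\bmod p$, use $p\geq 5\Rightarrow g\geq 3$ to see that the last base-$p$ digits of $2^{a},2^{b},2^{c}$ exceed $1$, and then verify that $3$ is non-essential in both bases before invoking Theorem~\ref{theorem_three_digits}. If anything, your justification that the residues $2^{g-2}\bmod p$ and $2^{g-1}\bmod p$ are at least $2$ (because neither exponent is a multiple of $g$) is slightly cleaner than the paper's.
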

\begin{proof}
We have 
\begin{align*}
n
= 2^{c}+2^{b}+2^{a}
\equiv 2^{g-2}+2^{g-2}+2^{g-1}
&\equiv 2^{g} \equiv 1 \bmod p,
\end{align*}
hence the last digit of the base $p$ expansion of $n$ is 1.  
Since we assume $p \geq 5$, we must have $g \geq 3$, hence
$2^{g-1}>2^{g-2}>1 \bmod p$. Thus, $2^{c}$, $2^{b}$ and $2^{a}$ are non-essential for~$n$ in base $p$ since the last digit of their base $p$ expansions 
larger than 1.

Also since we assume $p \geq 5$,
the last digit of the base $p$ expansion of $3$  is greater than $1$
as well, making it non-essential in base $p$.  The element $3$ is also non-essential in base $2$ since the fact that $g \geq 3$ implies that $2^{a}\neq 1$.
\end{proof}

\begin{remark}
\label{remark_four}
{\rm
With $n$ having two binary digits, there were many equivalencies modulo $m$ on the exponents $a$ and $b$
beyond what could be shown in results that held for all $p$ or for all $p$ with $m$ being even.  Likewise,
many such equivalencies exist in the three binary digit case that cause 
each of $2^{c}$, $2^{b}$ and $2^{a}$ to be non-essential in base $p$.
Examples of these equivalencies include the following:
\begin{itemize}
\item[--]
$\{a,b,c\}\equiv \{0,0,0\} \bmod 2$ when $p=3$ since the final digit of
$n$ in base $3$ is $1+1+1\equiv 0 \bmod 3$

\item[--]
$\{a,b,c\} \equiv \{1,2,4\} \bmod 10$
when $p=11$ because the last digit of $n$ is $2+4+16\equiv 0 \bmod 11$

\item[--]
$\{a,b,c\}\equiv \{1,2,3\}\bmod 12$ 
when $p=13$ since the last digit of $n$ in base $13$ is
$2+4+8 \equiv 1 \bmod 13$.  
\end{itemize}
Of course, to show that $Q_{n}(t)$
has $\Phi_{2p}$ as a factor, one still needs to find
an element $k$ that is non-essential in base $2$ and $p$, which
is shown for these examples for
a particular $n$ value in Table~\ref{table_factors_3}.
}
\end{remark}

\begin{remark}
\label{remark_five}
{\rm
As with Remark~\ref{remark_three}, we can also find equivalencies
modulo $G$ for the exponents 
$a,$ $b$ and $c$ when $n$ has three binary digits.
As an example, when $p=3$ there are equivalencies 
such as  $\{a,b,c\}\equiv \{3,4,5\} \bmod 6$ that cause $2^{c}$, $2^{b}$ 
and $2^{a}$ to be non-essential in base $p$.
This one exists because the last two digits of $n$ are $22+21+12\equiv 02$,
whereas the second to last digit of $2^{c}$, $2^{b}$ 
and~$2^{a}$ is each greater than 0.
}
\end{remark}

\begin{theorem}
\label{theorem_2_case_three}
Let $n=2^{c}+2^{b}+2^{a}$ where $c>b>a$ and $n>7$, and assume that
$p=2^e+2^d+1$ where $e>d$.  If $\{a,b,c\}\equiv \{0,d,e\} \bmod g$ where
$g$ is the multiplicative order of $2$ in $\Zzz_{p}^{*}$, then 
$2^{c}$, $2^{b}$ and $2^{a}$ are non-essential
in base $p$.  Furthermore, at least one of the elements $7$ or $13$ is non-essential in both base~$2$ and base~$p$.
Hence~$\Phi_{2p}$ is a factor of~$Q_{n}(t)$.
\end{theorem}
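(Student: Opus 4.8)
The plan is to verify the two assertions in the theorem and then invoke Theorem~\ref{theorem_three_digits} with $s=p$. For the first assertion, I would argue exactly as in the openings of Theorems~\ref{theorem_2_case_one} and~\ref{theorem_2_case_two}. Since $p=2^{e}+2^{d}+1$ is an odd prime, we must have $d\geq 1$ (otherwise $p$ would be even), so $1<2^{d}<2^{e}<p$ and each of $1$, $2^{d}$, $2^{e}$ is its own least significant digit in base $p$. Because $2$ has order $g$ in $\Zzz_{p}^{*}$ and $\{a,b,c\}\equiv\{0,d,e\}\bmod g$, the residues of $2^{a}$, $2^{b}$, $2^{c}$ modulo $p$ are $1$, $2^{d}$, $2^{e}$ in some order, whence $n\equiv 1+2^{d}+2^{e}=p\equiv 0\bmod p$; thus the last base-$p$ digit of $n$ is $0$. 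Each of $2^{a}$, $2^{b}$, $2^{c}$ has a strictly positive last base-$p$ digit, which therefore exceeds that of $n$, so each is non-essential in base $p$, and by Lemma~\ref{lemma_binom} the prime $p$ divides each of $\binom{n}{2^{a}}$, $\binom{n}{2^{b}}$, $\binom{n}{2^{c}}$.

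Next I would produce the auxiliary element $k$, splitting on whether $p>7$ or $p=7$; note that $e>d\geq1$ forces $p\geq 7$, with equality only for $(e,d)=(2,1)$. If $p>7$, take $k=7$: since $n$ has three binary digits and $n>7$ gives $n\geq 11$, the value $7$ lies in $[n-1]$ and is not among the essential elements $2^{a}$, $2^{b}$, $2^{a}+2^{b}$, $2^{c}$, $2^{c}+2^{a}$, $2^{c}+2^{b}$ (it is neither a power of $2$ nor a sum of two distinct powers of $2$), so $7$ is non-essential in base $2$; its last base-$p$ digit is $7\neq 0$, so it is non-essential in base $p$ as well. If $p=7$, then $(e,d)=(2,1)$ and $2$ has order $g=3$ in $\Zzz_{7}^{*}$, so the hypothesis reads $\{a,b,c\}\equiv\{0,1,2\}\bmod 3$ and the computation above shows $7\mid n$; combined with $n>7$ this forces $n\geq 14$. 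Then take $k=13$: it lies in $[n-1]$, is non-essential in base $2$ by the same reasoning, and has last base-$7$ digit $13\bmod 7=6\neq 0$, so it is non-essential in base $7$. In either case Lemma~\ref{lemma_binom} gives $p\mid\binom{n}{k}$, so all hypotheses of Theorem~\ref{theorem_three_digits} hold with $s=p$ and $\Phi_{2p}$ divides $Q_{n}(t)$.

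The only genuinely delicate point is the prime $p=7$: there the natural candidate $7$ can fail to be non-essential in base $p$, since its last base-$7$ digit is $0$, exactly matching that of $n$. This is precisely why the statement offers $13$ as an alternative, and the observation that makes $13$ legitimate is that the congruence hypothesis forces $n$ to be a multiple of $7$, hence $n\geq 14$ and $13\in[n-1]$. Everything else is the routine digit bookkeeping already used in the earlier theorems of this section.
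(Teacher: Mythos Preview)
Your proposal is correct and follows essentially the same approach as the paper's proof: compute $n\equiv 0\bmod p$ from the congruence hypothesis to show $2^{a},2^{b},2^{c}$ are non-essential in base $p$, then split on $p\neq 7$ versus $p=7$ to choose $k=7$ or $k=13$ respectively. Your treatment of the $p=7$ case is slightly cleaner than the paper's, since you derive $n\geq 14$ directly from the already-established fact $p\mid n$ rather than enumerating possible exponents.
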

\begin{proof}
We have
$$ n = 2^{c}+2^{b}+2^{a} \equiv 2^{e}+2^{d}+1 \equiv p \equiv 0 \bmod p,$$
making the last digit in the base $p$ expansion of $n$ be $0$.  This causes $2^{c}$, $2^{b}$ and $2^{a}$ to be non-essential
in base $p$ since their last digits are $1$, $2^d$ or $2^e$, 
all of which are greater than $0 \bmod p$. 

First consider when $p\neq 7$.  In this case, the element $7$ is
non-essential in base $p$ because the last
digit in its base $p$ expansion is greater than $0$,
which is the last digit for $n$.  Since we assume $n>7$ with 
three binary digits, the element $7$ is also non-essential in base $2$
since $7$ also three digits in its 
binary expansion, completing the result in this case.

If we instead assume $p=7$, then the element $13$ is non-essential
in base $7$ since its base $7$ expansion
has a $6$ as its final digit.
The assumption of $n>7$,
the fact that $d=1$ and $e=2$, and that $\{a,b,c\}\equiv \{0,d,e\} \bmod 3$ 
where the $3$ is the order of $2$ in $\Zzz_{7}^{*}$, result in
the smallest such value for $n$ being $14$.
Since $n$ and $13$ each have
three binary digits with $n>13$,
we have that $13$ is also non-essential in base $2$,
concluding the proof of the theorem.
\end{proof}

The following proposition explains the occurrence of another cyclotomic
factor of the form $\Phi_{2p}$ which is an outlier compared to other such
factors.  When $n=11$ and $p=3$, observe from the base~$2$
and base $3$ 
expansions of $11 = 2^{3} + 2 + 1 = 3^{2} + 2 \cdot 1$ that
both $2$ and $1$ are essential
in base $p$.  Therefore, Theorem~\ref{theorem_three_digits}
is not applicable.
However, $\Phi_{6}$ is still a factor of the descent set polynomial
for $n=11$, as shown in Proposition~\ref{proposition_eleven},
but we first need the 
following lemma to obtain certain descent set statistics modulo 3.

\begin{lemma}
Let $R$ be a subset of the interval $[3,8]$.
Then we have the following four evaluations of
descent set statistics:
\begin{align*}
\beta_{11}(R \cup \{1,9\})
& \equiv
\beta_{11}(R \cup \{2,10\})
\equiv
(-1)^{|R|} \bmod 3, \\
\beta_{11}(R \cup \{1,10\})
& \equiv
\beta_{11}(R \cup \{2,9\})
\equiv
- (-1)^{|R|} \bmod 3. 
\end{align*}
Especially, all these values are non-zero modulo $3$.
\label{lemma_eleven}
\end{lemma}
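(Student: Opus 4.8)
The plan is to compute these four descent set statistics modulo $3$ by passing through MacMahon's multiplication theorem repeatedly, using the facts we have already established about the base-$3$ expansion of $11$. Since $11 = 3^2 + 2 \cdot 1$, Lemma~\ref{lemma_binom} tells us that the essential elements of $11$ in base $3$ are exactly $1$ and $2$, so all of $3,4,\ldots,9,10$ are non-essential in base~$3$; in particular $3$ divides $\binom{11}{k}$ for each such~$k$. By Lemma~\ref{lemma_flip}, flipping any single non-essential-in-base-$3$ element $k$ into or out of a set negates $\beta_{11}$ modulo~$3$. First I would use this to reduce the four sets $R \cup \{i,j\}$ with $i \in \{1,2\}$ and $j \in \{9,10\}$ down to a common ``core'' set: each element of $R \subseteq [3,8]$ is non-essential in base~$3$, as are $9$ and~$10$, so
\begin{align*}
\beta_{11}(R \cup \{i,j\})
\equiv (-1)^{|R|+1} \cdot \beta_{11}(\{i,j\}) \bmod 3
\end{align*}
for each of the four choices, where I have peeled off the $|R|$ elements of $R$ and the one element $j \in \{9,10\}$ one at a time (the element $j$ contributes the ``$+1$'' in the exponent). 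So everything reduces to evaluating the four numbers $\beta_{11}(\{1,9\})$, $\beta_{11}(\{1,10\})$, $\beta_{11}(\{2,9\})$, $\beta_{11}(\{2,10\})$ modulo~$3$.

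Next I would compute these four base cases. The cleanest route is the inclusion-exclusion formula~\eqref{equation_inclusion-exclusion} together with the multinomial description of the flag $f$-vector: for instance $\beta_{11}(\{1,9\}) = f_{\{1,9\}} - f_{\{1\}} - f_{\{9\}} + f_{\emptyset} = \binom{11}{1,8,2} - \binom{11}{1,10} - \binom{11}{9,2} + 1$, and similarly for the other three, and then reduce each multinomial coefficient modulo~$3$ using Lucas' theorem (equivalently Lemma~\ref{lemma_power_prime}) on the base-$3$ digits. Alternatively, one can invoke MacMahon's theorem with $k = 9$ (which is essential for $11$ in base~$3$, since $11 = 102_3$ and $9 = 100_3$ so $\binom{11}{9} \equiv 1 \bmod 3$) to write $\beta_{11}(\{1,9\}) + \beta_{11}(\{1\}) \equiv \binom{11}{9}\,\beta_9(\{1\})\,\beta_2(\emptyset) \equiv \beta_9(\{1\}) \bmod 3$; since $\beta_9$ is all odd we still need its value mod~$3$, which is another small MacMahon/Lucas computation on $9 = 100_3$. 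I expect this bookkeeping to confirm $\beta_{11}(\{1,9\}) \equiv \beta_{11}(\{2,10\}) \equiv -1 \bmod 3$ and $\beta_{11}(\{1,10\}) \equiv \beta_{11}(\{2,9\}) \equiv 1 \bmod 3$; combined with the reduction above and the sign $(-1)^{|R|+1}$, this yields exactly the claimed values. The symmetry $\beta_{11}(S) = \beta_{11}([10] \setminus (S-0))$-type reversal (Lemma~\ref{lemma_essential} style, i.e.\ complementation $S \mapsto \{11-s : s \in S\}$) pairs $\{1,9\}$ with $\{2,10\}$ and $\{1,10\}$ with $\{2,9\}$, cutting the work in half.

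The main obstacle is simply getting the signs right: there are two independent sources of sign flips (the parity of $|R|$, and the choice of which of $9$ or $10$ sits in the set), and it is easy to be off by an overall $-1$. I would nail this down by carefully tracking Lemma~\ref{lemma_flip}: each application of $S \mapsto S \triangle \{k\}$ with $3 \mid \binom{11}{k}$ contributes precisely one factor of $-1$, and I would count that $|R| + 1$ such flips separate $R \cup \{i,j\}$ from $\{i\}$ (namely: remove the $|R|$ elements of $R$, then remove $j$). A useful internal consistency check is that the final answer must be invariant under $R \mapsto R$ with the roles of the two $j$-values symmetric, and the stated conclusion is symmetric under swapping $9 \leftrightarrow 10$ while also swapping $1 \leftrightarrow 2$, which matches the complementation symmetry. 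Once the four base cases and the sign count are verified, the ``especially'' clause is immediate since $\pm 1 \not\equiv 0 \bmod 3$.
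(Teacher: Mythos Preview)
There is a genuine error in your reduction step. You assert that the essential elements of $11$ in base $3$ are exactly $1$ and $2$, but since $11 = 102_3$ the essential elements are $1$, $2$, $9$, and $10$: indeed $\binom{11}{9} = 55 \equiv 1$ and $\binom{11}{10} = 11 \equiv 2 \bmod 3$. Consequently Lemma~\ref{lemma_flip} does \emph{not} apply with $k = 9$ or $k = 10$, and you cannot ``peel off'' the element $j \in \{9,10\}$. The reduction only legitimately reaches
\[
\beta_{11}(R \cup \{i,j\}) \equiv (-1)^{|R|}\,\beta_{11}(\{i,j\}) \bmod 3,
\]
with sign $(-1)^{|R|}$ rather than $(-1)^{|R|+1}$. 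Your expected base-case values are correspondingly off by a sign: one computes $\beta_{11}(\{1,9\}) = 430 \equiv 1$, $\beta_{11}(\{2,10\}) = 430 \equiv 1$, $\beta_{11}(\{1,10\}) = 89 \equiv -1$, and $\beta_{11}(\{2,9\}) = 1871 \equiv -1 \bmod 3$. Your two sign errors cancel, so the final formulas come out right, but the argument as written is incorrect, and the internal ``consistency check'' paragraph (which counts $|R|+1$ flips down to $\{i\}$) rests on the same false premise.

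For comparison, the paper's proof avoids this sign-tracking altogether: it computes $F(B_{11}) \bmod 3$ via Lemma~\ref{lemma_power_prime}, reads off the eight nonzero values of $f_{S} \bmod 3$, and then applies inclusion-exclusion~\eqref{equation_inclusion-exclusion} directly to the full set $R \cup \{i,j\}$, where only two or three surviving terms remain. Once you correct the list of essential elements, your approach is really a repackaging of the same computation (``peeling off $R$'' is exactly the observation that $f_{T} \equiv 0$ whenever $T$ meets $[3,8]$), so either route works; the paper's version just makes the bookkeeping harder to get wrong.
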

\begin{proof}
We consider
the quasi-symmetric function of $B_{11}$ modulo $3$.
Using Lemma~\ref{lemma_power_prime}, we have
\begin{align*}
F(B_{11})
&\equiv M_{(9)}\cdot M_{(1)}^2 \\
&\equiv M_{(9)}\cdot (M_{(2)}+2M_{(1,1)}) \\
&\equiv M_{(11)}+M_{(9,2)}+M_{(2,9)}+2M_{(9,1,1)}+2M_{(10,1)}\\
& \hspace{.6cm} +2M_{(1,9,1)}+2M_{(1,10)}+2M_{(1,1,9)} \bmod 3 ,
\end{align*}
where the second and third step is expanding a product
of monomial quasi-symmetric functions in terms of
monomial quasi-symmetric functions;
see~\cite[Lemma~3.3]{Ehrenborg_Hopf}.
Reading of the coefficients
of the quasi-symmetric functions, we have
the following values for the flag $f$-vector:
$$f_{S} \equiv 
\begin{cases}
1 & \text{if }S= \emptyset,\text{ } \{9\},\text{ or }\{2\}, \\
2 & \text{if }S=\{9,10\},\text{ }\{10\},\text{ }\{1,10\},
\text{ }\{1\},\text{ or }\{1,2\}, \\
0 & \text{otherwise.}
\end{cases}
\:\:\:\: \:\:\:\: \bmod 3 . 
$$
Observe that only eight entries are non-zero modulo $3$.
Using inclusion-exclusion, the descent set statistic
is given by
\begin{align*}
\beta_{11}(R \cup \{1,9\})
& \equiv
\sum_{T \subseteq R \cup \{1,9\}}
(-1)^{|R \cup \{1,9\} - T|} \cdot f_{T} \\
& \equiv
(-1)^{|R \cup \{1,9\}|} \cdot f_{\emptyset}
+
(-1)^{|R \cup \{9\}|} \cdot f_{\{1\}}
+
(-1)^{|R \cup \{1\}|} \cdot f_{\{9\}} \\
& \equiv
(-1)^{|R|}  \bmod 3 .
\end{align*}
The three descent set statistics
$\beta_{11}(R \cup \{1,10\})$,
$\beta_{11}(R \cup \{2,9\})$ and
$\beta_{11}(R \cup \{2,10\})$
can be computed similarly.
\end{proof}

\begin{proposition}
The cyclotomic polynomial $\Phi_{6}$ is a factor of
the descent set polynomial $Q_{11}(t)$.
\label{proposition_eleven}
\end{proposition}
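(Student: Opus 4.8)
The plan is to verify the two equations of Lemma~\ref{factor_lemma} for $m=6$. Writing $a_{j}=a_{6,j}$ for the number of subsets $S\subseteq[10]$ with $\beta_{11}(S)\equiv j\bmod 6$, I must establish $a_{j}=a_{-j}$ and $a_{j}=a_{3-j}$ for all $j$, after which Lemma~\ref{factor_lemma} immediately gives $\Phi_{6}\mid Q_{11}(t)$. (Recall from the discussion above that Theorem~\ref{theorem_three_digits} does not apply here, since both $1$ and $2$ are essential for $11$ in base $3$, so a bespoke argument is needed.)

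The equation $a_{j}=a_{-j}$ is immediate. The element $k=4$ is non-essential for $11$ in base $2$, since $11=1011_{2}$ has a $0$ in the $4$'s place while $4=100_{2}$ does not, and it is non-essential in base $3$, since $11=102_{3}$ has a $0$ in the $3$'s place while $4=11_{3}$ does not. Hence both $2$ and $3$ divide $\binom{11}{4}=330$, so $6\mid\binom{11}{4}$, and Lemma~\ref{lemma_flip_sign} yields $a_{j}=a_{-j}$ for all $j$.

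For $a_{j}=a_{3-j}$ the idea is to construct an involution $\phi$ on the subsets of $[10]$ satisfying $\beta_{11}(S)+\beta_{11}(\phi(S))\equiv 3\bmod 6$; the map $S\mapsto\phi(S)$ then restricts to a bijection between the subsets counted by $a_{j}$ and those counted by $a_{3-j}$. The arithmetic I lean on is that $\binom{11}{3}=\binom{11}{8}=165\equiv 3\bmod 6$, that $\beta_{3}(T)=1$ whenever $|T|\in\{0,2\}$, and that $\beta_{2^{a}}$ takes only odd values by Theorem~\ref{theorem_binary_exp}(a). I split the subsets $S$ according to how $S$ meets $\{1,2\}$ and $\{9,10\}$. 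When $|S\cap\{1,2\}|\neq 1$, set $\phi(S)=S\triangle\{3\}$: MacMahon's multiplication theorem gives $\beta_{11}(S)+\beta_{11}(S\triangle\{3\})=\binom{11}{3}\cdot\beta_{3}(S\cap\{1,2\})\cdot\beta_{8}(S\cap[4,10]-3)$, which is $\equiv 165\cdot 1\cdot(\mathrm{odd})\equiv 3\bmod 6$. When $|S\cap\{1,2\}|=1$ but $|S\cap\{9,10\}|\neq 1$, set $\phi(S)=S\triangle\{8\}$ and run the mirror computation using $\binom{11}{8}$, the odd value $\beta_{8}(S\cap[7])$, and $\beta_{3}(S\cap[9,10]-8)=1$. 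The remaining subsets, those with $|S\cap\{1,2\}|=|S\cap\{9,10\}|=1$, are exactly the sets $R\cup\{i\}\cup\{j\}$ with $R\subseteq[3,8]$, $i\in\{1,2\}$, $j\in\{9,10\}$ — precisely those covered by Lemma~\ref{lemma_eleven}; here set $\phi(S)=S\triangle\{9,10\}$, which preserves this case and exchanges the roles of $9$ and $10$.

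The work is all in this last case, and it is the main obstacle. By Lemma~\ref{lemma_eleven}, $\beta_{11}(R\cup\{i,9\})$ and $\beta_{11}(R\cup\{i,10\})$ are negatives of one another modulo $3$, so $\beta_{11}(S)+\beta_{11}(\phi(S))\equiv 0\bmod 3$. For the parity I use Theorem~\ref{theorem_Euler} together with the description of $\Delta_{11}$ as the hexagon whose vertex cycle is $3,1,9,8,10,2$; in particular $\{9,10\}$ is not an edge of $\Delta_{11}$, whereas $\{1,9\},\{8,9\},\{8,10\},\{2,10\},\{1,3\},\{2,3\}$ all are. Checking the at most four relevant possibilities for $R\cap\{3,8\}$ in each of the four subcases $\{i,j\}$ shows that of the two induced subcomplexes $\Delta_{11}\vrule_{R\cup\{i,9\}}$ and $\Delta_{11}\vrule_{R\cup\{i,10\}}$, exactly one is contractible (reduced Euler characteristic $0$) and the other is homotopy equivalent to two points (reduced Euler characteristic $1$), so $\beta_{11}(S)+\beta_{11}(\phi(S))$ is odd. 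Combining the two congruences gives $\beta_{11}(S)+\beta_{11}(\phi(S))\equiv 3\bmod 6$. Finally one checks that the three cases partition the subsets of $[10]$, that $\phi$ sends each case to itself and squares to the identity, and that $\phi$ has no fixed point since $2\beta_{11}(S)\equiv 3\bmod 6$ is impossible; hence $\phi$ is a well-defined fixed-point-free involution and $a_{j}=a_{3-j}$ for all $j$. Lemma~\ref{factor_lemma} then completes the proof.
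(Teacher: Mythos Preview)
Your proof is correct, but it follows a genuinely different route from the paper's.

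The paper also establishes $a_{j}=a_{-j}$ via Lemma~\ref{lemma_flip_sign} (using $k=3$ rather than your $k=4$, but either works). The divergence is in how $a_{j}=a_{3-j}$ is obtained. The paper only proves $a_{0}=a_{3}$ directly: in the ``hard'' case $|S\cap\{1,2\}|=|S\cap\{9,10\}|=1$, Lemma~\ref{lemma_eleven} shows $\beta_{11}(S)\not\equiv 0\bmod 3$, so those sets simply do not contribute to $a_{0}$ or $a_{3}$ and may be ignored. The remaining equalities $a_{1}=a_{2}$ and $a_{4}=a_{5}$ are then deduced algebraically from the global even/odd balance of Corollary~\ref{corollary_binary_exp}(b). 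You instead construct a single fixed-point-free involution on \emph{all} subsets, matching the hard-case sets via $S\mapsto S\triangle\{9,10\}$; this requires an additional parity computation on the hexagon $\Delta_{11}$ (your case analysis over $R\cap\{3,8\}$, which does check out), but in return you never invoke Corollary~\ref{corollary_binary_exp}. The paper's argument is shorter because it sidesteps that parity check, while yours is more uniform and self-contained, yielding the full symmetry $a_{j}=a_{3-j}$ in one stroke.
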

\begin{proof}
Observe from the base $2$ and base $3$ expansions of 11 that 3 is non-essential for $11$ in base 2 and 
in base $3$.  Therefore, Lemma~\ref{lemma_flip_sign} implies that $a_j= a_{-j}$ for all $j$, or $a_1=a_5$ and $a_2=a_4$. We next focus on showing $a_0= a_3$ before proving $a_j=a_{3-j}$ for all other $j$.

Similarly to equations~\eqref{equation_a_new}
and~\eqref{equation_b_new}, since 
$8$ and 3 are essential for $11$ in base 2 but non-essential in base $3$,
we have 
\begin{align*}
\beta_{11}(S)+\beta_{11}(S \triangle \{3\})
&\equiv \begin{cases}
       0 & \text{if } |S\cap\{1,2\}|=1, \\
       3 & \text{if } |S\cap\{1,2\}|=0 \text{ or } 2
           \end{cases}
\:\:\:\:
          \bmod 6 , \\      
\beta_{11}(S)+\beta_{11}(S \triangle \{8\})
&\equiv
\begin{cases}
       0 & \text{if } |S\cap\{9,10\}|=1, \\
       3 & \text{if } |S\cap\{9,10\}|=0 \text{ or } 2
\end{cases}
\:\:\:\:
          \bmod 6.   
\end{align*}

Assume $S\subseteq [10]$ in which $\beta_{11}(S)\equiv 0 \bmod 3$.  As in 
Theorem~\ref{theorem_three_digits}, if $|S\cap \{1,2\}|=0$ or $2$, or if
$|S\cap \{9,10\}|=0$ or $2$, the descent set statistics $\beta_{11}(S)$,
$\beta_{11}(S\triangle \{3\})$, $\beta_{11}(S\triangle \{8\})$, and 
$\beta_{11}(S\triangle \{3,8\})$ contribute evenly between $a_0$ and $a_3$.

On the other hand, if $|S\cap \{1,2\}|=1$ and $|S\cap \{9,10\}|=1$, then 
$S$ is one of the four sets
in Lemma~\ref{lemma_eleven}.
Therefore, the descent set statistic of the set $S$ is non-zero modulo $3$,
and does not contribute
to either $a_{0}$ or $a_{3}$.
In conclusion, the only possible sets that do contribute
to $a_{0}$ and $a_{3}$ do so evenly, so $a_{0} = a_{3}$.

It remains to show $a_{1} = a_{2}$ and $a_{4} = a_{5}$.
Since $11$ has three digits in its binary expansion,
Corollary~\ref{corollary_binary_exp} gives that 
$a_0+a_2+a_4=a_1+a_3+a_5$.
Combining this equality with 
$a_{0} = a_{3}$, $a_{1} = a_{5}$ and $a_{2} = a_{4}$,
it follows that $a_{1}=a_{2}$ and $a_{4}=a_{5}$. 
Thus, Lemma~\ref{factor_lemma} 
implies that $\Phi_{6}$ is a factor of~$Q_{11}(t)$.
\end{proof}

This result is particular to $n=11$.
Attempts to generalize to values of $n$ of
the form $2^{c}+2+1=p^{r}+2$ have so far failed.
For these $n$ one can similarly show that $a_{0}=a_{p}$.
Unfortunately, this does not imply $a_{j}=a_{p-j}$,
which is in fact not true for all $j$.

\begin{table}
\begin{center}
  \begin{tabular}{ | c | r | c | c | l | l |}
  \hline
$n$ & $s$ & $\{a,b,c\} \bmod g$ & $k$ & Chebikin et & Our \\ 
&&&& al.\ statement & statement \\ \hline
    \hline
    11 & 3 &  --- & 3
    && Prop.~\ref{proposition_eleven}
        \\ \hline
    11 & 11 & $\{0,1,3\}$ & 7 & 
    Thm.~5.5 & Thm.~\ref{theorem_2_case_three}  \\ \hline
    13 & 13 & $\{0,2,3\}$ & 7 & 
    Thm.~5.5 & Thm.~\ref{theorem_2_case_three}  \\ \hline
    14 & 7 & $\{0,1,2\}$ & 13  & 
    Thm.~5.6 & Thm.~\ref{theorem_2_case_three} \\ \hline
    14 & 13 & $\{1,2,3\}$ & 3 
    &&  Rem.~\ref{remark_four} \\ \hline
    14 & 91 && 3 
    &&  Thm.~\ref{theorem_three_digits} \\ \hline
    19 & 19 & $\{0,1,4\}$ & 7 &
    Thm.~5.5 & Thm.~\ref{theorem_2_case_three} \\ \hline
    21 & 3 & $\{0,0,0\}$ & 2 
    && Rem.~\ref{remark_four} \\ \hline
    21 & 7 & $\{0,1,2\}$ & 13 
    && Thm.~\ref{theorem_2_case_three}  \\ \hline
    21 & 21 && 2 
    && Rem.~\ref{remark_21_22}  \\ \hline
    22 & 7 & $\{1,1,2\}$ & 3 
    && Thm.~\ref{theorem_2_case_two} \\ \hline
    22 & 11 & $\{1,2,4\}$ & 7 &
    Thm.~5.6 & Rem.~\ref{remark_four}  \\ \hline
    22 & 77 && 3
    && Rem.~\ref{remark_21_22}  \\ \hline \hline    
    56 & 3 & $\{3,4,5\} \bmod 6$ & 3
    && Rem.~\ref{remark_five} \\ \hline
    4,108 & 13 & $\{0,2,3\}$ & 7 
    && Thm.~\ref{theorem_2_case_three} \\ \hline
    16,576 & 17 & $\{6,6,7\}$ & 3 
    && Thm.~\ref{theorem_2_case_two} \\ \hline
    32,802& 11 & $\{1,5,5\}$ & 7 
    && Thm.~\ref{theorem_2_case_one} \\ \hline
\end{tabular}
\end{center}
\caption{Examples of cyclotomic factors of 
$Q_{n}(t)$ of the form $\Phi_{2s}$
where the binary expansion of $n$ has three $1$'s.}
\label{table_factors_3}
\end{table}

Table~\ref{table_factors_3} summarizes particular
values of $n$ and $s$ for which $\Phi_{2s}$ is a factor of 
$Q_{n}(t)$ with $n$ having three binary digits, as was
done in Table~\ref{table_factors_2} for $n$ with two binary
digits.

\section{The double factor $\Phi_{2}$ in the descent set polynomial}
\label{section_double_factor_Phi_2}

Our next results are about
the occurrence of the double factors in
the descent set polynomial $Q_{n}(t)$.
Here we sharpen techniques of Chebikin et al.\
to explain more double factors.

We begin by recalling the $\ab$- and the $\cd$-index of the
Boolean algebra.
Let $\zab$ denote the polynomial ring in the non-commutative
variables $\av$ and $\bv$. For $S$ a subset of $[n-1]$
define the $\ab$-monomial $u_{S} = u_{1} u_{2} \cdots u_{n-1}$
where $u_{i} = \av$ if $i \not\in S$
and $u_{i} = \bv$ if $i \in S$.
The polynomial $\Psi(B_{n})$ given by
$$  \Psi(B_{n})
  =
    \sum_{S \subseteq [n-1]} \beta_{n}(S) \cdot u_{S}  ,  $$
is known as the $\ab$-index of the Boolean algebra.
The result we need is that $\Psi(B_{n})$ can be written in terms
of $\cv = \av + \bv$ and $\dv = \av\bv + \bv\av$, which
is originally due to
Bayer and Klapper~\cite{Bayer_Klapper}.
For ways to compute $\Psi(B_{n})$
see~\cite[Proposition~8.2]{Billera_Ehrenborg_Readdy}
and~\cite{Ehrenborg_Readdy_coproducts}.
For more details 
see also Theorem~1.6.3
or Section~3.17 in~\cite{EC1}.

Define a linear function $\mathcal{L}$ from $\zab$
to $\Zzz$ by 
\begin{align*}
  \mathcal{L}(u_{S}) & = (-1)^{\beta_{n}(S)}  , 
\end{align*}
where $S$ is a subset of $[n-1]$ and $u_{S}$ is
the associated $\ab$-monomial of degree $n-1$.
Note that we abuse notation such that
for an $\ab$-monomial $u$ of degree $n-1$,
we write
$\beta_{n}(u)$ instead of $\beta_{n}(S)$,
where $u = u_{S}$.

\begin{proposition}
Let $w$ be a $\cd$-monomial of degree $2n - 1$
having $j$ $\dv$'s.
Then the following evaluation holds
$$   \mathcal{L}(w)
    =
       2^{2n-j-1} \cdot (1 - 2 \cdot \rho(n)) . $$
\label{proposition_2n_n}
\end{proposition}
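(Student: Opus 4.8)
The plan is to reduce everything to the single monomial $\cv^{2n-1}$, that is, the case $j=0$, by a local rewriting inside $\zcd$, and then to recognize $\mathcal{L}(\cv^{2n-1})$ as the evaluation $Q_{2n}(-1)$. For the base case, $\cv^{2n-1}=(\av+\bv)^{2n-1}=\sum_{S\subseteq[2n-1]}u_{S}$, so by linearity $\mathcal{L}(\cv^{2n-1})=\sum_{S\subseteq[2n-1]}(-1)^{\beta_{2n}(S)}=Q_{2n}(-1)$. Splitting this sum according to the parity of $\beta_{2n}(S)$ and recalling the definition of $\rho$ from the introduction gives $\mathcal{L}(\cv^{2n-1})=2^{2n-1}-2\cdot 2^{2n-1}\rho(2n)=2^{2n-1}(1-2\rho(2n))$, and $\rho(2n)=\rho(n)$ since $n$ and $2n$ have the same number of $1$'s in their binary expansions and $\rho$ depends only on that number. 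This is the claimed formula when $j=0$.

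The heart of the argument is the rewriting identity
\[
  \mathcal{L}(w_{1}\,\dv\, w_{2})=\tfrac{1}{2}\cdot\mathcal{L}(w_{1}\,\cv\cv\, w_{2}),
\]
valid for all $\cd$-monomials $w_{1},w_{2}$ with $\deg w_{1}+\deg w_{2}=2n-3$, where we arrange the shared factor so that the inserted $\dv$, respectively $\cv\cv$, occupies positions $p$ and $p+1$. Expanding $w_{1}$ and $w_{2}$ in the $\ab$-basis, in which every coefficient equals $1$, and using $\cv\cv=\av\av+\dv+\bv\bv$, the identity is equivalent to $\mathcal{L}(w_{1}(\av\av+\bv\bv)w_{2})=\mathcal{L}(w_{1}\,\dv\, w_{2})$, which in turn follows by summing over the $\ab$-monomials appearing in $w_{1}$ and in $w_{2}$ the four-term relation, valid for every $S_{0}\subseteq[2n-1]\setminus\{p,p+1\}$,
\[
  (-1)^{\beta_{2n}(S_{0}\cup\{p\})}+(-1)^{\beta_{2n}(S_{0}\cup\{p+1\})}=(-1)^{\beta_{2n}(S_{0})}+(-1)^{\beta_{2n}(S_{0}\cup\{p,p+1\})}.
\]

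To establish this relation, note that exactly one of the consecutive integers $p$ and $p+1$ is odd; call it $q$, and let $q'$ be the other one. Since $2n$ is even, its last binary digit is $0$, so $q$ is non-essential for $2n$ in base $2$; hence $2$ divides $\binom{2n}{q}$ by Lemma~\ref{lemma_binom}, and Lemma~\ref{lemma_flip} with $m=2$ yields $(-1)^{\beta_{2n}(T)}=(-1)^{\beta_{2n}(T\triangle\{q\})}$ for every $T\subseteq[2n-1]$. Taking $T=S_{0}$ gives $(-1)^{\beta_{2n}(S_{0})}=(-1)^{\beta_{2n}(S_{0}\cup\{q\})}$, and taking $T=S_{0}\cup\{p,p+1\}$ gives $(-1)^{\beta_{2n}(S_{0}\cup\{p,p+1\})}=(-1)^{\beta_{2n}(S_{0}\cup\{q'\})}$; adding these two equalities, and using that $\{q,q'\}=\{p,p+1\}$, is precisely the four-term relation. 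One could equally argue via Theorem~\ref{theorem_Euler}, since the non-vertex $q$ can be inserted into or deleted from the vertex set of $\Delta_{2n}$ without altering any induced subcomplex.

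With the rewriting identity available, the proposition follows by induction on $j$, the case $j=0$ being the base case above. For $j\geq 1$, factor $w=w_{1}\,\dv\, w_{2}$ at any occurrence of $\dv$; then $w_{1}\,\cv\cv\, w_{2}$ is a $\cd$-monomial of degree $2n-1$ with $j-1$ occurrences of $\dv$, so by the rewriting identity and the induction hypothesis $\mathcal{L}(w)=\tfrac{1}{2}\cdot 2^{2n-(j-1)-1}(1-2\rho(n))=2^{2n-j-1}(1-2\rho(n))$. The step I expect to be the main obstacle is the four-term sign relation; the observation that unlocks it is that, since $2n$ is even, the odd one of the two consecutive positions $p,p+1$ is automatically a non-essential element of $2n$, so toggling that position does not change $\beta_{2n}$ modulo $2$. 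The remainder is routine bookkeeping with the $\ab$-to-$\cd$ change of basis.
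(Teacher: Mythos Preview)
Your proof is correct, but it follows a genuinely different route from the paper's.

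The paper argues in one stroke: for an $\ab$-monomial $u$ in the expansion of $w$, let $v$ be the subword on the even positions $2,4,\ldots,2n-2$. Since the vertices of $\Delta_{2n}$ are exactly the even essential elements, the induced subcomplexes $\Delta_{2n}\vrule_{S}$ and $\Delta_{n}\vrule_{T}$ (with $u=u_{S}$, $v=u_{T}$) coincide, so Theorem~\ref{theorem_Euler} gives $\beta_{2n}(u)\equiv\beta_{n}(v)\bmod 2$. Every $v$ of degree $n-1$ arises this way, and a count of the free odd-position $\cv$'s shows each $v$ arises from exactly $2^{n-j}$ monomials $u$. Thus $\mathcal{L}(w)=2^{n-j}\,Q_{n}(-1)=2^{2n-j-1}(1-2\rho(n))$.

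Your proof instead inducts on $j$ via the local rewriting $\mathcal{L}(w_{1}\dv w_{2})=\tfrac{1}{2}\mathcal{L}(w_{1}\cv\cv w_{2})$, which you reduce to a four-term sign identity and then prove by toggling the odd one of the two adjacent positions, using that odd integers are non-essential for $2n$ in base $2$. The base case $j=0$ is handled by $\mathcal{L}(\cv^{2n-1})=Q_{2n}(-1)$ together with $\rho(2n)=\rho(n)$.

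Both arguments rest on the same underlying fact, the non-essentiality of the odd positions for $2n$, but exploit it differently. The paper's version is more structural: it produces the explicit identity $\mathcal{L}(w)=2^{n-j}Q_{n}(-1)$ and makes transparent why the complex $\Delta_{n}$ governs the answer. Your version is more elementary, bypassing the simplicial complex entirely in favour of Lemma~\ref{lemma_flip}, and the local rewriting identity you isolate is a pleasant statement in its own right. A minor cost is that you invoke $\rho(2n)=\rho(n)$, which the paper's route avoids by landing directly on $Q_{n}(-1)$; but this equality is immediate from the cited fact that $\rho$ depends only on the binary digit count.
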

\begin{proof}
Let $u = u_{1} u_{2} \cdots u_{2n-1}$ be
an $\ab$-monomial in the expansion of $w$.
Let $v$ be the $\ab$-monomial formed by taking the letters
in even positions from $u$,
that is, $v = u_{2} u_{4} \cdots u_{2n-2}$.
By Theorem~\ref{theorem_Euler} we have that
$$    \beta_{2n}(u)
     \equiv
         \widetilde{\chi}(\Delta_{2n}\vrule_{S})
     \equiv
         \widetilde{\chi}(\Delta_{n}\vrule_{T})
     \equiv
         \beta_{n}(v) \bmod 2  ,  $$
since the two complexes
$\Delta_{2n}\vrule_{S}$
and
$\Delta_{n}\vrule_{T}$
are identical 
where $u = u_{S}$ and $v = u_{T}$.
Furthermore,
observe that every $\ab$-monomial of degree $n-1$
appears this way.

Given an $\ab$-monomial $v$
of degree $n-1$, how many corresponding monomials $u$
can we find within the expansion of
the $\cd$-monomial $w$?
There are $n$ odd positions in $u$ to fill in.
If an odd position is covered by a $\dv$ in $w$,
there is a unique way to fill it in.
Note that there are $n-j$ odd positions in $u$ associated with
$\cv$'s in $w$.
Hence there are $2^{n-j}$ ways to fill in $v$ to get an
$\ab$-monomial $u$ in the expansion of $w$.
Now
\begin{align*}
\mathcal{L}(w)
& =
\sum_{u} (-1)^{\beta_{2n}(u)} \\
& =
2^{n-j} \cdot \sum_{v} (-1)^{\beta_{n}(v)} \\
& =
2^{n-j} \cdot Q_{n}(-1) \\
& =
2^{2n-j-1} \cdot (1 - 2 \cdot \rho(n)) ,
\end{align*}
where the first sum is over all $\ab$-monomials $u$
occurring in the expansion of $w$
and the second sum is over all $\ab$-monomials $v$ of degree $n-1$.
\end{proof}

\begin{theorem}
If $\Phi_{2}$ is a factor of $Q_{2n}(t)$
then $\Phi_{2}$ is a double factor of $Q_{2n}(t)$.
\label{theorem_double_factor}
\end{theorem}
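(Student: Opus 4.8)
The plan is to apply the Fact from Section~\ref{section_preliminaries}: since $Q_{2n}(t)$ lies in $\Qqq[t]$, the cyclotomic polynomial $\Phi_2 = t+1$ is a double factor exactly when $-1$ is a root of multiplicity at least two, that is, when $Q_{2n}(-1) = 0$ and $Q_{2n}'(-1) = 0$. The first equality is nothing but the hypothesis that $\Phi_2$ divides $Q_{2n}(t)$, so everything reduces to establishing $Q_{2n}'(-1) = 0$.

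The key is to express both evaluations through the linear function $\mathcal{L}$ and the $\ab$-index. Differentiating $Q_{2n}(t) = \sum_{S \subseteq [2n-1]} t^{\beta_{2n}(S)}$ and recalling that $\Psi(B_{2n}) = \sum_{S \subseteq [2n-1]} \beta_{2n}(S) \cdot u_S$ with $\mathcal{L}(u_S) = (-1)^{\beta_{2n}(S)}$, I obtain
$$
   Q_{2n}(-1) = \mathcal{L}(\cv^{2n-1})
   \qquad\text{and}\qquad
   Q_{2n}'(-1) = -\,\mathcal{L}\bigl(\Psi(B_{2n})\bigr) ,
$$
where the first identity uses the expansion $\cv^{2n-1} = (\av+\bv)^{2n-1} = \sum_{S \subseteq [2n-1]} u_S$.

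Now I would feed in Proposition~\ref{proposition_2n_n}. Applied to the $\cd$-monomial $\cv^{2n-1}$, which has degree $2n-1$ and no $\dv$'s, it gives $Q_{2n}(-1) = \mathcal{L}(\cv^{2n-1}) = 2^{2n-1} \cdot (1 - 2\rho(n))$. Since $\Phi_2$ divides $Q_{2n}(t)$ this quantity is zero, forcing $1 - 2\rho(n) = 0$. But then Proposition~\ref{proposition_2n_n} says $\mathcal{L}(w) = 2^{2n-j-1} \cdot (1 - 2\rho(n)) = 0$ for \emph{every} $\cd$-monomial $w$ of degree $2n-1$, whatever its number $j$ of $\dv$'s. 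Because the $\ab$-index $\Psi(B_{2n})$ is, by the Bayer--Klapper theorem, a $\Zzz$-linear combination of such $\cd$-monomials, linearity of $\mathcal{L}$ gives $\mathcal{L}(\Psi(B_{2n})) = 0$, hence $Q_{2n}'(-1) = 0$, which is what we needed.

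I expect no serious obstacle: the whole argument rests on the observation that Proposition~\ref{proposition_2n_n} makes $\mathcal{L}$ annihilate the entire $\cd$-span as soon as it annihilates $\cv^{2n-1}$, together with the fact that the derivative $Q_{2n}'(-1)$ is, up to a sign, the value of $\mathcal{L}$ on the $\cd$-polynomial $\Psi(B_{2n})$. The only places demanding a little care are the bookkeeping identity $\cv^{2n-1} = \sum_{S} u_S$ and the sign in $Q_{2n}'(-1) = -\mathcal{L}(\Psi(B_{2n}))$; once these are pinned down the theorem is immediate.
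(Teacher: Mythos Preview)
Your proposal is correct and follows essentially the same approach as the paper's proof: both compute $Q'_{2n}(-1) = -\mathcal{L}(\Psi(B_{2n}))$, invoke the Bayer--Klapper $\cd$-index, and use Proposition~\ref{proposition_2n_n} together with $\rho(n)=1/2$ to conclude that $\mathcal{L}$ kills every $\cd$-monomial of degree $2n-1$. You make the deduction $\Phi_2 \mid Q_{2n}(t) \Rightarrow \rho(n)=1/2$ more explicit via the identity $Q_{2n}(-1)=\mathcal{L}(\cv^{2n-1})$, which the paper leaves implicit, but the argument is otherwise identical.
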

\begin{proof}
Observe that
\begin{align*}
Q^{\prime}_{2n}(t)
& =
\sum_{S} \beta_{2n}(S) \cdot t^{\beta_{2n}(S) - 1} .
\end{align*}
Hence evaluating $Q^{\prime}_{2n}(t)$ at $t = -1$, we obtain
\begin{align*}
Q^{\prime}_{2n}(-1)
& =
- \sum_{S} \beta_{2n}(S) \cdot (-1)^{\beta_{2n}(S)} \\
& =
- \mathcal{L}\left( \sum_{S} \beta_{2n}(S) \cdot u_{S} \right) \\
& =
- \mathcal{L}( \Psi(B_{2n}) ) .
\end{align*}
Now if $\Phi_{2}$ is a factor of $Q_{2n}(t)$, we have $\rho(n) = 1/2$.
Since $\Psi(B_{2n})$ can be expressed in terms of the two
variables $\cv$ and $\dv$, we conclude that
$\mathcal{L}(\Psi(B_{2n})) = 0$.
Hence $-1$ is a double root of $Q_{2n}(t)$,
yielding the conclusion.
\end{proof}

Now extending Theorem~7.3 in~\cite{C_E_P_R}
we have the next result.
\begin{corollary}
If the binary expansion of $n$ has three $1'$s
then $\Phi_{2}^{2}$ divides $Q_{2n}(t)$.
\end{corollary}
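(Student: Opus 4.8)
The plan is to deduce this from Theorem~\ref{theorem_double_factor}, so the only work is to verify its hypothesis, namely that $\Phi_{2}$ is a factor of $Q_{2n}(t)$ whenever the binary expansion of $n$ has three $1$'s. First I would note that multiplying by $2$ merely shifts the binary expansion, so $2n$ has exactly as many $1$'s in its binary expansion as $n$; in particular, if $n$ has three $1$'s then so does $2n$. Applying Theorem~\ref{theorem_binary_exp}(b) to the integer $2n$, the number of subsets $S \subseteq [2n-1]$ with $\beta_{2n}(S)$ even equals the number with $\beta_{2n}(S)$ odd. Equivalently $\rho(2n) = 1/2$, and hence
$$
Q_{2n}(-1) = \sum_{S \subseteq [2n-1]} (-1)^{\beta_{2n}(S)} = 0 ,
$$
so $t = -1$ is a root of $Q_{2n}(t)$ and $\Phi_{2} = t+1$ divides $Q_{2n}(t)$.

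With the hypothesis in hand I would simply invoke Theorem~\ref{theorem_double_factor}: since $\Phi_{2}$ divides $Q_{2n}(t)$, it is a double factor, that is, $\Phi_{2}^{2}$ divides $Q_{2n}(t)$, which is exactly the assertion. (As an aside, one could also replace the appeal to $Q_{2n}(-1) = 0$ by citing Corollary~\ref{corollary_binary_exp}(b) with $s = 1$, which gives $a_{2,0} = a_{2,1}$ and hence the same conclusion.)

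There is essentially no obstacle here; the corollary is a direct packaging of Theorem~\ref{theorem_double_factor} together with the Chebikin et al.\ digit-count result. The one point requiring a moment's care is the bookkeeping that the input to Theorem~\ref{theorem_double_factor} concerns the polynomial $Q_{2n}$, so one must count the $1$'s in the binary expansion of $2n$ rather than of $n$; since this count is preserved under doubling, the hypothesis transfers immediately and no further estimate is needed.
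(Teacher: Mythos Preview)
Your proposal is correct and follows exactly the intended route: the paper presents this corollary without an explicit proof, treating it as immediate from Theorem~\ref{theorem_double_factor} together with the Chebikin et al.\ result (Theorem~\ref{theorem_binary_exp}(b)) that $\rho(m)=1/2$ whenever $m$ has two or three $1$'s in its binary expansion. Your careful observation that $n$ and $2n$ have the same number of binary $1$'s is precisely the bookkeeping needed, and the rest is just unpacking the hypothesis of Theorem~\ref{theorem_double_factor}.
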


\section{The double factor $\Phi_{2p}$ in $Q_{2q}(t)$}

Throughout this section,
assume $q$ is an odd prime power, that is, $q=p^r$ where $p$ is prime and
$r$ is a positive integer.

Observe that by Theorem~6.1, part (iv) in~\cite{C_E_P_R}
the cyclotomic polynomial $\Phi_{2p}$
is a factor of the descent set polynomial $Q_{2q}(t)$.
Hence we concentrate on extending Theorem~7.5 from~\cite{C_E_P_R}
to show that $\Phi_{2p}$ is a double factor in this section.

\begin{theorem}
If $\rho(q) = 1/2$,
then the cyclotomic polynomial $\Phi_{2p}$
is a double factor of the descent set polynomial $Q_{2q}(t)$.
\label{theorem_double_factor_two}
\end{theorem}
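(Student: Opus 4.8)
By Theorem~6.1, part~(iv) of~\cite{C_E_P_R} the cyclotomic polynomial $\Phi_{2p}$ already divides $Q_{2q}(t)$, so $\omega = e^{i\pi/p}$, a primitive $2p$-th root of unity, is a root of $Q_{2q}(t)$. By the Fact in Section~\ref{section_preliminaries} it therefore suffices to show that $\omega$ is a root of the derivative as well, that is, $Q_{2q}^{\prime}(\omega) = 0$. The plan is to imitate the proof of Theorem~\ref{theorem_double_factor}, replacing the linear functional $u_{S} \mapsto (-1)^{\beta_{2q}(S)}$ by a ``$2p$-th root of unity'' version and Proposition~\ref{proposition_2n_n} by a corresponding evaluation.

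First I would differentiate and rewrite $Q_{2q}^{\prime}(\omega) = \omega^{-1} \sum_{S \subseteq [2q-1]} \beta_{2q}(S) \cdot \omega^{\beta_{2q}(S)}$. Introduce the $\Zzz[\omega]$-linear functional $\mathcal{L}$ on $\zab$ determined by $\mathcal{L}(u_{S}) = \omega^{\beta_{2q}(S)}$; then the displayed sum equals $\mathcal{L}(\Psi(B_{2q}))$, where $\Psi(B_{2q}) = \sum_{S} \beta_{2q}(S) \cdot u_{S}$ is the $\ab$-index of the Boolean algebra. Since $\Psi(B_{2q})$ is a polynomial in $\cv$ and $\dv$ (by Bayer--Klapper), it is enough to prove $\mathcal{L}(w) = 0$ for every $\cd$-monomial $w$ of degree $2q-1$.

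To evaluate $\mathcal{L}(w) = \sum_{u} \omega^{\beta_{2q}(u)}$, the sum over the $\ab$-monomials $u$ appearing in the expansion of $w$, I would determine $\beta_{2q}(u)$ modulo $2p$ from two congruences. Modulo $2$, exactly as in the proof of Proposition~\ref{proposition_2n_n}, one has $\beta_{2q}(u) \equiv \beta_{q}(v) \bmod 2$, where $v$ is the subword of $u$ in the even positions, because every vertex of $\Delta_{2q}$ lies in an even position. Modulo $p$, Lemma~\ref{lemma_power_prime} gives $F(B_{2q}) \equiv M_{(2p^{r})} + 2 M_{(p^{r},p^{r})} \bmod p$ (using $2q = 2p^{r}$ with $p$ odd), and inclusion--exclusion then yields $\beta_{2q}(S) \equiv (-1)^{|S| + [q \in S]} \bmod p$. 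Now group the monomials $u$ by their even subword $v$ and sum over the free letters in the odd positions. The key point is that the letter of $u$ in the single distinguished position $q$ contributes equally to $|S|$ and to $[q \in S]$, hence cancels from the parity $|S| + [q \in S]$ that controls the residue modulo $p$; consequently, among the free choices the two residues $\pm 1$ modulo $p$ occur symmetrically, and the inner sum collapses to $\Omega(\varepsilon, 1) + \Omega(\varepsilon, -1) = \pm (\omega + \omega^{-1})$, where $\Omega(\varepsilon, \delta)$ denotes the power $\omega^{z}$ with $z \equiv \varepsilon \bmod 2$ and $z \equiv \delta \bmod p$, and $\varepsilon = \beta_{q}(v) \bmod 2$. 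Summing over all $v$ then produces a fixed nonzero algebraic number times $\sum_{v} (-1)^{\beta_{q}(v)} = Q_{q}(-1) = 2^{q-1}(1 - 2\rho(q))$. So $\mathcal{L}(w)$ is a multiple of $1 - 2\rho(q)$, and the hypothesis $\rho(q) = 1/2$ forces $\mathcal{L}(w) = 0$.

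I expect the main obstacle to be the combinatorial bookkeeping in that last step, in particular the configurations in which position $q$ is not a free $\cv$-position: either $q$ lies inside a $\dv$, so its letter is pinned down by $v$ and the same cancellation still applies; or $q$ is the unique $\cv$-covered odd position, which happens only for the single monomial $\dv^{(q-1)/2} \cv \dv^{(q-1)/2}$, where a direct computation using $\Omega(0,\delta) + \Omega(1,\delta) = 0$ shows that $\mathcal{L}(w)$ is again a multiple of $1 - 2\rho(q)$. Having established $\mathcal{L}(w) = 0$ for all $\cd$-monomials of degree $2q-1$, we conclude $\mathcal{L}(\Psi(B_{2q})) = 0$, hence $Q_{2q}^{\prime}(\omega) = 0$, which together with $Q_{2q}(\omega) = 0$ proves that $\Phi_{2p}$ is a double factor of $Q_{2q}(t)$.
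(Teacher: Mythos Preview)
Your proposal is correct and takes essentially the same approach as the paper. The paper introduces the same complex functional but splits it into real and imaginary parts $\mathcal{C}(u_{S})=\cos(\pi\beta_{2q}(S)/p)$ and $\mathcal{S}(u_{S})=\sin(\pi\beta_{2q}(S)/p)$ and treats them with separate lemmas (using Lemma~\ref{lemma_q} for $\mathcal{C}$ and a single odd $\cv$-position $\neq q$ for $\mathcal{S}$), whereas you work directly with $u_{S}\mapsto\omega^{\beta_{2q}(S)}$ and group by the even subword~$v$; both routes isolate the same exceptional monomial $\dv^{(q-1)/2}\cv\,\dv^{(q-1)/2}$ and dispatch it by a direct computation reducing to $Q_{q}(-1)=2^{q-1}(1-2\rho(q))$.
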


In order to prove this theorem we introduce two new
linear functions
$\mathcal{C}$ and $\mathcal{S}$
from $\ab$-polynomials of degree $2q-1$ to the real field $\Rrr$ by
\begin{align}
\mathcal{C}(u_{S}) & = \cos(\pi/p \cdot \beta_{2q}(S)) , 
\label{equation_def_C_1} \\
\mathcal{S}(u_{S}) & = \sin(\pi/p \cdot \beta_{2q}(S)) .
\label{equation_def_S_1}
\end{align}
Our goal is to show that $\mathcal{C}(w) = \mathcal{S}(w) = 0$
for any $\cd$-monomial $w$
of degree $2q-1$.
We do this by a series of lemmas.
First from Corollary~5.3
in~\cite{C_E_P_R}, we have the following result.
\begin{lemma}
The descent set statistic~$\beta_{2q}$ modulo
$p$ is given by
$$   \beta_{2q}(S) \equiv (-1)^{|S-\{q\}|} \bmod p.$$
\label{lemma_q}
\end{lemma}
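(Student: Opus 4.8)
The plan is to exploit the fact that almost every element of $[2q-1]$ is non-essential for $2q$ in base $p$, so that the flip relation of Lemma~\ref{lemma_flip} strips a general subset $S$ down to a trivial one. Since $q = p^{r}$ is an odd prime power, $p$ is odd, and the base-$p$ expansion of $2q = 2 \cdot p^{r}$ has a single nonzero digit, namely $2$ in position $r$. Hence an integer $k$ with $1 \le k \le 2q-1$ is essential for $2q$ in base $p$ only if all its base-$p$ digits away from position $r$ vanish and its digit in position $r$ is at most $2$; the only such $k$ in range is $k = q$. By Lemma~\ref{lemma_binom} this means $p \mid \binom{2q}{k}$ for every $k \in [2q-1]$ with $k \ne q$.

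First I would apply Lemma~\ref{lemma_flip} with $m = p$ repeatedly: for each element $k \in S$ with $k \ne q$ we have $\beta_{2q}(S) \equiv - \beta_{2q}(S \triangle \{k\}) \bmod p$, so removing all such $k$ one at a time gives
\[
  \beta_{2q}(S) \;\equiv\; (-1)^{|S \setminus \{q\}|} \cdot \beta_{2q}(S \cap \{q\}) \bmod p .
\]
It then remains to evaluate $\beta_{2q}$ on $\emptyset$ and on $\{q\}$. The value $\beta_{2q}(\emptyset) = 1$ is immediate, and MacMahon's multiplication theorem applied with the subset $\emptyset$ and $k = q$ gives $\beta_{2q}(\emptyset) + \beta_{2q}(\{q\}) = \binom{2q}{q}$, where $\binom{2q}{q} \equiv \binom{2}{1} = 2 \bmod p$ by Lucas' theorem; hence $\beta_{2q}(\{q\}) \equiv 1 \bmod p$ as well. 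Since $\beta_{2q}$ is therefore $\equiv 1 \bmod p$ on both $\emptyset$ and $\{q\}$, the displayed congruence collapses to $\beta_{2q}(S) \equiv (-1)^{|S \setminus \{q\}|} = (-1)^{|S - \{q\}|} \bmod p$, which is the assertion.

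I do not expect a genuine obstacle here: once the essential-element count is in hand the rest is short bookkeeping. A marginally longer alternative, closer in spirit to the quasi-symmetric computations used elsewhere in the paper, is to determine the flag $f$-vector of $B_{2q}$ modulo $p$ directly from Lemma~\ref{lemma_power_prime}: one gets $F(B_{2q}) \equiv M_{(p^{r})}^{2} = M_{(2q)} + 2 \, M_{(q,q)} \bmod p$, so that $f_{\emptyset} \equiv 1$, $f_{\{q\}} \equiv 2$, and $f_{S} \equiv 0$ for all other $S$, and then feed this into the inclusion--exclusion formula~\eqref{equation_inclusion-exclusion}, splitting on whether $q \in S$. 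The only point that needs a moment's care in that route is the uniqueness of the composition of $2q$ produced in each monomial of $M_{(p^{r})}^{2}$, i.e. the knapsack-type argument already used in the proof of Theorem~\ref{theorem_Euler}, now in base $p$ rather than base $2$.
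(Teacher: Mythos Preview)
Your argument is correct. The paper does not give a full proof but cites~\cite{C_E_P_R} and supplies the one-line hint $(x_{1}+x_{2}+\cdots)^{2q} \equiv (x_{1}^{q}+x_{2}^{q}+\cdots)^{2} \equiv M_{(2q)} + 2M_{(q,q)} \bmod p$; that is precisely your ``alternative'' route via Lemma~\ref{lemma_power_prime} and inclusion--exclusion. Your primary route is genuinely different: instead of computing the whole flag $f$-vector modulo $p$ and summing, you observe that $q$ is the sole essential element of $[2q-1]$ in base $p$ and peel off the remaining elements of $S$ one at a time with Lemma~\ref{lemma_flip}, reducing to the two anchor values $\beta_{2q}(\emptyset)=1$ and $\beta_{2q}(\{q\})\equiv\binom{2q}{q}-1\equiv 1 \bmod p$ via Lucas. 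This buys you a proof that never touches quasi-symmetric functions or the knapsack uniqueness issue you flag, at the cost of invoking MacMahon's theorem once for the anchor; the paper's route, by contrast, packages everything into a single generating-function identity and reads off all flag $f$-vector entries simultaneously, which is why it generalises cleanly to the computation of $F(B_{q+1})$ in Section~\ref{section_q_plus_1}.
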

\noindent This is straightforward to show using that
$(x_{1}+x_{2}+\cdots)^{2q} \equiv
  (x_{1}^{q}+x_{2}^{q}+\cdots)^{2} \equiv
  M_{(2q)} + 2M_{(q,q)} \bmod p$.

\begin{lemma}
For any $\ab$-monomial $u$ of degree $2q-1$,
we have $\mathcal{C}(u) = - \cos(\pi/p) \cdot (-1)^{\beta_{2q}(u)}$.
\end{lemma}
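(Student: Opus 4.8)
The plan is to derive the identity directly from the explicit congruence for $\beta_{2q}$ modulo $p$ in Lemma~\ref{lemma_q}, together with the elementary facts that $\cos$ is an even function and $\cos(x+m\pi)=(-1)^{m}\cos(x)$. Fix a subset $S\subseteq[2q-1]$ and put $u=u_{S}$. By Lemma~\ref{lemma_q} there is a sign $\varepsilon=(-1)^{|S-\{q\}|}\in\{1,-1\}$ with $\beta_{2q}(S)\equiv\varepsilon\bmod p$, so (using that $p>2$, so that $1\not\equiv-1\bmod p$) we may write $\beta_{2q}(S)=mp+\varepsilon$ for a uniquely determined integer $m$.

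First I would evaluate the cosine. Since $\tfrac{\pi}{p}\,\beta_{2q}(S)=m\pi+\varepsilon\tfrac{\pi}{p}$ and $\cos$ is even,
\[
  \mathcal{C}(u)=\cos\!\left(\frac{\pi}{p}\,\beta_{2q}(S)\right)
               =\cos\!\left(m\pi+\varepsilon\,\frac{\pi}{p}\right)
               =(-1)^{m}\cos\!\left(\varepsilon\,\frac{\pi}{p}\right)
               =(-1)^{m}\cos\!\left(\frac{\pi}{p}\right).
\]
Second, I would match the sign $(-1)^{m}$ with $-(-1)^{\beta_{2q}(S)}$. Here the hypothesis that $p$ is an odd prime is essential: since $p$ and $\varepsilon$ are both odd, $\beta_{2q}(S)=mp+\varepsilon\equiv m+1\pmod 2$, whence $(-1)^{\beta_{2q}(S)}=(-1)^{m+1}=-(-1)^{m}$, i.e.\ $(-1)^{m}=-(-1)^{\beta_{2q}(S)}$. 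Substituting into the displayed equation yields $\mathcal{C}(u)=-\cos(\pi/p)\cdot(-1)^{\beta_{2q}(S)}$, as claimed.

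There is no genuine obstacle in this argument; the only thing demanding a moment's care is the parity bookkeeping in the last step, where one must use that both $p$ and $\varepsilon=\pm1$ are odd so that $mp+\varepsilon$ and $m+1$ have the same parity --- this is exactly the point at which the oddness of $q=p^{r}$ (equivalently, of the prime $p$) enters. The companion statement for $\mathcal{S}$, which I would expect to be proved next, follows the same scheme using $\sin(x+m\pi)=(-1)^{m}\sin(x)$, except that since $\sin$ is \emph{odd} rather than even the resulting evaluation will retain a dependence on $\varepsilon=(-1)^{|S-\{q\}|}$ in its sign.
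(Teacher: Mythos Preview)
Your proof is correct and follows essentially the same approach as the paper: both use Lemma~\ref{lemma_q} to pin down $\beta_{2q}(u)$ modulo~$p$ and then track the parity to finish. The paper phrases the last step as a brief case analysis on the four residues $\pm1,\,p\pm1$ modulo~$2p$, whereas you write $\beta_{2q}(u)=mp+\varepsilon$ and compute directly---these are cosmetically different renderings of the same argument.
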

\begin{proof}
According to Lemma~\ref{lemma_q} there are only four possible values for
$\beta_{2q}(u) \bmod 2p$.
When $\beta_{2q}(u)$ is odd, then the only two values
for $\beta_{2q}(u)$ modulo $2p$ are $\pm 1$,
in which case $\mathcal{C}(u)$ is $\cos(\pi/p)$.
Similarly, when $\beta_{2q}(u)$ is even,
it can only take the values $p \pm 1$,
and hence $\mathcal{C}(u)$ is $-\cos(\pi/p)$.
\end{proof}

\begin{lemma}
If $\rho(q) = 1/2$, then for a $\cd$-monomial $w$,
we have $\mathcal{C}(w) = 0$.
\label{lemma_C}
\end{lemma}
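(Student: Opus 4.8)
The plan is to express $\mathcal{C}(w)$ in terms of the linear functional $\mathcal{L}$, whose values on $\cd$-monomials were already computed in Proposition~\ref{proposition_2n_n}. Since $\mathcal{C}$ is linear and a $\cd$-monomial $w$ of degree $2q-1$ expands into a sum of pairwise distinct $\ab$-monomials, each with coefficient $1$, we have $\mathcal{C}(w) = \sum_{u} \mathcal{C}(u)$, the sum ranging over the $\ab$-monomials $u$ appearing in the expansion of $w$.

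First I would apply the preceding lemma, which states $\mathcal{C}(u) = -\cos(\pi/p) \cdot (-1)^{\beta_{2q}(u)}$ for every $\ab$-monomial $u$ of degree $2q-1$. Factoring the constant $-\cos(\pi/p)$ out of the sum gives
$$\mathcal{C}(w) = -\cos(\pi/p) \cdot \sum_{u} (-1)^{\beta_{2q}(u)} = -\cos(\pi/p) \cdot \mathcal{L}(w),$$
where the second equality is just the definition of $\mathcal{L}$ together with the convention $\beta_{2q}(u) = \beta_{2q}(S)$ for $u = u_S$. Next I would invoke Proposition~\ref{proposition_2n_n} with $n = q$, so that $\deg w = 2q - 1 = 2n - 1$: if $w$ has $j$ letters $\dv$, then $\mathcal{L}(w) = 2^{2q - j - 1} \cdot (1 - 2 \cdot \rho(q))$. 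The hypothesis $\rho(q) = 1/2$ forces $\mathcal{L}(w) = 0$, whence $\mathcal{C}(w) = 0$.

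There is no genuine obstacle in this argument; it is a short chain of substitutions. The only points requiring care are purely bookkeeping: confirming that the $\ab$-expansion of a $\cd$-monomial has all coefficients equal to $1$, so that the linearity step produces exactly the sum appearing in the definition of $\mathcal{L}$; and matching the degree $2q - 1$ of $w$ against the degree ``$2n-1$'' in Proposition~\ref{proposition_2n_n}, so that the proportion entering the final formula is $\rho(q)$ and not $\rho(2q)$. An essentially identical computation, using $\sin$ in place of $\cos$ and the analogous evaluation of $\mathcal{S}(u)$, should then dispose of $\mathcal{S}(w)$ as well, after which the two vanishings can be combined to show that $e^{i\pi/p}$ is a double root of $Q_{2q}(t)$.
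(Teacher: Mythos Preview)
Your proof of the lemma itself is correct and is essentially identical to the paper's: both apply the preceding lemma to write $\mathcal{C}(u)=-\cos(\pi/p)\cdot(-1)^{\beta_{2q}(u)}$, sum over the $\ab$-monomials in the expansion of $w$ to obtain $-\cos(\pi/p)\cdot\mathcal{L}(w)$, and then invoke Proposition~\ref{proposition_2n_n} with $n=q$ and the hypothesis $\rho(q)=1/2$.

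Your closing remark, however, is mistaken. An ``essentially identical computation'' does \emph{not} dispose of $\mathcal{S}(w)$. The reason the $\mathcal{C}$ case collapses is that $\cos(\pi/p\cdot j)$ takes the same value for $j\equiv 1$ and $j\equiv -1\bmod p$ (and likewise for $j\equiv p\pm 1$), so $\mathcal{C}(u)$ depends only on the \emph{parity} of $\beta_{2q}(u)$. For sine one has $\sin(\pi/p)\neq\sin(-\pi/p)$, so $\mathcal{S}(u)$ depends on whether $\beta_{2q}(u)\equiv 1$ or $-1\bmod p$, i.e.\ by Lemma~\ref{lemma_q} on the parity of $|S\setminus\{q\}|$, not on the parity of $\beta_{2q}(u)$. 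There is therefore no uniform formula $\mathcal{S}(u)=c\cdot(-1)^{\beta_{2q}(u)}$, and the paper treats $\mathcal{S}$ by a separate, more delicate argument: first showing $\mathcal{S}(u\cdot\cv\cdot v)=0$ whenever the $\cv$ sits in a suitable odd position, which kills all $\cd$-monomials except $\dv^{(q-1)/2}\cv\dv^{(q-1)/2}$, and then handling that exceptional monomial directly.
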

\begin{proof}
Assume that the $\cd$-monomial $w$ has $j$ $\dv$'s.
Now by the previous lemma we have
\begin{align*}
\mathcal{C}(w)
& =
\sum_{u} \mathcal{C}(u) \\
& =
-\cos(\pi/p) \cdot \sum_{u} (-1)^{\beta_{2q}(u)} \\
& =
-\cos(\pi/p) \cdot \mathcal{L}(w) \\
& =
-\cos(\pi/p) \cdot 2^{2q-j-1} \cdot (1 - 2 \cdot \rho(q)) .
\end{align*}
Since $\rho(q) = 1/2$, we obtain the conclusion $\mathcal{C}(w) = 0$.
\end{proof}

\begin{lemma}
Let $u$ and $v$ be two $\ab$-monomials such that
$\deg(u) + \deg(v) = 2q-2$, both
$\deg(u)$ and $\deg(v)$ are even,
and both $\deg(u)$ and $\deg(v)$ differ from $q-1$.
Then the functional $\mathcal{S}$ applied to
$u \cdot \cv \cdot v$ is zero, that is, $\mathcal{S}(u \cdot \cv \cdot v) = 0$.
\end{lemma}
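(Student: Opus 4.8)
The plan is to reduce the statement to the elementary identity $\sin A+\sin B=2\sin\!\bigl(\tfrac{A+B}{2}\bigr)\cos\!\bigl(\tfrac{A-B}{2}\bigr)$, and then to show that the relevant value of $\tfrac{A+B}{2}$ is an integer multiple of $\pi$ by combining a congruence modulo $2$ with one modulo $p$. First I would set $k=\deg(u)+1$, so that $1\le k\le 2q-1$, and expand $\cv=\av+\bv$ to write $u\cdot\cv\cdot v=u_{S}+u_{S'}$, where $S\subseteq[2q-1]$ is the subset with $k\notin S$ and $u_{S}=u\cdot\av\cdot v$, while $S'=S\cup\{k\}=S\triangle\{k\}$. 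By linearity of $\mathcal{S}$,
$$\mathcal{S}(u\cdot\cv\cdot v)=\sin\!\left(\frac{\pi}{p}\,\beta_{2q}(S)\right)+\sin\!\left(\frac{\pi}{p}\,\beta_{2q}(S')\right).$$
Two features of the hypotheses are exactly what make this work: since $\deg(u)\ne q-1$ we have $k\ne q$ (and then $\deg(v)=2q-2-\deg(u)\ne q-1$ automatically), and since $\deg(u)$ is even, $k$ is odd.

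Next I would establish $\beta_{2q}(S)+\beta_{2q}(S')\equiv 0\bmod 2p$. Modulo $2$: because $2q=2p^{r}$ is even and $k$ is odd, the units binary digit of $k$ exceeds that of $2q$, so $k$ is non-essential for $2q$ in base $2$; by Lemma~\ref{lemma_binom} applied with the prime $2$ we get $2\mid\binom{2q}{k}$, and then MacMahon's multiplication theorem forces $\beta_{2q}(S)+\beta_{2q}(S')$ to be even. Modulo $p$: Lemma~\ref{lemma_q} gives $\beta_{2q}(S)\equiv(-1)^{|S-\{q\}|}$ and $\beta_{2q}(S')\equiv(-1)^{|S'-\{q\}|}\bmod p$; since $k\ne q$ we have $|S'-\{q\}|=|S-\{q\}|+1$, so the two right-hand sides are negatives of each other and the sum vanishes modulo $p$. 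As $\gcd(2,p)=1$, the sum is therefore divisible by $2p$.

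Finally, applying the sum-to-product identity,
$$\mathcal{S}(u\cdot\cv\cdot v)=2\sin\!\left(\frac{\pi}{2p}\bigl(\beta_{2q}(S)+\beta_{2q}(S')\bigr)\right)\cos\!\left(\frac{\pi}{2p}\bigl(\beta_{2q}(S)-\beta_{2q}(S')\bigr)\right),$$
and the argument of the sine factor lies in $\pi\Zzz$ by the previous paragraph, hence $\mathcal{S}(u\cdot\cv\cdot v)=0$. I expect the only genuinely delicate point to be pinning down why both degree hypotheses are needed: without $k\ne q$ the parities of $|S-\{q\}|$ and $|S'-\{q\}|$ would coincide (and indeed $\binom{2q}{q}\not\equiv 0\bmod p$), so the congruence modulo $p$ would fail; and without $k$ odd the binomial coefficient $\binom{2q}{k}$ need not be even, so the congruence modulo $2$ would fail. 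Everything else is routine bookkeeping with MacMahon's theorem and Lemmas~\ref{lemma_binom} and~\ref{lemma_q}.
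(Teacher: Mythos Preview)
Your proof is correct and follows essentially the same route as the paper. Both arguments establish $\beta_{2q}(S)+\beta_{2q}(S\triangle\{k\})\equiv 0\bmod 2p$ for $k=\deg(u)+1$ and then use that $\sin(\pi x/p)$ depends only on $x\bmod 2p$; the paper phrases the final step as ``$\sin$ is odd'' rather than via the sum-to-product identity, and it obtains the congruence in one stroke from Lemma~\ref{lemma_flip} with modulus $2p$ (noting that $k$ is non-essential for $2q$ in base $p$ as well as base $2$, so $2p\mid\binom{2q}{k}$), whereas you handle the mod~$2$ part that way and the mod~$p$ part via Lemma~\ref{lemma_q}---a harmless variation.
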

\begin{proof}
Since $\deg(u) + 1$ is non-essential for $2 \cdot q$ both
in base $2$ and in base $p$,
we have by Lemma~\ref{lemma_flip} that
$$  \beta_{2q}(u \cdot \av \cdot v) \equiv
     -\beta_{2q}(u \cdot \bv \cdot v) \bmod 2p  .  $$
Since $\sin$ is an odd function, this identity directly implies
$\mathcal{S}(u \cdot \av \cdot v) = -\mathcal{S}(u \cdot \bv \cdot v)$.
\end{proof}

\begin{lemma}
Let $w$ be a $\cd$-monomial of degree $2q-1$
different from the monomial
$\dv^{(q-1)/2} \cv \dv^{(q-1)/2}$.
Then $\mathcal{S}(w)$ vanishes.
\end{lemma}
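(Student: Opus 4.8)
The plan is to reduce the statement to the previous lemma, which shows that $\mathcal{S}$ annihilates any product $u \cdot \cv \cdot v$ where $u$ and $v$ are $\ab$-monomials of even degree, each different from $q-1$. Two preliminary observations set this up. First, since $\deg(w) = 2q-1$ is odd while each $\cv$ contributes $1$ and each $\dv$ contributes $2$ to the degree, the $\cd$-monomial $w$ contains an odd, hence positive, number of $\cv$'s. Second, $\mathcal{S}$ is linear, so if $w = w' \cdot \cv \cdot w''$ is the factorization of $w$ at one of its $\cv$'s (with $w'$, $w''$ the resulting $\cd$-monomials), then expanding $w'$ and $w''$ into $\ab$-monomials and applying the previous lemma to each resulting term gives $\mathcal{S}(w) = 0$, provided $\deg(w')$ and $\deg(w'')$ are both even and both differ from $q-1$. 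Because $\deg(w') + \deg(w'') = 2q - 2 = 2(q-1)$ and $q$ is odd (so $q-1$ is even), the two conditions $\deg(w') \neq q-1$ and $\deg(w'') \neq q-1$ are equivalent; thus it suffices to find a single $\cv$ in $w$ whose prefix has even degree not equal to $q-1$.

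Next I would track the prefix degrees. Reading $w$ from left to right, the degree of the prefix preceding the $k$-th $\cv$ equals $(k-1)$ plus twice the number of $\dv$'s lying before it, so it is even precisely when $k$ is odd. Moreover, the prefix of the third $\cv$ exceeds that of the first by at least $2$, the prefix of the fifth exceeds that of the third by at least $2$, and so on, so the odd-indexed $\cv$'s have pairwise distinct even prefix degrees. I then split into cases on the number of $\cv$'s in $w$. If $w$ has at least three $\cv$'s, then it has at least two odd-indexed $\cv$'s, whose prefix degrees are distinct even integers; at most one of them equals $q-1$, so factoring $w$ at the other one gives $\mathcal{S}(w) = 0$. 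If $w$ has exactly one $\cv$, then $w = \dv^{i} \cv \dv^{j}$ with $2i + 2j = 2q-2$, that is $i + j = q-1$; the unique $\cv$ has even prefix degree $2i$, and $2i = q-1$ forces $i = j = (q-1)/2$, i.e.\ $w = \dv^{(q-1)/2} \cv \dv^{(q-1)/2}$. Since this monomial is excluded by hypothesis, we have $2i \neq q-1$, and factoring $w$ at its $\cv$ again yields $\mathcal{S}(w) = 0$.

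There is no serious obstacle here; essentially all of the analytic content already resides in the previous lemma, and the remaining argument is bookkeeping on the location of a suitable $\cv$. The only point needing a touch of care is verifying that $\dv^{(q-1)/2} \cv \dv^{(q-1)/2}$ really is the unique exception --- that is, that among $\cd$-monomials of degree $2q-1$ only this one fails to contain an odd-indexed $\cv$ with prefix degree different from $q-1$ --- which is exactly the ``distinct even prefixes differ by at least $2$'' observation, together with the parity of $q-1$.
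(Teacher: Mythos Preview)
Your proof is correct and takes essentially the same approach as the paper: both locate a $\cv$ in $w$ at an odd position other than position~$q$ (equivalently, with even prefix degree different from $q-1$) and then invoke the previous lemma termwise by linearity. The paper phrases the search via a pigeonhole count on odd versus even positions in the length-$(2q-1)$ word, while you index the $\cv$'s and case-split on their number, but the underlying argument is the same.
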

\begin{proof}
The monomial $w$ has $q$ odd positions
and $q-1$ even positions.
Since a $\dv$ covers both an odd position and an even
position, there will always be a $\cv$ in an odd position.
Unless $w$ is the monomial
$\dv^{(q-1)/2} \cv \dv^{(q-1)/2}$
we can find a $\cv$ in an odd position different from $q$.
By the previous lemma we know
$\mathcal{S}(u \cdot \cv \cdot v) = 0$
for all $\ab$-monomials $u$ and $v$ and
hence by linearity we conclude $\mathcal{S}(w) = 0$.
\end{proof}

\begin{lemma}
If $\rho(q) = 1/2$ then $\mathcal{S}(\dv^{(q-1)/2} \cv \dv^{(q-1)/2}) = 0$.
\end{lemma}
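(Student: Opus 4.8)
The plan is to express everything through the functional $\mathcal{L}$ and then invoke Proposition~\ref{proposition_2n_n}, exactly as in the proof of Lemma~\ref{lemma_C}; the hypothesis $\rho(q) = 1/2$ will only enter at the very end. Write $w = \dv^{(q-1)/2} \cv \dv^{(q-1)/2}$, so that the unique $\cv$ of $w$ sits in position $q$. The first step is a closed form for $\mathcal{S}$ on a single $\ab$-monomial $u = u_S$ of degree $2q-1$: Lemma~\ref{lemma_q} lets us write $\beta_{2q}(S) = \varepsilon + mp$ with $\varepsilon = (-1)^{|S - \{q\}|} \in \{-1,1\}$ and $m \in \Zzz$, whence
$$ \mathcal{S}(u_S) = \sin\bigl( (\pi/p)(\varepsilon + mp) \bigr) = (-1)^{m} \varepsilon \sin(\pi/p) = - (-1)^{\beta_{2q}(S)} \cdot (-1)^{|S - \{q\}|} \cdot \sin(\pi/p), $$
the last equality using that $p$ is odd, so $m \equiv mp = \beta_{2q}(S) - \varepsilon \equiv \beta_{2q}(S) - 1 \bmod 2$. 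This is the $\sin$-analogue of the $\cos$-identity already used for $\mathcal{C}$, the only new feature being the extra sign $(-1)^{|S-\{q\}|}$, which survives because $\sin$ is odd.

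Next I would expand $w$ into $\ab$-monomials. Each such monomial is $u_S$ with $S = S' \cup A$, where $S'$ picks one element from each of the $q-1$ two-element blocks coming from the $\dv$'s and $A \subseteq \{q\}$ records the choice in the $\cv$-slot. Two observations then collapse the sum. First, $|S'| = q-1$ is even (as $q$ is odd) and $(S' \cup A) - \{q\} = S'$ in either case, so $(-1)^{|(S' \cup A) - \{q\}|} = 1$. Second, $q$ is non-essential for $2q$ in base $2$, so $2$ divides $\binom{2q}{q}$, and Lemma~\ref{lemma_flip} gives $\beta_{2q}(S') \equiv \beta_{2q}(S' \cup \{q\}) \bmod 2$, hence $(-1)^{\beta_{2q}(S' \cup A)} = (-1)^{\beta_{2q}(S')}$ for both $A = \emptyset$ and $A = \{q\}$. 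Summing the two choices of $A$ contributes a factor $2$, so that
$$ \mathcal{S}(w) = - 2 \sin(\pi/p) \sum_{S'} (-1)^{\beta_{2q}(S')} = - 2 \sin(\pi/p) \cdot \mathcal{L}\bigl( \dv^{(q-1)/2} \av \dv^{(q-1)/2} \bigr), $$
where the sum runs over all $\ab$-monomials in the expansion of $\dv^{(q-1)/2} \av \dv^{(q-1)/2}$.

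Finally I would evaluate this last $\mathcal{L}$. One further application of Lemma~\ref{lemma_flip} at position $q$ shows $\mathcal{L}(\dv^{(q-1)/2} \av \dv^{(q-1)/2}) = \mathcal{L}(\dv^{(q-1)/2} \bv \dv^{(q-1)/2})$, so each equals half of $\mathcal{L}(\dv^{(q-1)/2} \cv \dv^{(q-1)/2})$, which by Proposition~\ref{proposition_2n_n} (taking $n = q$ and $j = q-1$ many $\dv$'s) equals $2^{q}(1 - 2\rho(q))$. Therefore $\mathcal{S}(w) = -2^{q} \sin(\pi/p) \, (1 - 2 \rho(q))$, which is $0$ because $\rho(q) = 1/2$.

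The trigonometric manipulation and the bookkeeping of positions are routine; the one point that needs care — and is really the crux — is that the extra sign $(-1)^{|S - \{q\}|}$ coming from Lemma~\ref{lemma_q} does not wreck the reduction to $\mathcal{L}$. This works precisely because the single $\cv$ of $w$ occupies position $q$: that sign is then constant across the pair $\{S', S' \cup \{q\}\}$, so, combined with the mod-$2$ invariance of $\beta_{2q}$ under flipping $q$, it merely doubles the contribution rather than cancelling it. For any $\cd$-monomial whose $\cv$ lay in an odd position other than $q$, the analogous pairing instead forces $\mathcal{S}$ to vanish by cancellation, which is exactly the situation already handled by the preceding lemmas.
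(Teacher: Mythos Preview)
Your proof is correct and follows essentially the same approach as the paper: both reduce $\mathcal{S}(w)$ to $-\sin(\pi/p)\,\mathcal{L}(w)$ by observing that every $\ab$-monomial in the expansion of $w$ has $|S-\{q\}|=q-1$ even (so $\beta_{2q}\equiv 1\bmod p$), and then invoke Proposition~\ref{proposition_2n_n}. Your splitting by $A\subseteq\{q\}$ and recombining via Lemma~\ref{lemma_flip} is an unnecessary detour---once the sign $(-1)^{|S-\{q\}|}$ is seen to equal $1$, summing directly over all $u$ in the expansion already gives $-\sin(\pi/p)\,\mathcal{L}(w)$---but it does no harm.
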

\begin{proof}
If $u$ is an $\ab$-monomial occurring in the expansion of
$w = \dv^{(q-1)/2} \cv \dv^{(q-1)/2}$
then it has $q-1$ or $q$ $\bv$'s.
In fact, it has $q-1$ $\bv$'s
in the positions different from the position $q$
since this is the position of the $\cv$ in $w$.

Lemma~\ref{lemma_q} implies that
$\beta_{2q}(u) \equiv (-1)^{q-1} \equiv 1 \bmod p$,
also using that $q$ is odd.
Hence modulo~$2p$ we have that
$\beta_{2q}(u) \equiv 1$ or $p+1 \bmod 2p$.
That is, the value of $\beta_{2q}(u)$ modulo $2p$ only depends on the
value modulo $2$. 
Hence we have the sum
\begin{align*}
\mathcal{S}(w)
& =
\sum_{u} \mathcal{S}(u) \\
& =
\sum_{u} \sin(\pi/p \cdot \beta_{2q}(u)) \\
& =
\sum_{u} -\sin(\pi/p) \cdot (-1)^{\beta_{2q}(u)} \\
& =
-\sin(\pi/p) \cdot \mathcal{L}(w) \\
& =
- \sin(\pi/p) \cdot
2^{q} \cdot (1 - 2 \cdot \rho(q))  .
\end{align*}
Since $\rho(q) = 1/2$, we obtain $\mathcal{S}(w) = 0$.
\end{proof}

\begin{proof}[Proof of Theorem~\ref{theorem_double_factor_two}]
Observe that
\begin{align*}
e^{\pi \cdot i/p} \cdot Q_{2q}^{\prime}(e^{\pi \cdot i/p})
& = 
\sum_{S} \beta_{2q}(S) \cdot e^{\beta_{2q}(S) \cdot \pi \cdot i/p} \\
& = 
\sum_{S}
   \beta_{2q}(S) \cdot (\mathcal{C}(u_{S}) + i \cdot \mathcal{S}(u_{S})) \\
& = 
(\mathcal{C} + i \cdot \mathcal{S})
\left(
\sum_{S}
   \beta_{2q}(S) \cdot u_{S}
\right) \\
& = 
(\mathcal{C} + i \cdot \mathcal{S})
(\Psi(B_{2q})),
\end{align*}
which vanishes.
Hence $e^{\pi \cdot i/p}$
is a root of $Q_{2q}^{\prime}$,
so $e^{\pi \cdot i/p}$
is a double root of $Q_{2q}$.
\end{proof}

\section{The (double) factor $\Phi_{2p}$ in $Q_{q+1}(t)$}
\label{section_q_plus_1}

Let $q = p^{r}$ be an odd prime power, that is,
$p$ is an odd prime and $r$ a positive integer.
Now we study the case of the cyclotomic factor
$\Phi_{2p}$ in $Q_{q+1}(t)$.
\begin{theorem}
If $\rho(q) = 1/2$ then
the cyclotomic polynomial $\Phi_{2p}$
divides the descent set polynomial~$Q_{q+1}(t)$.
Furthermore, if $q \equiv 3 \bmod 4$, then
$\Phi_{2p}$ is a double factor in 
$Q_{q+1}(t)$.
\label{theorem_double_factor_three}
\end{theorem}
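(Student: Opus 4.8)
The plan is to prove the two assertions separately, in both cases exploiting that $\beta_{q+1}$ is easy to evaluate modulo the odd prime $p$. Writing $q+1=p^{r}+1=1\cdot p^{r}+1$ in base $p$, Lemma~\ref{lemma_power_prime} gives
$$
  F(B_{q+1})\equiv M_{(p^{r})}\cdot M_{(1)}\equiv M_{(q+1)}+M_{(q,1)}+M_{(1,q)}\bmod p,
$$
so the only flag $f$-vector entries of $B_{q+1}$ nonzero modulo $p$ are $f_{\emptyset}=f_{\{1\}}=f_{\{q\}}=1$. Substituting into the inclusion--exclusion~\eqref{equation_inclusion-exclusion} yields (see~\eqref{equation_q_plus_1})
$$
  \beta_{q+1}(S)\equiv(-1)^{|S|}\cdot\bigl(1-|S\cap\{1,q\}|\bigr)\bmod p,
$$
that is, $\beta_{q+1}(S)\equiv 0\bmod p$ exactly when $|S\cap\{1,q\}|=1$, and $\beta_{q+1}(S)\equiv\pm(-1)^{|S|}\bmod p$ otherwise. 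I also record that, since $q+1$ is even, the only elements of $[1,q]$ essential for $q+1$ in base $p$ are $1$ and $q$, whereas every odd element of $[1,q]$ is non-essential for $q+1$ in base $2$.

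\emph{Divisibility.} Since $\rho(q)=1/2$, Theorem~\ref{theorem_binary_exp} shows $q$ has two or three $1$'s in its binary expansion, and the units bit is one of them because $q$ is odd; hence, unless $q+1$ is a power of $2$ (the small cases $q=3,7$, handled separately), $n=q+1$ itself has two or three binary $1$'s. One then checks the hypotheses of Theorem~\ref{theorem_two_digits} or Theorem~\ref{theorem_three_digits} for $n=q+1$ with $s=p$: each power of $2$ occurring in the binary expansion of $q+1$ lies in $[2,q-1]$, hence is non-essential for $q+1$ in base $p$ by the first paragraph, so $p$ divides the required binomial coefficients $\binom{q+1}{2^{a}}$ (and $\binom{q+1}{2^{b}}$, $\binom{q+1}{2^{c}}$ in the three-digit case); and a suitable witness $k$ (non-essential in both base $2$ and base $p$) can be found, since the union of the base-$2$-essential elements with $\{1,q\}$ is a set of bounded size while $[1,q]$ grows. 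Alternatively one argues straight from Lemma~\ref{factor_lemma}: $a_{j}=a_{-j}$ from such a $k$ together with Lemma~\ref{lemma_flip_sign}, and $a_{j}=a_{p-j}$ by constructing an involution $\phi(S)=S\triangle X$ whose form is dictated by $|S\cap\{1,q\}|$, exactly as in the proof of Theorem~\ref{theorem_three_digits}, with the mod-$p$ formula above playing the role of~\eqref{equation_a_new}--\eqref{equation_b_new} and Corollary~\ref{corollary_binary_exp} supplying the remaining parity data.

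\emph{The double factor when $q\equiv3\bmod4$.} Here I would follow Section~\ref{section_double_factor_Phi_2} and the proof of Theorem~\ref{theorem_double_factor_two}. Define linear functionals $\mathcal{C}$ and $\mathcal{S}$ on $\ab$-polynomials of degree $q$ by $\mathcal{C}(u_{S})=\cos(\pi\beta_{q+1}(S)/p)$ and $\mathcal{S}(u_{S})=\sin(\pi\beta_{q+1}(S)/p)$. Because $\Psi(B_{q+1})$ is a $\cd$-polynomial and
$$
  e^{\pi i/p}\cdot Q_{q+1}^{\prime}\bigl(e^{\pi i/p}\bigr)
   =(\mathcal{C}+i\cdot\mathcal{S})\bigl(\Psi(B_{q+1})\bigr),
$$
it suffices to show $\mathcal{C}(w)=\mathcal{S}(w)=0$ for every $\cd$-monomial $w$ of degree $q$. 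Writing $q+1=2m$, the assumption $q\equiv3\bmod4$ says $4\mid q+1$, so $1$ and $2$ are both non-essential for $q+1$ in base $2$; moreover doubling shifts binary digits, so the restriction of $\Delta_{q+1}$ to its even vertices is isomorphic to $\Delta_{m}$, and the argument of Proposition~\ref{proposition_2n_n} then gives that $\mathcal{L}(w)$ equals a power of $2$ times $(1-2\rho(m))$, which vanishes since $m=(q+1)/2$ again has two or three binary $1$'s. It remains to push $\mathcal{C}(w)$ and $\mathcal{S}(w)$ down to $\mathcal{L}(w)=0$: for any $\ab$-monomial $u$ the numbers $\cos(\pi\beta_{q+1}(u)/p)$ and $\sin(\pi\beta_{q+1}(u)/p)$ depend only on the pair $(\beta_{q+1}(u)\bmod p,\ \beta_{q+1}(u)\bmod 2)$, both controlled by the mod-$p$ formula and the even-vertex parity rule; pairing off $\ab$-monomials that differ in a position $k\notin\{1,q\}$ carrying a $\cv$ of $w$ (such a $k$ is non-essential in both bases, so the paired $\beta_{q+1}$ values are negatives modulo $2p$) annihilates $\mathcal{S}(w)$ and collapses $\mathcal{C}(w)$ to a multiple of $\mathcal{L}(w)$, leaving only the finitely many exceptional $\cd$-monomials whose $\cv$'s all sit among $1$, $q$, and the at most six base-$2$-essential elements. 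Those are checked directly from~\eqref{equation_q_plus_1}, and this is where $q\equiv3\bmod4$ is genuinely used, forcing the constituent $\beta_{q+1}$ values to avoid the residue-$0$ configuration that would block the cancellation.

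The step I expect to be the main obstacle is this double-factor part. The involution in the divisibility half already needs the same careful homotopy bookkeeping with $\Delta_{q+1}$ as in Theorem~\ref{theorem_three_digits}, complicated because $1$ and $q$ are essential for $q+1$ in base $2$ though non-essential in base $p$. But the real difficulty is the small family of exceptional $\cd$-monomials arising for $\mathcal{C}$ and $\mathcal{S}$: one must evaluate $\beta_{q+1}\bmod 2p$ on all their constituent $\ab$-monomials and verify the cancellation, and determining exactly how $q\equiv3\bmod4$ achieves this---and why the argument must fail for $q\equiv1\bmod4$, in line with the theorem claiming the double factor only in the former case---is the crux of the proof.
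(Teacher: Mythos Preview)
Your divisibility argument via Theorems~\ref{theorem_two_digits} and~\ref{theorem_three_digits} is a legitimate alternative to what the paper does. The paper is more economical here: once the functionals $\mathcal{C},\mathcal{S}$ are set up for the double-factor part, it simply observes that $Q_{q+1}(e^{\pi i/p})=(\mathcal{C}+i\,\mathcal{S})(\cv^{q})$ and invokes the lemmas showing $\mathcal{C}$ and $\mathcal{S}$ vanish on a $\cd$-monomial starting (and ending) with $\cv$, so no separate machinery is needed. Your route trades this for the case analysis already packaged in the earlier theorems; either is fine.

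The genuine gap is in your treatment of $\mathcal{C}$ for the double factor. Pairing $\ab$-monomials that differ in a $\cv$-position $k$ non-essential in both bases yields $\beta_{q+1}(u')\equiv-\beta_{q+1}(u)\bmod 2p$. This annihilates $\mathcal{S}$ because sine is odd, but it does \emph{nothing} for $\mathcal{C}$, since cosine is even---the two paired terms are equal, not opposite. The paper handles $\mathcal{C}$ by a different mechanism: from~\eqref{equation_q_plus_1} one has $\mathcal{C}(u)=(-1)^{\beta_{q+1}(u)}$ when $u_{1}\neq u_{q}$ and $\mathcal{C}(u)=-\cos(\pi/p)\,(-1)^{\beta_{q+1}(u)}$ when $u_{1}=u_{q}$. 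For $w$ starting with $\cv$ one then flips the first letter (position~$1$ is non-essential in base~$2$) to merge the two cases and obtain $\mathcal{C}(w)=\tfrac{1}{2}(1-\cos(\pi/p))\,\mathcal{L}(w)$. For $w$ starting with $\dv$ one must flip the first \emph{two} letters, and \emph{this} is precisely where $q\equiv3\bmod4$ enters: it forces $4\mid q+1$, making position~$2$ non-essential in base~$2$ as well.

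For $\mathcal{S}$, your idea is correct but your bookkeeping of exceptional monomials is looser than necessary. The paper restricts attention to \emph{odd} $\cv$-positions in $[3,q-2]$ (automatically non-essential in base~$2$, since $q+1$ is even), and a short counting argument then shows the only $\cd$-monomials lacking such a position are $\cv\dv^{(q-1)/2}$, $\dv^{(q-1)/2}\cv$, and $\cv\dv^{i}\cv\dv^{j}\cv$. The last shape is killed by a first/last-letter symmetry with no hypothesis at all; the first two again require flipping positions $1$, $2$ and $q$, hence once more the non-essentiality of~$2$ coming from $q\equiv3\bmod4$. So the congruence condition is not a vague obstruction on ``residue-$0$ configurations''---it is used in exactly two places, both to make position~$2$ available.
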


We start by explicitly expressing the flag $f$-vector
of the Boolean algebra $B_{q+1}$ modulo $p$:
$$  F(B_{q+1})
   \equiv (M_{(1)})^{q} \cdot M_{(1)}
   \equiv M_{(q)} \cdot M_{(1)}
   \equiv M_{(q+1)} + M_{(q,1)} + M_{(1,q)} \bmod p . $$
Hence the flag $f$-vector $f(S) \equiv 1 \bmod p$ if
$S$ is equal to $\emptyset$, $\{1\}$ or $\{q\}$,
and zero otherwise.
By inclusion-exclusion we obtain that the descent
set statistic modulo $p$ is given by
\begin{equation}
\beta_{q+1}(S)
\equiv
\begin{cases}
(-1)^{S}   & \text{ if } |S \cap \{1,q\}| = 0 , \\
0             & \text{ if } |S \cap \{1,q\}| = 1 , \\
- (-1)^{S} & \text{ if } |S \cap \{1,q\}| = 2,   \\
\end{cases}
\:\:\:\: \bmod p .
\label{equation_q_plus_1}
\end{equation}
In terms of $\ab$-monomials,
this result can be stated as
$\beta_{q+1}(\av \cdot v \cdot \bv) \equiv
\beta_{q+1}(\bv \cdot v \cdot \av) \equiv 0 \bmod p$
and 
$\beta_{q+1}(\av \cdot v \cdot \av) \equiv 
 -\beta_{q+1}(\bv \cdot v \cdot \bv) \equiv (-1)^{j} \bmod p$
where $v$ is an $\ab$-monomial of degree $q-2$
having $j$~$\bv$'s.

Similarly to the previous section, we use 
two linear functions from $\ab$-polynomials
of degree $q$ to the reals~$\Rrr$, defined by
\begin{align}
\mathcal{C}(u_{S}) & = \cos(\pi/p \cdot\beta_{q+1}(S)) , 
\label{equation_def_C_2} \\
\mathcal{S}(u_{S}) & = \sin(\pi/p \cdot\beta_{q+1}(S)) .
\label{equation_def_S_2} 
\end{align}
Note that they differ from definitions~\eqref{equation_def_C_1}
and~\eqref{equation_def_S_1} by replacing
the descent set statistic $\beta_{2q}$ by $\beta_{q+1}$.

\begin{lemma}
Let $w$ be a $\cd$-monomial of degree $q$
beginning or ending with the letter $\cv$.
If $\rho(q+1)=1/2$ then $\mathcal{C}(w) = 0$.
\label{lemma_q_C_c}
\end{lemma}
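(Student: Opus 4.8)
The plan is to split $\mathcal{C}(w)$ as a multiple of the linear functional $\mathcal{L}$ (whose value on a $\cd$-monomial is computed in Proposition~\ref{proposition_2n_n}) plus a correction term, and then to show both pieces vanish. First I would express $\mathcal{C}(u_{S})$ through the sign $(-1)^{\beta_{q+1}(S)}$. By equation~\eqref{equation_q_plus_1} the residue $\beta_{q+1}(S) \bmod p$ is $0$ when $|S \cap \{1,q\}| = 1$ and is $\pm 1$ otherwise, and since $p$ is odd this pins down $\beta_{q+1}(S) \bmod 2p$ as soon as its parity is known. A short case check then gives $\mathcal{C}(u_{S}) = -\cos(\pi/p)(-1)^{\beta_{q+1}(S)}$ whenever $|S \cap \{1,q\}|$ is even, while $\mathcal{C}(u_{S}) = (-1)^{\beta_{q+1}(S)}$ when $|S \cap \{1,q\}| = 1$; the second value exceeds the first by $\bigl(1 + \cos(\pi/p)\bigr)(-1)^{\beta_{q+1}(S)}$. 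Summing over the $\ab$-monomials $u = u_{1} u_{2} \cdots u_{q}$ occurring in $w$, and letting $A$ denote the sum of $(-1)^{\beta_{q+1}(u)}$ over those $u$ whose first and last letters $u_{1}, u_{q}$ differ, we get
\[
   \mathcal{C}(w) = -\cos(\pi/p) \cdot \mathcal{L}(w) + \bigl(1 + \cos(\pi/p)\bigr) \cdot A .
\]
So it suffices to prove $\mathcal{L}(w) = 0$ and $A = 0$.

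For $\mathcal{L}(w) = 0$: since $q$ is odd, set $n = (q+1)/2$; applying Proposition~\ref{proposition_2n_n} with $2n - 1 = q$ yields $\mathcal{L}(w) = 2^{q-j}(1 - 2\rho(n))$, where $j$ is the number of $\dv$'s in $w$. The binary expansions of $(q+1)/2$ and of $q+1$ have the same number of $1$'s, and by the result of Chebikin et al.~\cite{C_E_P_R} the proportion $\rho$ depends only on that number; hence $\rho(n) = \rho(q+1) = 1/2$ and $\mathcal{L}(w) = 0$.

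The vanishing of $A$ is where the hypothesis that $w$ begins or ends with $\cv$ is used. Because $q$ is odd, neither $1$ nor $q$ is essential for $q+1$ in base $2$, so $\{1\}$ and $\{q\}$ are not vertices of $\Delta_{q+1}$; consequently every face of $\Delta_{q+1}$ avoids $1$ and $q$, so $\Delta_{q+1}|_{S} = \Delta_{q+1}|_{S \setminus \{1,q\}}$ for every subset $S$ of $[q]$, and by Theorem~\ref{theorem_Euler} the sign $(-1)^{\beta_{q+1}(S)}$ depends only on $S \cap [2,q-1]$, that is, only on the middle letters $u_{2} u_{3} \cdots u_{q-1}$ of $u$. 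Assume $w$ ends with $\cv$ (if instead $w$ begins with $\cv$, run the same argument with ``first letter'' in place of ``last letter''), and let $\iota$ be the involution on the $\ab$-monomials occurring in $w$ that flips the last letter $u_{q}$. Then $\iota$ fixes $u_{1}$ and all the middle letters while reversing whether $u_{1} = u_{q}$, so it is a sign-preserving bijection from the monomials of $w$ with $u_{1} = u_{q}$ onto those with $u_{1} \neq u_{q}$. Therefore $A$ also equals the sum of $(-1)^{\beta_{q+1}(u)}$ over the monomials with $u_{1} = u_{q}$, so $2A$ is the sum of $(-1)^{\beta_{q+1}(u)}$ over all $\ab$-monomials occurring in $w$, which is $\mathcal{L}(w) = 0$; hence $A = 0$ and $\mathcal{C}(w) = 0$.

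The only real computation is the first step, namely the case analysis turning equation~\eqref{equation_q_plus_1} into the formula for $\mathcal{C}(u_{S})$; the rest is bookkeeping. I expect the point most easily mishandled to be the last paragraph's interplay, namely that $(-1)^{\beta_{q+1}}$ is a function of the middle letters alone while a free $\cv$ at an end of $w$ provides an involution flipping an outer letter without disturbing the middle. This is exactly the mechanism by which the hypothesis forces the correction term $A$ to be zero, and without it (for $w$ starting and ending with $\dv$) the argument breaks down, matching the expectation that such $w$ must be treated separately.
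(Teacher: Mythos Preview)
Your proof is correct and follows essentially the same route as the paper's: both compute $\mathcal{C}(u)$ according to whether $u_{1}=u_{q}$ or not using equation~\eqref{equation_q_plus_1}, then use the involution flipping an outer letter covered by the boundary $\cv$ (you flip $u_{q}$, the paper flips $u_{1}$) together with the fact that positions $1$ and $q$ are non-essential for $q+1$ in base $2$ to show the two partial sums coincide, and finally invoke Proposition~\ref{proposition_2n_n} with $n=(q+1)/2$. Your decomposition $\mathcal{C}(w) = -\cos(\pi/p)\,\mathcal{L}(w) + (1+\cos(\pi/p))\,A$ is algebraically equivalent to the paper's $\mathcal{C}(w) = \tfrac{1}{2}(1-\cos(\pi/p))\,\mathcal{L}(w)$ once one knows $2A=\mathcal{L}(w)$; you also make explicit the step $\rho((q+1)/2)=\rho(q+1)$, which the paper leaves implicit.
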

\begin{proof}
It is enough to consider the case when
$w$ begins with a $\cv$.
Let $u=u_{1} u_{2} \cdots u_{q}$
be an $\ab$-monomial in the expansion of~$w$.
If $u_{1}$ differs from $u_{q}$,
we have 
by~\eqref{equation_q_plus_1} that
$\beta_{q+1}(u) \equiv 0 \bmod p$.
Hence
$\mathcal{C}(u)
= \cos(\pi/p \cdot \beta_{q+1}(u))
= (-1)^{\beta_{q+1}(u)}$.
In the case in which the first and last letter of $u$ are the same,
we have that
$\beta_{q+1}(u) \equiv \pm 1 \bmod p$
by~\eqref{equation_q_plus_1}.
Hence $\beta_{q+1}(u)$ takes one of the four values
$\pm 1, p \pm 1$ modulo $2p$
and so
$\mathcal{C}(u) = \cos(\pi/p \cdot \beta_{q+1}(u))$
takes one of the two values
$\pm \cos(\pi/p)$.
Note that if $\beta_{q+1}(u)$ is even,
then $\beta_{q+1}(u)$ is $p \pm 1$ modulo $2p$
and hence $\mathcal{C}(u)$ is $-\cos(\pi/p)$.
Similarly, if $\beta_{q+1}(u)$ is odd we have
$\mathcal{C}(u)$ is $\cos(\pi/p)$.
To summarize these two cases when $u_{1} = u_{q}$, we have that
$\mathcal{C}(u)
= - \cos(\pi/p) \cdot (-1)^{\beta_{q+1}(u)}$.

Then we have the sum
\begin{align*}
\mathcal{C}(w)
& =
\sum_{u \: : \: u_{1} \neq u_{q}}
(-1)^{\beta_{q+1}(u)}
-
\cos(\pi/p) \cdot 
\sum_{u \: : \: u_{1} = u_{q}}
(-1)^{\beta_{q+1}(u)} , 
\end{align*}
where both sums are over all $\ab$-monomials $u$
in the expansion of $w$.
Let overline denote the involution
defined by 
$\overline{\av} = \bv$ and $\overline{\bv} = \av$.
In each of the sums, also include the term
$\overline{u_{1}} u_{2} \cdots u_{q}$.
Since $1$ is non-essential for $q+1$ in base $2$,
we have
$\beta_{q+1}(\overline{u_{1}} u_{2} \cdots u_{q}) \equiv
\beta_{q+1}(u) \bmod 2$.  Hence both sums will double
to give us
\begin{align*}
\mathcal{C}(w)
& =
\frac{1}{2} \cdot \sum_{u}
(-1)^{\beta_{q+1}(u)}
-
\cos(\pi/p) \cdot 
\frac{1}{2} \cdot \sum_{u}
(-1)^{\beta_{q+1}(u)} \\
& =
\frac{1}{2} \cdot
(1 - \cos(\pi/p)) \cdot
\mathcal{L}(w) , 
\end{align*}
where both sums are over all $u$ occurring in the expansion
of $w$.
This works since $w$ begins with the letter $\cv$.
By the assumption $\rho(q+1) = 1/2$,
this expression will vanish by
Proposition~\ref{proposition_2n_n}.
\end{proof}

\begin{lemma}
Let $w$ be a $\cd$-monomial of degree $q$
beginning or ending with the letter $\dv$.
If $\rho(q+1) = 1/2$ and $q \equiv 3 \bmod 4$
then $\mathcal{C}(w) = 0$.
\label{lemma_q_C_d}
\end{lemma}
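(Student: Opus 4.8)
The plan is to run the argument of Lemma~\ref{lemma_q_C_c} with the leading (or trailing) $\dv$ of $w$ in place of a $\cv$, the hypothesis $q \equiv 3 \bmod 4$ supplying the extra input that is needed. Since $\beta_{q+1}$ is invariant under reversing an $\ab$-monomial, so is $\mathcal{C}$, and reversal of $\zab$ fixes $\cv$ and $\dv$; hence $\mathcal{C}(w)$ is unchanged if we reverse $w$ as a $\cd$-monomial, and it suffices to treat $w = \dv \cdot w'$ with $w'$ a $\cd$-monomial of degree $q-2$ (the case in which $w$ ends with $\dv$ is the reverse of this one, or equivalently one runs the argument below with positions $q-1, q$ in place of $1, 2$). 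Every $\ab$-monomial $u = u_{1} u_{2} \cdots u_{q}$ in the expansion of $w$ then has $\{u_{1}, u_{2}\} = \{\av, \bv\}$, and interchanging $u_{1}$ with $u_{2}$ produces a second $\ab$-monomial $\overline{u}$ in the expansion of $w$.

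The role of the congruence $q \equiv 3 \bmod 4$ is to guarantee that both $1$ and $2$ are non-essential for $q+1$ in base $2$: the coefficient $\binom{q+1}{1} = q+1$ is even, while $\binom{q+1}{2} = (q+1)q/2$ is even precisely because $4$ divides $q+1$. Applying Lemma~\ref{lemma_flip} twice, with $m=2$ and $k=1$ and then $k=2$, shows that interchanging the first two letters of an $\ab$-monomial does not change the parity of $\beta_{q+1}$; that is, $\beta_{q+1}(\overline{u}) \equiv \beta_{q+1}(u) \bmod 2$.

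Now I would evaluate $\mathcal{C}(u)$ exactly as in the proof of Lemma~\ref{lemma_q_C_c}. By~\eqref{equation_q_plus_1}, if the first and last letters of $u$ differ then $\beta_{q+1}(u) \equiv 0 \bmod p$ and $\mathcal{C}(u) = (-1)^{\beta_{q+1}(u)}$, whereas if they agree then $\beta_{q+1}(u) \equiv \pm 1 \bmod p$; using that $p$ is odd, the parity of $\beta_{q+1}(u)$ then determines its value modulo $2p$, so $\mathcal{C}(u) = -\cos(\pi/p) \cdot (-1)^{\beta_{q+1}(u)}$. Summing over the expansion of $w$ and splitting according to whether $u_{1} = u_{q}$,
$$ \mathcal{C}(w) = \sum_{u \: : \: u_{1} \neq u_{q}} (-1)^{\beta_{q+1}(u)} - \cos(\pi/p) \cdot \sum_{u \: : \: u_{1} = u_{q}} (-1)^{\beta_{q+1}(u)} . $$
The involution $u \mapsto \overline{u}$ preserves $(-1)^{\beta_{q+1}(u)}$ by the previous paragraph and, since it flips $u_{1}$ while fixing $u_{q}$, it exchanges the two index sets; hence the two sums are equal and each equals $\frac{1}{2} \mathcal{L}(w)$, so $\mathcal{C}(w) = \frac{1}{2}(1 - \cos(\pi/p)) \cdot \mathcal{L}(w)$. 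Finally, $w$ has degree $q$, so Proposition~\ref{proposition_2n_n} together with $\rho(q+1) = 1/2$ gives $\mathcal{L}(w) = 0$, and therefore $\mathcal{C}(w) = 0$.

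The main obstacle is simply locating where the new hypothesis enters: Lemma~\ref{lemma_q_C_c} only ever flips position $1$, which is non-essential for $q+1$ with no assumption on $q$, whereas the leading $\dv$ here forces flipping positions $1$ and $2$ together, and $q \equiv 3 \bmod 4$ is exactly the condition making position $2$ (and, in the reversed case, position $q-1$) non-essential as well. Once this is in place, the rest is a routine reprise of the first/last-letter case analysis for $\mathcal{C}$ already carried out in Lemma~\ref{lemma_q_C_c}.
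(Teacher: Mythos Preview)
Your proof is correct and follows essentially the same approach as the paper's: reduce to $w$ beginning with $\dv$, use that $q \equiv 3 \bmod 4$ makes position $2$ (in addition to position $1$) non-essential for $q+1$ in base $2$, and then run the same first/last-letter case analysis as in Lemma~\ref{lemma_q_C_c}, but with the involution $u_{1}u_{2}u_{3}\cdots u_{q} \longmapsto \overline{u_{1}}\,\overline{u_{2}}\,u_{3}\cdots u_{q}$ (which, on the expansion of $\dv\cdot w'$, is exactly your swap of $u_{1}$ and $u_{2}$) in place of flipping $u_{1}$ alone. Your write-up is in fact slightly more explicit than the paper's, which simply says the proof is the same as that of Lemma~\ref{lemma_q_C_c} except for this change of involution.
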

\begin{proof}
Assume that $w$ begins with a $\dv$.
The proof is the same as the proof of the previous lemma,
except that $q \equiv 3 \bmod 4$
implies that $2$ is non-essential for $q+1$ in base $2$.
In the end of the proof when we extend
the two sums ranging over
$u = u_{1} u_{2} u_{3} \cdots u_{q}$,
also include the terms
$\overline{u_{1}} \overline{u_{2}} u_{3} \cdots u_{q}$.
Then the both sums become $\mathcal{L}(w)$
and the result follows.
\end{proof}

\begin{lemma}
Let $u$ and $v$ be two $\ab$-monomials such that
$\deg(u) + \deg(v) = q-1$, both
$\deg(u)$ and $\deg(v)$ are even,
and both $\deg(u)$ and $\deg(v)$ differ from zero.
Then the functional $\mathcal{S}$ applied to
$u \cdot \cv \cdot v$ is zero, that is, $\mathcal{S}(u \cdot \cv \cdot v) = 0$.
\end{lemma}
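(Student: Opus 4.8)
The plan is to imitate the argument for the analogue of this lemma proved in the previous section for $\beta_{2q}$. Write $u \cdot \cv \cdot v = u \cdot \av \cdot v + u \cdot \bv \cdot v$; by linearity of $\mathcal{S}$ it suffices to show $\mathcal{S}(u \cdot \av \cdot v) = -\mathcal{S}(u \cdot \bv \cdot v)$, which will follow once we know that flipping the letter in position $k = \deg(u)+1$ negates $\beta_{q+1}$ modulo $2p$.

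First I would locate the position $k = \deg(u)+1$ of the distinguished letter. Since $\deg(u)$ and $\deg(v)$ are even and nonzero, we have $3 \leq k \leq q-2$; in particular $1 \leq k \leq q$, so Lemma~\ref{lemma_flip} is applicable with $n = q+1$. The claim is that $k$ is non-essential for $q+1$ both in base $2$ and in base $p$. For base $2$: since $q = p^{r}$ is odd, the last binary digit of $q+1$ is $0$, whereas $k$ is odd and so has last binary digit $1$; hence $k$ is non-essential. For base $p$: the base-$p$ expansion of $q+1 = p^{r}+1$ has digit $1$ in positions $0$ and $r$ and digit $0$ everywhere else, so the only integers in $\{1, \ldots, q\}$ essential for $q+1$ in base $p$ are $k = 1$ and $k = p^{r}$; the hypothesis $\deg(u) \neq 0$ excludes $k = 1$ and the hypothesis $\deg(v) \neq 0$ excludes $k = p^{r} = q$, so $k$ is non-essential here too. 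By Lemma~\ref{lemma_binom}, both $2$ and $p$ divide $\binom{q+1}{k}$, and since $p$ is odd this yields $2p \mid \binom{q+1}{k}$.

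With this in hand, Lemma~\ref{lemma_flip} applied with $m = 2p$ to the subset of $[q]$ recorded by $u \cdot \av \cdot v$ gives $\beta_{q+1}(u \cdot \av \cdot v) \equiv -\beta_{q+1}(u \cdot \bv \cdot v) \bmod 2p$. Since $x \mapsto \sin(\pi x / p)$ is odd and $2p$-periodic, applying it to this congruence gives $\mathcal{S}(u \cdot \av \cdot v) = -\mathcal{S}(u \cdot \bv \cdot v)$, and adding the two contributions yields $\mathcal{S}(u \cdot \cv \cdot v) = 0$.

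I do not foresee a genuine obstacle; the only step deserving a moment of care is the base-$p$ assertion, which relies on the rigid digit pattern of $p^{r}+1$ together with the observation that the parity and nonvanishing conditions on $\deg(u)$ and $\deg(v)$ keep $k$ clear of the two forbidden essential values $1$ and $p^{r}$. Everything else is a verbatim transcription of the preceding lemma's argument.
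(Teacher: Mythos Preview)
Your proposal is correct and follows exactly the same route as the paper's proof: the paper simply asserts that $\deg(u)+1$ is non-essential for $q+1$ in base $2$ and in base $p$, applies Lemma~\ref{lemma_flip} with $m=2p$, and invokes the oddness of $\sin$. You have supplied the details the paper omits---namely the verification that $k=\deg(u)+1$ is odd while $q+1$ is even (base~$2$), and that the only essential elements of $q+1=p^{r}+1$ in $[1,q]$ in base~$p$ are $1$ and $p^{r}$, both excluded by the hypotheses---but the argument is otherwise identical.
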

\begin{proof}
Since $\deg(u) + 1$ is non-essential for $q+1$ both
in base $2$ and in base $p$,
we have by Lemma~\ref{lemma_flip} that
$$
\beta_{q+1}(u \cdot \av \cdot v)  \equiv
- \beta_{q+1}(u \cdot \bv \cdot v) \bmod 2p.$$
Since $\sin$ is an odd function, this identity directly implies
$\mathcal{S}(u \cdot \av \cdot v) = -\mathcal{S}(u \cdot \bv \cdot v)$.
\end{proof}

\begin{lemma}
Let $w$ be a $\cd$-monomial of degree $q$
different from the monomials
$\cv \dv^{(q-1)/2}$,
$\dv^{(q-1)/2} \cv$, and 
$\cv \dv^{i} \cv \dv^{j} \cv$
where $i+j = (q-3)/2$.
Then $\mathcal{S}(w)$ vanishes.
\label{lemma_q_S_i}
\end{lemma}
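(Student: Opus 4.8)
The plan is to reduce the claim to the preceding lemma by exhibiting, for a non-exceptional $w$, a single occurrence of the letter $\cv$ in $w$ at which the splitting $w = u \cdot \cv \cdot v$ meets the hypotheses of that lemma; since $\mathcal{S}$ is linear and a $\cd$-monomial expands into $\ab$-monomials all of the same degree, applying the preceding lemma term-by-term then yields $\mathcal{S}(w) = 0$. Write $w = \dv^{a_{0}} \cv \dv^{a_{1}} \cv \cdots \cv \dv^{a_{m}}$, where $m \geq 0$ is the number of $\cv$'s and $a_{0}, \dots, a_{m} \geq 0$. Counting degrees, $m + 2(a_{0} + \cdots + a_{m}) = q$, and since $q$ is odd this forces $m$ to be odd; in particular $m \geq 1$, so $w$ does contain a $\cv$.

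For $1 \leq k \leq m$, split $w$ at its $k$th $\cv$, writing $w = u_{k} \cdot \cv \cdot v_{k}$ with $u_{k} = \dv^{a_{0}} \cv \cdots \cv \dv^{a_{k-1}}$ and $v_{k} = \dv^{a_{k}} \cv \cdots \cv \dv^{a_{m}}$. Then $\deg(u_{k}) = 2(a_{0} + \cdots + a_{k-1}) + (k-1)$ is even exactly when $k$ is odd, and in that case $\deg(v_{k}) = (q-1) - \deg(u_{k})$ is even as well. Also $\deg(u_{k}) = 0$ only if $k = 1$ and $a_{0} = 0$, while $\deg(v_{k}) = 0$ only if $k = m$ and $a_{m} = 0$. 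So by the preceding lemma it is enough to find an odd index $k$ with $\deg(u_{k}) \neq 0$ and $\deg(v_{k}) \neq 0$.

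I would finish with a short case analysis on $m$. If $m \geq 5$, the index $k = 3$ works, since $\deg(u_{3}) \geq k - 1 = 2$ and $v_{3}$ still contains the $\cv$'s in positions $4, \dots, m$ and hence has positive degree. If $m = 3$, the odd indices are $k = 1$ and $k = 3$; the first is useless only when $a_{0} = 0$ and the second only when $a_{3} = 0$, and if both vanish then $w = \cv \dv^{a_{1}} \cv \dv^{a_{2}} \cv$ with $a_{1} + a_{2} = (q-3)/2$, an excluded monomial. If $m = 1$, the only odd index is $k = 1$, which is useless only when $a_{0} = 0$ (so $w = \cv \dv^{(q-1)/2}$) or $a_{1} = 0$ (so $w = \dv^{(q-1)/2} \cv$), again excluded. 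Hence for every $w$ other than the three listed monomials some splitting satisfies the preceding lemma, and $\mathcal{S}(w) = 0$.

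The whole argument is bookkeeping on the structure of $\cd$-words, so I do not expect a genuine obstacle. The one point to get exactly right is the parity observation that a $\cv$ has an even-degree prefix precisely when it occupies an odd position among the $\cv$'s (using that $m$ is odd), and then to confirm that the only situations in which no admissible split exists are precisely the three exceptional monomials in the statement.
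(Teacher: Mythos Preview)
Your proof is correct and follows essentially the same approach as the paper: both locate a $\cv$ in $w$ at which the prefix and suffix have even, nonzero degrees so that the preceding lemma applies. The paper phrases this in terms of odd versus even \emph{positions} in the length-$q$ word (a $\cv$ at an odd position $i$ with $2\le i\le q-1$ suffices), whereas you index the $\cv$'s themselves and make the case analysis on $m\in\{1,3,\ge 5\}$ explicit---slightly more detailed, but the same argument.
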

\begin{proof}
If the monomial $w$ has a $\cv$ in an odd position $i$, where
$2 \leq i \leq q-1$, then $\mathcal{S}(w)$ vanishes by
the previous lemma.

The monomial $w$ has $(q+1)/2$ odd positions
and $(q-1)/2$ even positions.
Since a $\dv$ covers both an odd position and an even
position, there will always be a $\cv$ in an odd position.
However, this position could be position $1$ or position $q$.
In that situation, if there is only one $\cv$ in $w$, 
then it is either the monomial $\cv \dv^{(q-1)/2}$ or $\dv^{(q-1)/2} \cv$.
If there are three $\cv$'s in $w$ then two of $\cv$'s must be
the first and last positions. That is $w$ is of the form
$\cv \dv^{i} \cv \dv^{j} \cv$. Note that the middle $\cv$ is
in an even position and the previous lemma does not help.
\end{proof}

\begin{lemma}
Let $w$ be a $\cd$-monomial
of degree $q$ beginning and ending with the letter $\cv$.
Then $\mathcal{S}(w)$ vanishes.
In particular, $\mathcal{S}(\cv \dv^{i} \cv \dv^{j} \cv)= 0$
for $i+j = (q-3)/2$.
\label{lemma_q_S_ii}
\end{lemma}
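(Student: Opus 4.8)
The plan is to reduce everything to the modular formula~\eqref{equation_q_plus_1} for $\beta_{q+1}$. Write $w = \cv \cdot w' \cdot \cv$, where $w'$ is a $\cd$-monomial of degree $q-2$. Every $\ab$-monomial in the expansion of $w$ is uniquely of the form $x \cdot v \cdot y$ with $x,y \in \{\av,\bv\}$ and $v$ an $\ab$-monomial in the expansion of $w'$, so by linearity
$$
\mathcal{S}(w)
 =
\sum_{v}
\bigl(
\mathcal{S}(\av \cdot v \cdot \av)
+ \mathcal{S}(\av \cdot v \cdot \bv)
+ \mathcal{S}(\bv \cdot v \cdot \av)
+ \mathcal{S}(\bv \cdot v \cdot \bv)
\bigr),
$$
the sum running over all $\ab$-monomials $v$ of degree $q-2$ that appear in $w'$. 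I would then show that each summand in parentheses is already zero.

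For the two ``mixed'' terms this is immediate: by~\eqref{equation_q_plus_1}, an $\ab$-monomial of degree $q$ whose first and last letters disagree has $\beta_{q+1} \equiv 0 \bmod p$, hence $\beta_{q+1}$ is an integer multiple of $p$, and therefore $\sin(\pi/p \cdot \beta_{q+1}) = 0$. Thus $\mathcal{S}(\av \cdot v \cdot \bv) = \mathcal{S}(\bv \cdot v \cdot \av) = 0$.

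For the two ``matched'' terms the idea is to prove $\beta_{q+1}(\av \cdot v \cdot \av) \equiv -\beta_{q+1}(\bv \cdot v \cdot \bv) \bmod 2p$ and then invoke that $\sin$ is odd and $2\pi$-periodic. Modulo $p$ this congruence is exactly the statement recorded after~\eqref{equation_q_plus_1}. Modulo $2$ it holds because $q = p^{r}$ is odd, so $\binom{q+1}{1} = \binom{q+1}{q} = q+1$ is even: applying Lemma~\ref{lemma_flip} with $m = 2$ first to the leading coordinate and then to the trailing coordinate gives $\beta_{q+1}(\av \cdot v \cdot \av) \equiv \beta_{q+1}(\bv \cdot v \cdot \bv) \bmod 2$, which is the same as $\equiv -\beta_{q+1}(\bv \cdot v \cdot \bv) \bmod 2$. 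Since $\gcd(2,p) = 1$, the two congruences combine by the Chinese Remainder Theorem to the desired one modulo $2p$. Hence $\mathcal{S}(\av \cdot v \cdot \av) = -\mathcal{S}(\bv \cdot v \cdot \bv)$, every bracket in the display vanishes, and $\mathcal{S}(w) = 0$. The monomial $\cv \dv^{i} \cv \dv^{j} \cv$ with $i+j = (q-3)/2$ begins and ends with $\cv$, so the ``in particular'' claim is the special case.

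I do not expect any genuinely hard step here; the only thing to be careful about is that it is a congruence modulo $2p$ (not just modulo $p$) that forces the cancellation, so one must supply the modulo $2$ input in addition to~\eqref{equation_q_plus_1}. Equivalently, the modulo $2$ statement can be read off Theorem~\ref{theorem_Euler}, since neither $1$ nor $q = p^{r}$ is essential for $q+1$ in base $2$ --- so neither is a vertex of $\Delta_{q+1}$ --- and hence $\Delta_{q+1}\vrule_{S} = \Delta_{q+1}\vrule_{S \cup \{1,q\}}$ for any $S \subseteq [2,q-1]$; the flip via Lemma~\ref{lemma_flip} is just the more elementary route to the same fact.
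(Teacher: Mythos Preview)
Your argument is correct and matches the paper's proof essentially line for line: both kill the mixed terms via $\beta_{q+1}\equiv 0\bmod p$ from~\eqref{equation_q_plus_1}, and both cancel the matched terms by combining the mod~$p$ congruence from~\eqref{equation_q_plus_1} with the mod~$2$ congruence coming from $1$ and $q$ being non-essential for $q+1$ in base~$2$ to get $\beta_{q+1}(\av v\av)\equiv -\beta_{q+1}(\bv v\bv)\bmod 2p$. Your explicit invocation of Lemma~\ref{lemma_flip} with $m=2$ and the CRT is exactly what the paper does implicitly.
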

\begin{proof}
Let $u$ be an $\ab$-monomial occurring in the expansion of $w$.
Observe that if $u$ has the form
$\av \cdot v \cdot \bv$ or
$\bv \cdot v \cdot \av$
then
$\beta_{q+1}(u) \equiv 0 \bmod p$
by~\eqref{equation_q_plus_1}.
This implies that
$\mathcal{S}(u) = \sin(\pi/p \cdot \beta_{q+1}(u)) = 0$.
Hence we have only to consider $\ab$-monomials
in the expansion of $w$ that begin and end with the same
letter.
Again by~\eqref{equation_q_plus_1} observe that
$\beta_{q+1}(\av \cdot v \cdot \av) \equiv
-\beta_{q+1}(\bv \cdot v \cdot \bv) \bmod p$.
Since positions $1$ and $q$ are non-essential for $q+1$
in base $2$, we have
$\beta_{q+1}(\av \cdot v \cdot \av) \equiv
\beta_{q+1}(\bv \cdot v \cdot \bv) \bmod 2$.
Combining these two congruences to one statement
modulo $2p$ we have
$\beta_{q+1}(\av \cdot v \cdot \av) \equiv
-\beta_{q+1}(\bv \cdot v \cdot \bv) \bmod 2p$.
This implies that
$\mathcal{S}(\av \cdot v \cdot \av) = -\mathcal{S}(\bv \cdot v \cdot \bv)$
and the statement of the lemma.
\end{proof}

\begin{lemma}
If $\rho(q+1) = 1/2$ and $q \equiv 3 \bmod 4$
then
$\mathcal{S}(\dv^{(q-1)/2} \cv) =
\mathcal{S}(\cv \dv^{(q-1)/2}) = 0$.
\label{lemma_q_S_iii}
\end{lemma}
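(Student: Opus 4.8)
The plan is to exploit that the descent set statistic $\beta_{q+1}$ is unchanged under $S \mapsto \{q+1-s : s\in S\}$, i.e.\ under reversal of $\ab$-words; hence $\mathcal{S}$ is invariant under reversal of $\cd$-monomials, and in particular $\mathcal{S}(\cv\dv^{(q-1)/2}) = \mathcal{S}(\dv^{(q-1)/2}\cv)$. So it suffices to treat $w = \dv^{(q-1)/2}\cv$. I would expand $w$ into $\ab$-monomials $u = u_{1}\cdots u_{q}$, throw away those for which $\mathcal{S}(u)$ vanishes for a trivial reason, and then show that the surviving alternating count is a positive multiple of $Q_{q+1}(-1)$, which is $0$ because $\rho(q+1) = 1/2$.

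First I would split the monomials $u$ occurring in $w$ according to whether $u_{1}=u_{q}$, writing $S$ for the set of $\bv$-positions of $u$. If $u_{1}\neq u_{q}$, then $|S\cap\{1,q\}|=1$, so $\beta_{q+1}(u)\equiv 0 \bmod p$ by~\eqref{equation_q_plus_1}; thus $\beta_{q+1}(u)$ is a multiple of $p$ and $\mathcal{S}(u)=\sin(\pi\beta_{q+1}(u)/p)=0$. If $u_{1}=u_{q}$, then since the first two letters of $u$ form a $\dv$ one has $|S|=(q-1)/2$ when $u_{1}=u_{q}=\av$ and $|S|=(q+1)/2$ when $u_{1}=u_{q}=\bv$; the hypothesis $q\equiv 3 \bmod 4$ makes these odd and even respectively, so~\eqref{equation_q_plus_1} gives $\beta_{q+1}(u)\equiv -1 \bmod p$ in both cases, and writing $\beta_{q+1}(u)=pk-1$ and using that $p$ is odd one obtains $\sin(\pi(pk-1)/p) = -(-1)^{k}\sin(\pi/p)=(-1)^{\beta_{q+1}(u)}\sin(\pi/p)$. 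Summing over all $u$ in $w$ therefore leaves
$$ \mathcal{S}(w) \;=\; \sin(\pi/p)\sum_{u \,:\, u_{1}=u_{q}} (-1)^{\beta_{q+1}(u)} . $$

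Next I would evaluate this sum using Theorem~\ref{theorem_Euler}, which yields $(-1)^{\beta_{q+1}(u)}=(-1)^{\widetilde{\chi}(\Delta_{q+1}|_{S})}$; since every face of $\Delta_{q+1}$ lies in its vertex set $V$ (the proper nonzero essential elements of $q+1$ in base $2$), this equals $(-1)^{\widetilde{\chi}(\Delta_{q+1}|_{S\cap V})}$. Here the congruence $q\equiv 3\bmod 4$ does the real work: because $4 \mid q+1$, every essential element of $q+1$ is divisible by $4$, so $V\subseteq[4,\,q-3]$, the positions $1,2,q$ lie outside $V$, and each $x\in V$ is the even (larger) position of a distinct $\dv$-block of $w$ other than the first one. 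Consequently, for either fixed choice of the common letter $u_{1}=u_{q}$, the set $S\cap V$ runs uniformly over all $W\subseteq V$ as the remaining interior $\dv$-blocks of $u$ vary, and one gets $\sum_{u:u_{1}=u_{q}}(-1)^{\beta_{q+1}(u)} = 2^{(1-q)/2}\cdot 2^{q-|V|}\cdot\sum_{W\subseteq V}(-1)^{\widetilde{\chi}(\Delta_{q+1}|_{W})}$ up to nothing — and in any case up to a positive power of $2$. Finally, grouping the subsets $S\subseteq[q]$ by $S\cap V$ and applying Theorem~\ref{theorem_Euler} once more shows $2^{q-|V|}\sum_{W\subseteq V}(-1)^{\widetilde{\chi}(\Delta_{q+1}|_{W})} = \sum_{S\subseteq[q]}(-1)^{\beta_{q+1}(S)} = Q_{q+1}(-1)$, and $Q_{q+1}(-1)=0$ since $\rho(q+1)=1/2$ means there are equally many even and odd values among the $\beta_{q+1}(S)$. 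Hence $\mathcal{S}(w)=0$.

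I expect the main obstacle to be the third step: one must check carefully that $4\mid q+1$ forces the entire vertex set of $\Delta_{q+1}$ into distinct interior $\dv$-blocks of $w$, disjoint from positions $1$, $2$ and $q$, so that the parity of $\beta_{q+1}(u)$ decouples completely from the choice of $u_{1}=u_{q}$ and is governed by a block-by-block, uniformly distributed datum. Once that bookkeeping is in place, the collapse of the sum to $Q_{q+1}(-1)$ is immediate.
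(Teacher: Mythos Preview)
Your proof is correct and follows essentially the same arc as the paper's: reduce by symmetry to $w=\dv^{(q-1)/2}\cv$, discard the monomials with $u_{1}\neq u_{q}$ via~\eqref{equation_q_plus_1}, compute $\beta_{q+1}(u)\bmod p$ on the survivors to turn $\mathcal{S}(w)$ into $\sin(\pi/p)$ times an alternating-sign sum, and then show that this sum is a nonzero multiple of $Q_{q+1}(-1)$, which vanishes since $\rho(q+1)=1/2$.

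The packaging of the last step differs. The paper uses that positions $1$, $2$, $q$ are non-essential in base~$2$ (this is where $q\equiv 3\bmod 4$ enters) to pass from the restricted sum to $\mathcal{L}(w)=\sum_{u}(-1)^{\beta_{q+1}(u)}$ and then invokes Proposition~\ref{proposition_2n_n}. You instead unwind that proposition in place: you appeal to Theorem~\ref{theorem_Euler} and the observation that $4\mid q+1$ forces the vertex set $V$ of $\Delta_{q+1}$ to consist of multiples of~$4$, hence to sit in distinct interior $\dv$-blocks disjoint from positions $1$, $2$, $q$; this lets you read the restricted sum directly as a power of~$2$ times $\sum_{W\subseteq V}(-1)^{\widetilde{\chi}(\Delta_{q+1}|_{W})}$, which you then identify with (a power of~$2$ times) $Q_{q+1}(-1)$. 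Your route is a bit more explicit about the combinatorics of $\Delta_{q+1}$ and avoids citing the $\mathcal{L}$-machinery, at the cost of the extra bookkeeping you flag in your ``main obstacle'' paragraph. A minor point: your stated constant $2^{(1-q)/2}\cdot 2^{q-|V|}$ is off by one factor of~$2$ (the correct multiplicity is $2\cdot 2^{(q-3)/2-|V|}=2^{(q-1)/2-|V|}$), but as you note, only the vanishing matters, so this is harmless.
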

\begin{proof}
The congruence relation on $q$ implies that
$4$ divides $q+1$. Hence the element $2$ is a non-essential
element for $q+1$ in base $2$. We will use this fact,
together with the facts that $1$ and $q$ are non-essential elements.

By symmetry it is enough to prove the lemma
for $w = \dv^{(q-1)/2} \cv$.
Let $u$ be an $\ab$-monomial occurring in the expansion of $w$.
If $u$ begins and ends with different letters,
similar to the previous lemma, we have that $\mathcal{S}(u) = 0$.
Hence we have that
$u$ has the form $\av\bv \cdot v \cdot \av$ or
$\bv\av \cdot v \cdot \bv$.
Next we have that
$\beta_{q+1}(\av\bv \cdot v \cdot \av)
\equiv
-\beta_{q+1}(\bv\bv \cdot v \cdot \bv)
\equiv
\beta_{q+1}(\bv\av \cdot v \cdot \bv) \bmod p$
by~\eqref{equation_q_plus_1}.
Furthermore, since the three elements
$1$,  $2$ and $q$ are non-essential for $q+1$ in base $2$,
we have that
$\beta_{q+1}(\av\bv \cdot v \cdot \av)
\equiv
\beta_{q+1}(\bv\av \cdot v \cdot \bv) \bmod 2$.
That is, modulo $2p$ we have
$\beta_{q+1}(\av\bv \cdot v \cdot \av)
\equiv
\beta_{q+1}(\bv\av \cdot v \cdot \bv) \bmod 2$p.

Hence these two cases $\av\bv \cdot v \cdot \av$
and $\bv\av \cdot v \cdot \bv$
are the same, that is,
\begin{align*}
\mathcal{S}(w)
& =
2 \cdot
\sum_{\av\bv \cdot v \cdot \av} 
\sin(\pi/p \cdot \beta_{q+1}(\av\bv \cdot v \cdot \av)) .
\end{align*}

\noindent The monomial $u = \av\bv \cdot v \cdot \av$ has
$(q-1)/2$ $\bv$'s,
so $\beta_{q+1}(u) \equiv (-1)^{(q-1)/2} \bmod p$ 
by Lemma~\ref{lemma_q}.
By considering the four values $\pm 1, p \pm 1$
of $\beta_{q+1}(u)$ modulo $2p$
we have that
$$ \sin(\pi/p \cdot \beta_{q+1}(u))
   =
     - (-1)^{(q-1)/2} \cdot \sin(\pi/p) \cdot
     (-1)^{\beta_{q+1}(u)}   . $$
Hence $\mathcal{S}(w)$ is given by
\begin{align*}
\mathcal{S}(w)
& =
-2 \cdot (-1)^{(q-1)/2} \cdot \sin(\pi/p) \cdot
\sum_{\av\bv \cdot v \cdot \av} 
     (-1)^{\beta_{q+1}(u)}   . 
\end{align*}
Again since the elements $1$, $2$ and $q$ are non-essential
for $q+1$ in base $2$, we can switch the letters in these
places without changing the descent set statistic $\beta_{q+1}$ modulo 2.
Hence we have
\begin{align*}
\mathcal{S}(w)
& =
-\frac{1}{2} \cdot (-1)^{(q-1)/2} \cdot \sin(\pi/p) \cdot
\sum_{u}
     (-1)^{\beta_{q+1}(u)}   ,
\end{align*}
where the sum is over all $\ab$-monomials $u$ in the expansion of $w$.
Now by the assumption that $\rho(q+1) = 1/2$
and Proposition~\ref{proposition_2n_n}, this
last sum is zero.
\end{proof}

\begin{proof}[Proof of Theorem~\ref{theorem_double_factor_three}]
Observe that
\begin{align*}
Q_{q+1}(e^{\pi \cdot i/p})
& = 
\sum_{u} e^{\pi \cdot i/p \cdot \beta_{q+1}(u)} \\
& = 
\sum_{u} (\cos(\pi/p \cdot \beta_{q+1}(u)) +
                 i \cdot \sin(\pi/p \cdot \beta_{q+1}(u))) \\
& =
(\mathcal{C} + i \cdot \mathcal{S})(\cv^{q}) ,
\end{align*}
since the first two sums is over all $\ab$-monomials
of degree $q$, that is, all the $\ab$-monomials in
the expansion of $\cv^{q}$. Finally, the last expression
vanishes by Lemmas~\ref{lemma_q_C_c} and~\ref{lemma_q_S_i}.

With the added assumption $q \equiv 3 \bmod 4$,
Lemmas~\ref{lemma_q_C_c}
and~\ref{lemma_q_C_d},
imply that $\mathcal{C}$
applied to any $\cd$-polynomial of degree $q$ vanishes.
Similarly, with the assumption
Lemmas~\ref{lemma_q_S_i} through~\ref{lemma_q_S_iii}
imply that $\mathcal{S}$
applied to any $\cd$-polynomial of degree $q$ vanishes.
Now we have that
\begin{align*}
e^{\pi \cdot i/p} \cdot Q_{q+1}^{\prime}(e^{\pi \cdot i/p})
& = 
\sum_{u} \beta_{q+1}(u) \cdot e^{\pi \cdot i/p \cdot \beta_{q+1}(u)} 
= 
(\mathcal{C} + i \cdot \mathcal{S})(\Psi(B_{q+1})) = 0 ,
\end{align*}
since $\Psi(B_{q+1})$ can be written in terms of the variables $\cv$ and $\dv$.
Thus
$e^{\pi \cdot i/p}$ is a double root of $Q_{q+1}(t)$.
\end{proof}

\section{Concluding remarks}

\newcommand{\bl}[1]{\mathbf{{\textcolor{blue}{#1}}}}
\begin{table}[ht!]
$$
\begin{array}{r r l}
 n & \mbox{\rm degree} & \mbox{\rm cyclotomic factors of $Q_{n}(t)$} \\ \hline
 3 &            2 & \bl{\Phi_{2}} \\
 4 &            5 & \bl{\Phi_{4}^{2}} \\
 5 &           16 & \bl{\Phi_{2}^{2}} \cdot \bl{\Phi_{10}} \\
 6 &           61 & \bl{\Phi_{2}^{2}} \cdot \bl{\Phi_{6}^{2}} \cdot \bl{\Phi_{10}} \\
 7 &          272 & \bl{\Phi_{2}} \\
 8 &         1385 & \bl{\Phi_{4}^{2}} \cdot \bl{\Phi_{28}} \\
 9 &         7936 & \bl{\Phi_{2}^{2}} \cdot \bl{\Phi_{6}} \cdot \bl{\Phi_{18}} \\
10 &        50521 & \bl{\Phi_{2}^{2}} \cdot \bl{\Phi_{6}} \cdot \bl{\Phi_{10}^{2}} \cdot \bl{\Phi_{18}} \cdot \bl{\Phi_{30}} \\
11 &       353792 & \bl{\Phi_{2}} \cdot \bl{\Phi_{6}} \cdot \bl{\Phi_{22}} \\
12 &      2702765
  & \bl{\Phi_{2}^{2}} \cdot \bl{\Phi_{6}} \cdot \bl{\Phi_{10}}
                  \cdot \bl{\Phi_{18}} \cdot \bl{\Phi_{22}^{2}}
                  \cdot \bl{\Phi_{66}} \cdot \bl{\Phi_{110}}
                  \cdot \bl{\Phi_{198}} \\
13 &     22368256
  & \bl{\Phi_{2}} \cdot \bl{\Phi_{26}} \\
14 &    1.993 \cdot 10^{8}
  & \bl{\Phi_{2}^{2}} \cdot \Phi_{4} \cdot \bl{\Phi_{14}^{2}} 
                           \cdot \bl{\Phi_{26}} \cdot \Phi_{28}
                          \cdot \bl{\Phi_{182}} \\
15 &   1.904 \cdot 10^{9}
  & - \\
16 &  1.939 \cdot 10^{10}
  &  \bl{\Phi_{4}^{2}} \cdot \bl{\Phi_{12}} \cdot
                          \bl{\Phi_{20}} \cdot \bl{\Phi_{44}} \cdot
                          \bl{\Phi_{52}} \cdot \bl{\Phi_{60}} \cdot
                          \bl{\Phi_{156}} \cdot \bl{\Phi_{220}} \cdot
                          \bl{\Phi_{260}} \cdot \bl{\Phi_{572}} 
                                                  \cdot \bl{\Phi_{2860}}       \\
17 & 2.099 \cdot 10^{11}
  &  \bl{\Phi_{2}^{2}} \cdot \bl{\Phi_{34}}
\\
18 & 2.405 \cdot 10^{12}
  &
\bl{\Phi_{2}^2} \cdot
\bl{\Phi_{6}^2} \cdot
\bl{\Phi_{18}} \cdot
\bl{\Phi_{34}} \cdot
\bl{\Phi_{102}} \cdot
\bl{\Phi_{306}} \\
19 & 2.909 \cdot 10^{13}
  &
\bl{\Phi_{2}} \cdot
 \bl{\Phi_{38}} \\
20 & 3.704 \cdot 10^{14}
  &
\bl{\Phi_{2}^2} \cdot
\bl{\Phi_{6}} \cdot
\bl{\Phi_{10}} \cdot
\bl{\Phi_{30}} \cdot
\bl{\Phi_{34}} \cdot
\bl{\Phi_{38}^{2}} \cdot
\bl{\Phi_{102}} \cdot
\bl{\Phi_{114}} \cdot
\bl{\Phi_{170}} \\
& &
\cdot
\bl{\Phi_{190}} \cdot
\bl{\Phi_{510}} \cdot
\bl{\Phi_{570}} \cdot
\bl{\Phi_{646}} \cdot
\bl{\Phi_{1938}} \cdot
\bl{\Phi_{3230}} \cdot
\bl{\Phi_{9690}} \\
21 & 4.951 \cdot 10^{15}
  &
\bl{\Phi_{2}} \cdot
\bl{\Phi_{6}} \cdot
\bl{\Phi_{14}} \cdot
\bl{\Phi_{42}} \\
22 & 6.935 \cdot 10^{16}
  &
\bl{\Phi_{2}^{2}} \cdot
\bl{\Phi_{14}} \cdot
\bl{\Phi_{22}^{2}} \cdot
\bl{\Phi_{154}} \\
23 & 1.015 \cdot 10^{18}
  &
-
\end{array}
$$
\caption{Cyclotomic factors of $Q_{n}(t)$.
This table is from Chebikin et al.~\cite{C_E_P_R},
but the explained factors have been updated.
These factors occur in {\bf {\textcolor{blue}{boldface}}}.
Furthermore the factor $\Phi_{2860}$ in $Q_{16}(t)$
has been included, which was missing in the original
table.
Note that the two factors
$\Phi_{4}$ and $\Phi_{28}$
in $Q_{14}(t)$ are still unexplained.}
\label{table_P}
\end{table}

By considering Table~\ref{table_P}
one sees that there are two
unexplained cyclotomic factors in
this table.
They are $\Phi_{4}$ and $\Phi_{28}$, both
dividing $Q_{14}(t)$.
Here it is straightforward to see 
$a_{4,1} = a_{4,3}$, that is, $Q_{14}(i)$ is a real
number. But it remains to find an argument
demonstrating that $a_{0} = a_{2}$.
Since $4$ is a square, the Chinese Remainder Theorem
cannot come to our rescue.
Note that these factors seems to isolated $n=14$ and
does not occur among other $n$ with three $1$'s in their
binary expansion.

Further consideration of Table~\ref{table_P}
shows that all cyclotomic factors that appear
in table with multiplicity 
have now been explained.
Are there other square factors appearing beyond
$n=23$ that have not yet been explained?

Do Theorems~\ref{theorem_double_factor_two}
and~\ref{theorem_double_factor_three}
apply to infinitely many prime powers?
As mentioned in the introduction,
there are only
$6$ prime powers with with two 1's in their
binary expansion.
However, there seems to be
an infinite number of primes with
three 1's in their binary expansion;
see the sequence A081091
in
The On-Line Encyclopedia of Integer Sequences.
However, this seems to be a hard number theory problem.

Chebikin et al.\ calculated the proportion for the number of
odd entries in the descent set statistics~$\beta_{n}$
for $n=1, 3, 7, 15, 31$; see Table~\ref{table_rho},
that is, for any integer with at
most five 1's in its binary expansion. 
Could the topological view of Theorem~\ref{theorem_Euler}
help for calculating the next case $n = 63$?
From this topological viewpoint, 
is there a classification of simplicial complexes $\Delta$
such that exactly half of the induced subcomplexes
$\Delta\vrule_{S}$ have an odd Euler characteristic?

In Chebikin et al.\ they also consider the 
signed descent set polynomial, that is,
$$    Q^{\pm}_{n}(t)
    =
        \sum_{S \subseteq [n]} t^{\beta^{\pm}_{n}(S)} ,  $$
where $\beta^{\pm}_{n}(S)$
is the number of signed permutations in
$\SSSS^{\pm}_{n}$ with descent set $S$.
Can any of our techniques be extended to explain
cyclotomic factors in this polynomial?
There are plenty of such factors; see Table~3
in~\cite{C_E_P_R}.

\section*{Acknowledgments}

The authors were partially supported by
National Security Agency grant~H98230-13-1-0280.

\newcommand{\journal}[6]{{\sc #1,} #2, {\it #3} {\bf #4} (#5) #6.}
\newcommand{\book}[4]{{\sc #1,} #2, #3, #4.}

\bigskip

{\em R.\ Ehrenborg, N.\ B.\ Fox.
Department of Mathematics,
University of Kentucky,
Lexington, KY 40506-0027,}
{\tt jrge@ms.uky.edu},
{\tt norman.fox@uky.edu}

\end{document}